\title[Regularity for functionals with double phase]{Regularity for general functionals\\ with double phase}
\author[Baroni]{Paolo Baroni}
\address{Paolo Baroni\\Department of mathematical, physical and computer sciences, University of Parma\\
Parco Area delle Scienze 53/a, Campus, 43124 Parma, Italy}\email{paolo.baroni@unipr.it}
\author[Colombo]{Maria Colombo}
\address {Maria Colombo\\
Institute for Theoretical Studies, ETH Z\"urich,
\\ Clausiusstrasse 47, CH-8092 Z\"urich, Switzerland
 	}
\email{maria.colombo@eth-its.ethz.ch}
\author[Mingione]{Giuseppe Mingione}
\address{Giuseppe Mingione\\Dipartimento di Matematica e Informatica, Universit\`a di Parma\\
Parco Area delle Scienze 53/a, Campus, 43124 Parma, Italy}
\email{giuseppe.mingione@unipr.it.}
\newtheorem{theorem}{Theorem}[section]
\newtheorem{prop}{Proposition}[section]
\newtheorem{lemma}{Lemma}[section]
\theoremstyle{definition}
\newtheorem{definition}{Definition}
\newtheorem{remark}{Remark}[section]
\numberwithin{equation}{section}
\def\eqn#1$$#2$${\begin{equation}\label#1#2\end{equation}}
\def\charfn_#1{{\raise1.2pt\hbox{$\chi_{\kern-1pt\lower3pt\hbox{{$\scriptstyle#1$}}}$}}}
\newcommand{\rif}[1]{(\ref{#1})}
\newcommand{\trif}[1] {\textnormal{\rif{#1}}}
\newcommand{\stackleq}[1]{\stackrel{\rif{#1}}{ \leq}}
\newcommand{\eps}{\varepsilon}
\newcommand{\ai}{a_{{\rm i}}}
 \DeclareMathOperator*{\osc}{osc}
\def\dist{\operatorname{dist}}
\newcommand{\divo}{\textnormal{div}}
\newcommand{\data}{\texttt{data}}
\newcommand{\datao}{\texttt{data}(\Omega_0)}
\def\en{\mathbb N}
\def\er{\mathbb R}
\newcommand{\ern}{\mathbb{R}^n}
\def\loc{\operatorname{loc}}
\newcommand{\ratio}{\nu, L}
\def\mean#1{\mathchoice%
          {\mathop{\kern 0.2em\vrule width 0.6em height 0.69678ex depth -0.58065ex
                  \kern -0.8em \intop}\nolimits_{\kern -0.4em#1}}%
          {\mathop{\kern 0.1em\vrule width 0.5em height 0.69678ex depth -0.60387ex
                  \kern -0.6em \intop}\nolimits_{#1}}%
          {\mathop{\kern 0.1em\vrule width 0.5em height 0.69678ex
              depth -0.60387ex
                  \kern -0.6em \intop}\nolimits_{#1}}%
          {\mathop{\kern 0.1em\vrule width 0.5em height 0.69678ex depth -0.60387ex
                  \kern -0.6em \intop}\nolimits_{#1}}}
\def\vintslides_#1{\mathchoice%
          {\mathop{\kern 0.1em\vrule width 0.5em height 0.697ex depth -0.581ex
                  \kern -0.6em \intop}\nolimits_{\kern -0.4em#1}}%
          {\mathop{\kern 0.1em\vrule width 0.3em height 0.697ex depth -0.604ex
                  \kern -0.4em \intop}\nolimits_{#1}}%
          {\mathop{\kern 0.1em\vrule width 0.3em height 0.697ex depth -0.604ex
                  \kern -0.4em \intop}\nolimits_{#1}}%
          {\mathop{\kern 0.1em\vrule width 0.3em height 0.697ex depth -0.604ex
                  \kern -0.4em \intop}\nolimits_{#1}}}
\newcommand{\aveint}[2]{\mathchoice%
          {\mathop{\kern 0.2em\vrule width 0.6em height 0.69678ex depth -0.58065ex
                  \kern -0.8em \intop}\nolimits_{\kern -0.45em#1}^{#2}}%
          {\mathop{\kern 0.1em\vrule width 0.5em height 0.69678ex depth -0.60387ex
                  \kern -0.6em \intop}\nolimits_{#1}^{#2}}%
          {\mathop{\kern 0.1em\vrule width 0.5em height 0.69678ex depth -0.60387ex
                  \kern -0.6em \intop}\nolimits_{#1}^{#2}}%
          {\mathop{\kern 0.1em\vrule width 0.5em height 0.69678ex depth -0.60387ex
                  \kern -0.6em \intop}\nolimits_{#1}^{#2}}}
\newtoks\by
\newtoks\paper
\newtoks\book
\newtoks\jour
\newtoks\yr
\newtoks\pages
\newtoks\vol
\newtoks\publ
\def\et{ \& }
\def\name[#1, #2]{#1 #2}
\def\ota{{\hbox{\bf ???}}}
\def\cLear{\by=\ota\paper=\ota\book=\ota\jour=\ota\yr=\ota
\pages=\ota\vol=\ota\publ=\ota}
\def\endpaper{\the\by, \textit{\the\paper},
{\the\jour} \textbf{\the\vol} (\the\yr), \the\pages.\cLear}
\def\endbook{\the\by, \textit{\the\book},
\the\publ, \the\yr.\cLear}
\def\endpap{\the\by, \textit{\the\paper}, \the\jour.\cLear}
\def\endproc{\the\by, \textit{\the\paper}, \the\book, \the\publ,
\the\yr, \the\pages.\cLear}
\newcommand{\dista}{\dist \,( \Omega_0, \partial\Omega)}
\begin{document}

\begin{abstract} We prove sharp regularity results for a general class of functionals of the type 
$$
w \mapsto \int F(x, w, Dw) \, dx\;,
$$
featuring non-standard growth conditions and non-uniform ellipticity properties. The model case is given by the double phase integral 
$$
w \mapsto \int b(x,w)(|Dw|^p+a(x)|Dw|^q) \, dx\;,\quad 1 <p < q\,, \quad a(x)\geq 0\;,
$$
with $0<\nu \leq b(\cdot)\leq L $. This changes its ellipticity rate according to the geometry of the level set $\{a(x)=0\}$ of the modulating coefficient $a(\cdot)$. 
We also present new methods and proofs, that are suitable to build regularity theorems for larger classes of non-autonomous functionals. Finally, we disclose some new interpolation type effects that, as we conjecture, should draw a general phenomenon in the setting of non-uniformly elliptic problems. Such effects naturally connect with the Lavrentiev phenomenon. \end{abstract}


\maketitle

\centerline {{\em To Paolo Marcellini on his 70th birthday, with admiration for}} 
 \centerline{{\em his pioneering work in the Calculus of Variations}}
\vspace{5mm}
\tableofcontents

\section{Introduction and results} 
The aim of this paper is to provide sharp and comprehensive regularity theorems for minimizers of a class of integral functionals of the Calculus of Variations exhibiting a degeneracy of double type and a strong non-uniform ellipticity. The main model case in question here is provided by the double phase functional\eqn{model}
$$
W^{1,1}(\Omega) \ni w   \mapsto\mathcal P(w, \Omega):= \int_{\Omega} (|Dw|^p+a(x)|Dw|^q) \, dx\,, $$
which is naturally defined for $w \in W^{1,1}(\Omega)$, where $\Omega \subset \er^n$ is a bounded open domain, $n \geq 2$ and it is initially assumed that $1< p \leq  q$ and $a(\cdot) \in L^{\infty}(\Omega)$. In the recent papers \cite{BCM1, CM1, CM2} the authors have provided a rather comprehensive regularity theory for local minimizers of $\mathcal P$. This culminates in establishing  the local gradient H\"older continuity of minima provided a sharp balancing condition between the closeness of $p$ and $q$ and the regularity of $a(\cdot)$ is satisfied. Notice that the H\"older gradient regularity is optimal already in the classical $p$-Laplacean case, i.e., when $a(\cdot)\equiv 0$ \cite{Uh, Ur}. More specifically, it turns out that the condition 
\eqn{bound1}
$$
a(\cdot) \in C^{0,\alpha}(\Omega)\,, \quad \alpha \in (0,1] \qquad \mbox{and}\qquad \frac qp \leq 1+\frac{\alpha}{n}
$$
is sufficient to get such maximal regularity. Actually, this has been proved in \cite{CM1} apart from the controversial borderline case $ q/p= 1+\alpha/n$, which is instead handled here for the first time; see also Remark \ref{borderline-re} below. Moreover, as proved in \cite{CM2}, assuming that minimizers are bounded allows to relax the relation between $p$ and $q$ in \rif{bound1} as follows:
\eqn{bound2}
$$
 u \in L^{\infty}(\Omega)\,, \quad  a(\cdot) \in C^{0,\alpha}(\Omega)\,, \quad \alpha \in (0,1] \qquad \mbox{and}\qquad q \leq p+\alpha\;.
$$
The counterexamples given in \cite{ELM, FMM} show that both \rif{bound1} and \rif{bound2} are sharp conditions for regularity. They do coincide for $p=n$, while \rif{bound1} naturally gets better than \rif{bound2} for $p>n$. 
Indeed, in this last case
the boundedness assumption on $u$ becomes irrelevant due to Sobolev-Morrey embedding theorem. Notice that such conditions essentially serve to contain the rate of non-uniform ellipticity of the functional $\mathcal P$, which can be measured by the distance $q-p$.  
We also remark that the first regularity results for minima of $\mathcal P$ under assumptions \rif{bound1}, namely the higher integrability and fractional differentiability of gradient, have been obtained in \cite{ELM} via the analysis of the related Lavrentiev phenomenon the functional $\mathcal P$ might exhibit. The functional appearing in \rif{model} has been first considered by Zhikov \cite{Z1, Z2, Z3} in the setting of Homogenization theory and to give new instances of the Lavrentiev phenomenon. In this respect, the main feature of the integrand 
\eqn{HH}
$$
H(x,z) := |z|^p +a(x)|z|^q=: K(x, |z|) \qquad z \in \er^n
$$
is that it changes its rate of ellipticity according to the positivity of $a(x)$. It shows $q$-growth with respect to the gradient variable $z$ on the set $\{a(x)>0\}$, and $p$-growth on $\{a(x)=0\}$. The  delicate transitions between the $p$- and the $q$-ellipticity explains the dependence of the bounds in \rif{bound1}-\rif{bound2} linking $p$ and $q$ on the H\"older exponent $\alpha$ of $a(\cdot)$. We refer to \cite{BCM2, CM1} for a more accurate description of the additional features of the integral in \rif{model} and its occurrence in applications. 

In this paper we shall deal with general functionals of the type
\eqn{funF}
$$
W^{1,1}(\Omega) \ni w   \mapsto \mathcal F(w, \Omega):=\int_{\Omega} F(x,w,Dw) \, dx \;,$$ 
where $F \colon \Omega \times \er\times \er^n\to \er$ is a Carath\'eodory integrand, initially satisfying the double-sided bound 
\eqn{growtH}
$$
\nu H(x, z) \leq F(x, v,z)  \leq LH(x, z)\;,
$$
for constants $0 < \nu \leq L$. This aims at giving an intrinsic approach to regularity of double phase functional, drawing a parallel  with the standard growth conditions of $p$-type, that is when $a(x)\equiv 0$ and therefore
$H(x, z)\equiv |z|^p$. Under growth conditions \rif{growtH} the natural notion of minimality is given as follows:
\begin{definition}\label{defimain} A function $u \in W^{1,1}_{\loc}(\Omega)$ is a local minimiser of the functional $\mathcal F$ defined in \trif{funF} if and only if $H(\cdot,Du) \in L^1(\Omega)$ and the minimality condition 
$\mathcal F(u,{\rm supp} \, (u-v)) \leq \mathcal F(v,{\rm supp} \, (u-v))$ is satisfied whenever $v \in W^{1,1}_{\rm{loc}}(\Omega)$ 
is such that ${\rm supp} \, (u-v) \subset \Omega$. In particular, a local minimizer belongs to $W^{1,p}(\Omega)$. 
\end{definition}
Integrals of the type in \rif{funF} under the growth assumptions \rif{growtH} belong to the family of functionals satisfying non-standard growth conditions of $(p,q)$-type, i.e., those whose integrands satisfy growth and coercivity conditions at different polynomial rates
\eqn{pqgen}
$$
\nu|z|^p \leq F(x,v,z) \leq L(|z|^q+1)\;, \qquad p< q\;.
$$
These functionals have been the object of intensive investigation over the last years, starting with the seminal papers of Marcellini \cite{M1, M2, M5}; see for instance \cite{BF, Breit, Choe, ELM, UU, Z2, Z3}. See also the survey \cite{Dark}. The regularity theory in the case of functionals with $(p,q)$-growth in the non-autonomous case, 
that is when the energy density $F(\cdot)$ is allowed to depend on $x$, is still a non-trivial open issue, especially when considering sharp conditions for regularity. In this respect, the growth conditions considered in \rif{growtH} represent a very significant case since the one in \rif{model} provides one of the hardest model examples of non-autonomous functionals with $(p,q)$-growth available in the literature. See for instance the lists of examples discussed in \cite{BCM2, ELM, Z1, Z2, Z3}.

In order to get higher regularity of minima, assumption \rif{growtH} are not sufficient already in the standard case $a(x)\equiv 0$. For this, after \rif{growtH}, we assume that $F(\cdot)$ is a continuous integrand, of class $C^2(\er^n\setminus\{0\})$ in the $z$-variable, and we consider the following natural assumptions:
\eqn{assF}
$$
\left\{
\begin{array}{c}
|\partial F(x,v,z)||z|+|\partial^2 F(x,v,z)||z|^2 \leq LH(x, z)\\ [8 pt]
\nu \left(|z|^{p-2}+a(x)|z|^{q-2}\right)|\xi|^2\leq  \big\langle \partial^2 F(x,v,z)\xi, \xi\big\rangle \\[8 pt]
| \partial F(x_1,v,z)-\partial F(x_2,v,z)||z|
 \leq L \omega\left(|x_1-x_2|\right)\left[H(x_1,z)+H(x_2,z)\right]\\ [4 pt]\hspace{3cm} + L|a(x_1)-a(x_2)||z|^{q}
\\[8 pt]
| F(x,v_1,z)-F(x,v_2,z)|\leq L \omega(|v_1-v_2|)H(x,z)\;.
 \end{array}\right.
 $$
These are assumed to hold whenever $x, x_1, x_2 \in \Omega$, $v, v_1, v_2 \in \er$, $z \in \er^n\setminus\{0\}$, $\xi \in \er^n$, where $0<\nu\leq  L$ are fixed constants, and
\eqn{convex-omega}
$$\omega(t):= \min\left\{t^{\beta}, 1\right\} \quad \mbox{for}\ t >0$$
is the standard $\beta$-H\"older continuous modulus of continuity for $\beta \in (0,1]$. In \rif{assF} the symbol $\partial$ stands for the partial derivative with respect to the $z$-variable. It is not difficult to see that assumptions \rif{assF} are for instance satisfied by the model functional
\eqn{model-2}
$$
W^{1,1}(\Omega) \ni w   \mapsto \int_{\Omega} b(x,w)H(x, Dw) \, dx\;, $$
where $0< \nu_1 \leq b(x, v)\leq L_1$, for some constants $\nu_1, L_1$ and for some $b(\cdot)$ H\"older continuous function. Further model cases are clearly given by integrals of the type
\eqn{model-3}
$$
w
\mapsto \int_{\Omega} \left[F_1(x, w, Dw) + a(x)F_{2}(x, w, Dw)\right] \, dx\;,
$$
where $F_1(\cdot)$ and $F_2(\cdot)$ have $p$- and $q$-growth, respectively, and satisfy conditions suited to imply H\"older continuity of the gradient of minima when considered as single integrands. See for instance \cite[Assumptions (1.1)]{KM1} or just consider \rif{assF} with $p=q$. We remark that both the functional in \rif{model-2} and the one in \rif{model-3} cannot be covered by the present literature on functionals with $(p,q)$-growth as in \rif{pqgen}. Note indeed that in both cases the functional considered is non-differentiable and therefore its treatment cannot pass through the analysis of the related Euler-Lagrange equation. 

Sometimes we shall replace \rif{convex-omega} by the weaker
\eqn{convex-omega-2}
$$
\omega\colon [0, \infty)\to [0, \infty) \ \mbox{is concave and such that $\omega(0)=0$ and $\omega(\cdot)\leq 1$}\;,
$$
thereby considering functionals as in \rif{model-2} with $b(\cdot)$ being just continuous rather then H\"older continuous. 

The first two main results of this paper, stated in the next Theorem \ref{main1} and \ref{main2}, draw a complete parallel with the theory of functionals with standard polynomial growth \cite{GGInv, Manp, Manth} i.e., when $a(\cdot)\equiv 0$; see also \cite{KMarma, KMjems} for more recent a priori estimates. 
\begin{theorem}[Maximal regularity]\label{main1} Let $ u \in W^{1,p}(\Omega)$ be a local minimiser of the functional $\mathcal F$ defined in \trif{funF}, under the assumptions \trif{growtH} and \trif{assF}-\trif{convex-omega}. Moreover, assume that either \trif{bound1} or \trif{bound2} are satisfied. Then there exists $\beta_0 \in (0,1)$, depending only on $n,p,q,\ratio, \alpha$ and $\beta$, such that $Du \in C^{0,\beta_0}_{\rm{loc}}(\Omega; \er^n)$.
\end{theorem}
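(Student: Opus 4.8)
The plan is to prove Hölder continuity of $Du$ by a perturbation/freezing argument combined with the higher-integrability and Caccioppoli-type machinery that has been developed for the double phase functional in \cite{CM1,CM2}, upgraded to deal with the $(x,v)$-dependence allowed by \rif{assF}. The first preliminary step is to establish, under \rif{bound1} or \rif{bound2}, a self-improving higher integrability estimate of Gehring type: on small balls $B_r\Subset\Omega$ one controls $\vint_{B_{r/2}} H(x,Du)^{1+\delta}\,dx$ by a power of $\vint_{B_r}H(x,Du)\,dx$ (plus, in the bounded case, a quantity depending on $\|u\|_{L^\infty}$). This is obtained from the minimality in Definition \ref{defimain} via a Caccioppoli inequality; the role of \rif{bound1}--\rif{bound2} is exactly to guarantee that $a(\cdot)$ oscillates so little on the scale $r$ that the two phases can be compared (one uses $a(x)\lesssim \osc_{B_r} a + \inf_{B_r} a$ together with the Sobolev embedding to transfer $L^p$ information to $L^q$ information where $a>0$). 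In the $C^{0,\beta_0}$ target one does not need the sharp exponents here; one needs them only so that the comparison scheme closes without loss of a power.

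The core of the argument is a decay estimate for an excess-type quantity. Fix a ball $B_\ro\Subset\Omega$, freeze the coefficients: let $v$ minimise the frozen functional with integrand $F(x_0,u_{B_\ro}, \cdot)$ — more precisely one should freeze in two stages, first $v$ minimising $\int F(x_0,(v)_{B_\ro},Dv)$ with its own boundary datum $u$, and then compare $v$ with the solution $h$ of a genuinely uniformly elliptic frozen problem on the set where $a$ is bounded below, or with a $p$-harmonic-type comparison map where $a$ essentially vanishes. Since $F(x_0,v_0,\cdot)$ satisfies the ellipticity and growth bounds in \rif{assF} with $p=q$ on the relevant scale, the classical De Giorgi–Nash–Moser plus Schauder theory (or the $C^{1,\alpha}$ theory for $p$-Laplace type operators \cite{Uh,Ur}) gives an excess-decay estimate for $h$. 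One then measures the error introduced by unfreezing: the $x$-dependence contributes $\omega(\ro)\,[\,\text{energy}\,]+\osc_{B_\ro}a\cdot\vint |Du|^q$, the $v$-dependence contributes $\omega(\osc_{B_\ro} u)\cdot[\,\text{energy}\,]$ — and here one uses that $u$ is already Hölder continuous (this itself follows from boundedness/higher integrability, or from \cite{CM1}) so that $\omega(\osc_{B_\ro}u)$ decays like a power of $\ro$ — and the comparison estimates $\vint_{B_\ro}|Du-Dv|^p\,dx$, $\vint_{B_\ro}|Dv-Dh|^p\,dx$ are bounded using minimality, monotonicity of $\partial F$, and the higher integrability from the first step to absorb the extra integrability needed when passing through Hölder's inequality with exponent $1+\delta$. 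Combining the decay for $h$ with these errors yields, for some $\sigma\in(0,1)$ and all $0<s\le \ro$,
\[
\text{E}(u,B_s)\le c\Big(\frac{s}{\ro}\Big)^{\sigma}\text{E}(u,B_\ro)+c\,\ro^{\sigma}\big(1+\text{energy terms}\big),
\]
where $\text{E}$ is the appropriate intrinsic excess (e.g. $\vint_{B_s}|V(Du)-(V(Du))_{B_s}|^2$ with the $H$-adapted vector field $V$). A standard iteration lemma then upgrades this to Morrey-type decay of the excess and, via Campanato's characterization, to $Du\in C^{0,\beta_0}_{\rm loc}$.

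The main obstacle I expect is the transition between the two phases in the comparison step: on a ball $B_\ro$ one does not know a priori whether $\inf_{B_\ro}a$ is comparable to $\osc_{B_\ro}a$ or much smaller, so the frozen problem is genuinely degenerate in an $x$-dependent way, and the comparison maps must be chosen adaptively. The device is the alternative familiar from \cite{CM1}: either $\inf_{B_\ro}a \lesssim \ro^{\alpha}$ — the ``$p$-phase'' — in which case $a(x)|Du|^q$ is an honestly small perturbation of $|Du|^p$ on $B_\ro$ thanks to the higher integrability exponent and \rif{bound1}/\rif{bound2}, and one compares with a $p$-harmonic-type map; or $\inf_{B_\ro}a \gtrsim \ro^{\alpha}$ — the ``$(p,q)$-phase'' — in which case $a(\cdot)\approx \inf_{B_\ro}a$ on a smaller ball, the functional is uniformly elliptic there (with constants degenerating in $\ro$ but in a controlled, reabsorbable way), and one runs the nondegenerate Schauder comparison. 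Reconciling the two alternatives so that the excess decay is uniform — in particular choosing the intrinsic excess functional and the exponents $\delta,\sigma,\beta_0$ so that all error terms carry a strictly positive power of $\ro/s$ in every case — is the delicate bookkeeping at the heart of the proof; the $v$-dependence in \rif{assF} adds only the extra, essentially orthogonal, requirement that $u$ be continuous with a quantitative modulus, which one secures before entering the gradient argument.
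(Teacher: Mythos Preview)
Your overall architecture is right and matches the paper's: phase separation according to whether $\inf_{B_\varrho}a\lesssim\varrho^{\alpha}$ or not, freezing of $(x,v)$, comparison with a reference minimiser of the frozen functional, Lieberman-type gradient decay for the reference, and Campanato iteration. The handling of the $v$-dependence via preliminary H\"older continuity of $u$ and the treatment of the $(p,q)$-phase by direct freezing are both as in the paper.

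The substantive difference is in the $p$-phase. You propose to compare $u$ directly with a ``$p$-harmonic-type map'', absorbing the term $a(x)|Du|^q$ as a small perturbation via higher integrability. This is the approach of \cite{CM1,CM2}; it works under strict inequality in \rif{bound1} but does \emph{not} close at the borderline $q/p=1+\alpha/n$, because controlling $\int_{B_\varrho}|Du|^q$ from $H(\cdot,Du)\in L^{1+\delta}$ leaves no decay margin there. The paper avoids this entirely: in the $p$-phase it blows up $\tilde v$ (the $v$-frozen minimiser) on $B_R$ and shows the rescaled function is \emph{approximately} $A_0$-harmonic in the sense of Lemma~\ref{p-harm}; a quantitative harmonic-type approximation lemma then produces the comparison function with a power-type error $[\texttt{o}(R)]^{s_1}$ without ever needing $Du\in L^q$. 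This is the device that delivers the borderline case and that replaces the fractional-differentiability machinery.

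A second, smaller point: the way the two phases are stitched together is more structured than a single iteration lemma. For Theorem~\ref{main2} the paper runs an exit-time argument (iterate in the $p$-phase until the first scale at which \rif{applica-real} kicks in, then iterate in the $(p,q)$-phase, which is stable under shrinking by Remark~\ref{propaga2}). For Theorem~\ref{main1} itself the paper uses a scale-dependent threshold $M=M(R)=R^{-s_m/2}$ and correspondingly $s=s_m/2$ in \rif{applica-p}; this calibration is what makes the comparison estimate \rif{comp11dd} carry a genuine positive power of $R$ with constants independent of $R$, so that \rif{diffVV} follows and the final Campanato step closes. Your statement that the $(p,q)$-phase constants ``degenerate in $\varrho$ but in a controlled, reabsorbable way'' is the right intuition, and this $M(R)$/$s$ balancing is precisely the bookkeeping that implements it.
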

The main contribution in this paper is essentially threefold. First, we extend the results of \cite{CM1, CM2} to the largest possible family of functionals exhibiting a double phase behaviour of the type in \rif{model}, that is those functionals that can be controlled by the one in \rif{model} in the sense of \rif{growtH}; we further observe that we provide a unified and more transparent approach to the two different cases \rif{bound1}-\rif{bound2}. Second, the methods we develop here can provide a guideline to face the more general problem of regularity of minima under general Orlicz-type conditions. This means dealing with general non-autonomous functionals of the type in \rif{funF} where 
\eqn{condYoung}
$$F(x, v, z)\approx \Phi(x, |z|)$$ and $\Phi(\cdot)$ is a generalized Young function in the sense specified for instance in \cite{CH, Hasto}. An example is in fact given by the function $H(x, z)= K(x, |z|)\equiv \Phi(x, |z|)$ defined in \rif{HH}. The study of such problems has gained large attention over the last years, in particular with respect to the sharp interplay between the regularity of the function $x \mapsto \Phi(x,\cdot)$ and the growth conditions of $t \mapsto \Phi(\cdot,t)$. An example of this is indeed already given in \rif{bound1}-\rif{bound2}, while further results in this direction can be for instance found in \cite{AM1, AM2, Baroni, BL, Breit, BF, BO1, BO2, BOR, CMM, OK1, OK2, PS, TU}; see also the work of Ragusa \& Tachikawa for partial regularity \cite{RT1, RT2, TU}. In particular, we point out the recent paper \cite{HHT}, where a general setting for conditions \rif{condYoung} has been considered, with some unifying approaches and assumptions fitting several different contexts, as for instance those described in \cite{BCM2, ELM, Z1, Z2, Z3}. The issue is intriguing as assumptions that are relevant for regularity largely coincide with those that are necessary to get good functional theoretic properties of the spaces in questions; see for instance \cite{BCM2, CM1, CH} for this interplay. In this paper we use an intrinsic approach, useful to deal with cases like \rif{condYoung}. An instance of this is the Morrey type decay estimate \rif{morrey} below, formulated in terms of the natural quantity $H(\cdot, Du)$ and resembling the classical one valid for $p$-harmonic functions. It requires assumptions that are weaker than those considered in Theorem \ref{main1}. 
\begin{theorem}[Intrinsic Morrey decay]\label{main2} Let $ u \in W^{1,p}(\Omega)$ be a local minimiser of the functional $\mathcal F$ defined in \trif{funF}, under the assumptions \trif{growtH}, \trif{assF} and \trif{convex-omega-2}. Moreover, assume that either \trif{bound1} or \trif{bound2} are satisfied. Then, $u \in C^{0, \theta}_{\loc}(\Omega)$ for every $\theta<1$. Finally, for every $\sigma \in (0,n)$, there exists a positive constant $c\equiv c(\datao, \sigma)$, such that the decay estimate
\eqn{morrey}
$$
\int_{B_{\varrho}} H(x,Du)\, dx \leq c\left(\frac{ \varrho}{R}\right)^{n-\sigma}\int_{B_{R}} H(x,Du) \, dx
$$
holds whenever $B_\varrho \subset B_{R}\Subset \Omega_0$ are concentric balls with $R\leq 1$. \end{theorem}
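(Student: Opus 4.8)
The plan is to obtain the decay estimate \rif{morrey} by a comparison scheme with frozen, autonomous integrands, followed by a Campanato-type iteration, and then to read the H\"older continuity of $u$ off \rif{morrey} via Morrey's growth theorem; since $\mathcal F$ need not be differentiable, every estimate is carried out within the minimality inequality of Definition \ref{defimain}.

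\textbf{Preliminaries.} First I would record the two standard ingredients. From minimality, a cut-off and hole-filling argument together with \rif{growtH} gives a Caccioppoli inequality
\[
\int_{B_{R/2}} H(x,Du)\,dx \,\leq\, c\int_{B_R} H\!\left(x,\frac{u-(u)_{B_R}}{R}\right) dx\,.
\]
Coupled with the \emph{intrinsic} Sobolev-Poincar\'e inequality for the integrand $H(\cdot,\cdot)$ --- which is exactly the place where either \rif{bound1} or \rif{bound2} is needed, and which, up to lower order terms, reads $\mean{B_R} H(x,(u-(u)_{B_R})/R)\,dx \le c\big(\mean{B_R}H(x,Du)^{d}\,dx\big)^{1/d}$ for some $d=d(\data)<1$ --- this produces a reverse H\"older inequality for $H(\cdot,Du)$, hence, by Gehring's lemma, the self-improving integrability $H(\cdot,Du)\in L^{1+\delta_0}_{\loc}(\Omega)$ for some $\delta_0=\delta_0(\data)>0$. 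A De Giorgi-type argument on the truncations of $u$, using \rif{growtH} and --- when only \rif{bound1} is assumed --- the integrability just gained, additionally yields local boundedness of $u$ together with an initial bound $\osc_{B_R}u\le cR^{\theta_0}$ for some $\theta_0>0$; this preliminary modulus of continuity is what will make the freezing of the $v$-variable effective below.

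\textbf{Comparison and iteration.} Fix concentric balls $B_\varrho\subset B_R\Subset\Omega_0$ with $R\le1$ and split according to the double phase geometry: the \emph{degenerate regime} $\sup_{B_R}a(\cdot)\le 4[a]_{0,\alpha}R^{\alpha}$, where the $a$-part of $H$ is treated as a perturbation of the $p$-energy, and the \emph{non-degenerate regime} $\sup_{B_R}a(\cdot)> 4[a]_{0,\alpha}R^{\alpha}$, where $\inf_{B_R}a(\cdot)\approx\sup_{B_R}a(\cdot)$ and $H(x,\cdot)$ is, on $B_R$, comparable to a constant-coefficient, uniformly $(p,q)$-elliptic integrand. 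In either regime I would freeze the $v$-entry of $F$ at $(u)_{B_R}$ and the $x$-entry of its $z$-gradient at the centre, and let $w$ minimise the resulting autonomous functional with $w=u$ on $\partial B_R$. Minimality of $u$ and $w$ gives $\int_{B_R}H(x,Dw)\,dx\le c\int_{B_R}H(x,Du)\,dx$, while the quantitative monotonicity \rif{assF}$_2$, the continuity estimates \rif{assF}$_3$--\rif{assF}$_4$, H\"older's inequality with the exponent $1+\delta_0$, the bound $\osc_{B_R}u\le cR^{\theta_0}$, and --- in the degenerate regime, crucially --- the relation $q/p\le 1+\alpha/n$ (resp.\ $q\le p+\alpha$), used to absorb the term $[a]_{0,\alpha}R^{\alpha}\int_{B_R}|Du|^q\,dx$, lead to
\[
\int_{B_\varrho}H(x,Du)\,dx \,\leq\, c\int_{B_\varrho}H(x,Dw)\,dx + c\,\omega_*(R)\int_{B_R}H(x,Du)\,dx\,,
\]
where $\omega_*(R)$ combines $\omega(cR^{\theta_0})$, $\omega(R)$ and $R^{\alpha}$, so that $\omega_*(R)\to0$ as $R\to0$. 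Since the frozen integrand is autonomous and satisfies --- in the non-degenerate regime --- the structure conditions of the case $p=q$ of \rif{assF} (and in the degenerate one is a perturbation of the $p$-Dirichlet energy), the autonomous regularity theory gives $Dw\in C^{0,\gamma}_{\loc}$ and hence the full decay $\int_{B_\varrho}H(x,Dw)\,dx\le c(\varrho/R)^{n}\int_{B_R}H(x,Dw)\,dx$ on $B_\varrho\subset B_R$. Combining the last two facts with $\int_{B_R}H(x,Dw)\le c\int_{B_R}H(x,Du)$ yields
\[
\int_{B_\varrho}H(x,Du)\,dx \,\leq\, c\Big[\big(\tfrac{\varrho}{R}\big)^{n}+\omega_*(R)\Big]\int_{B_R}H(x,Du)\,dx
\]
for all $\varrho\le R$. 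Given $\sigma\in(0,n)$, one then fixes $R_\sigma$ so small that $\omega_*(R_\sigma)$ lies below the threshold (depending on $\data$ and $\sigma$) of the classical iteration lemma and iterates along a geometric sequence of radii to obtain \rif{morrey} for $R\le R_\sigma$ with $c=c(\datao,\sigma)$; the range $R_\sigma\le R\le1$ is then handled by enlarging the constant. Finally, \rif{morrey} and $H(x,Du)\ge|Du|^p$ give $\int_{B_\varrho}|Du|^p\,dx\le c\varrho^{n-\sigma}$, hence $\int_{B_\varrho}|Du|\,dx\le c\varrho^{\,n-1+(1-\sigma/p)}$ by H\"older's inequality, so $u\in C^{0,1-\sigma/p}_{\loc}$ by Morrey's growth theorem, and letting $\sigma\downarrow0$ we conclude that $u\in C^{0,\theta}_{\loc}(\Omega)$ for every $\theta<1$.

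\textbf{Main difficulty.} I expect the core difficulty to be the comparison step across the two regimes: legitimising the freezing of $a(\cdot)$ and checking that every error term is genuinely of lower order in both regimes simultaneously, with the radius thresholds separating the regimes being precisely compatible with --- in fact, dictated by --- the bounds \rif{bound1}/\rif{bound2}. The non-differentiability of $\mathcal F$, which rules out the Euler-Lagrange equation and confines all estimates to the minimality inequality, together with the delicate intrinsic Sobolev-Poincar\'e inequality underpinning the higher integrability, are the further sources of technical work.
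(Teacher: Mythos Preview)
Your overall architecture --- split into two phases, compare with an autonomous problem, iterate --- matches the paper's, and the $(p,q)$-phase (your ``non-degenerate regime'') is essentially right, modulo replacing the fixed threshold $4[a]_{0,\alpha}R^\alpha$ by $M[a]_{0,\alpha}R^\alpha$ with $M$ large depending on $\sigma$, so that the freezing error $[a]_{0,\alpha}R^\alpha\int_{B_R}|Du|^q\le M^{-1}\int_{B_R}H(x,Du)$ is actually small; this is exactly how the paper's Lemma \ref{pqcomp} is used in Section \ref{morreysec}.

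The genuine gap is in the degenerate regime. You claim that the comparison error $[a]_{0,\alpha}R^{\alpha}\int_{B_R}|Du|^q\,dx$ can be ``absorbed'' using $q/p\le 1+\alpha/n$ (resp.\ $q\le p+\alpha$), but this does not go through by direct freezing. After freezing, the strong convexity estimate reduces to bounding $\int_{B_R}|a(x)-a(x_B)|\,|Du|^q\,dx\le c[a]_{0,\alpha}R^\alpha\int_{B_R}|Du|^q\,dx$, and there is no way to control $\int_{B_R}|Du|^q$ by $\int_{B_R}H(x,Du)$ with a small constant when $a(\cdot)$ is itself of size $O(R^\alpha)$. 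A H\"older estimate via $H(\cdot,Du)\in L^{1+\delta_0}$ would need $q\le p(1+\delta_0)$, which the Gehring exponent $\delta_0$ never guarantees; this is precisely why earlier works needed fractional differentiability of $Du$, and why even that fails at the borderline $q/p=1+\alpha/n$.

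The paper resolves this not by freezing but by a \emph{blow-up}: one rescales $\tilde v$ on $B_R$ to a function $v$ on $B_1$ so that the rescaled coefficient $[E(R)/R]^{q-p}a_R(\cdot)$ has H\"older seminorm bounded independently of $R$ (here \rif{bound1}/\rif{bound2} and the preliminary H\"older continuity of $u$ enter; see \rif{evident} and \rif{sob-mor}). Theorem \ref{HI} then gives higher integrability of $\bar H(\cdot,Dv)$ uniformly in $R$, which is the input needed for the quantitative harmonic approximation Lemma \ref{p-harm}. Only after this blow-up does one obtain a comparison function with error $[\texttt{o}(R)]^{s_0}\to 0$; see Lemma \ref{pcomp} and Lemma \ref{pcomp3p}. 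In short, in the $p$-phase the missing idea is the blow-up/harmonic-approximation mechanism replacing direct freezing; without it the comparison estimate you write down does not hold.
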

The meaning of $\datao$ is clarified in \rif{idatio} below, see Section \ref{notazioni} for more notation. Theorem 
\ref{main2} has been obtained for the model case \rif{model} in \cite{CM1} under assumptions \rif{bound1} (but only when $q/p<1+\alpha/n$),  
but is new already for $\mathcal P$ when conditions in \rif{bound2} are considered. Another advantage of the methods presented here is that they allow to avoid the use of fractional estimates previously employed in \cite{CM1, CM2, ELM}. The use of such methods has a few drawbacks. As an example, it does not allow to extend the interior regularity results up to the boundary under the general assumptions one may wish to consider thinking to the classical case with $p$-growth. See for instance the recent interesting paper \cite{BO1}, where, due to the use of fractional estimates, additional regularity assumptions on the boundary have to be imposed in order to extend the interior results in \cite{CM3}.

\begin{remark}\label{borderline-re} The approach proposed here also allows to deal with the delicate borderline case in \rif{bound1}, i.e., 
\eqn{casoborder}
$$\frac qp=1+\frac{\alpha}{n}\;.$$
This has been left open in several papers on non-autonomous functionals \cite{BF, BO1, CM1, EMM, OK2}. Borderline cases are hard to catch. 
For instance, in the case of general functionals of the type 
$
w   \mapsto \int_{\Omega} F(Dw) \, dx$ 
with $(p,q)$-conditions of the type in \rif{pqgen} and suitable convexity assumptions, the  bound available in the literature is
$q/p< 1 +2/n$.  
It is not clear how to deal with the related limiting case $q/p= 1 +2/n$.
\end{remark}
\begin{remark}\label{parabolic-re} Another feature of the methods introduced here is that they open the way to treat parabolic equations of the type
$$
 u_t- \divo \, \left(|Du|^{p-2}Du+a(x,t)|Du|^{q-2}Du\right)=0\;.
$$
Such equations poses non-trivial additional difficulties as they generate new {\em double phase intrinsic geometries} and new methods must be developed; see \cite{BCMpar}. 
\end{remark}
The third goal of this paper is finally to disclose a new interpolative type phenomenon. In fact, we conjecture this should be a general principle when considering functionals with $(p,q)$-growth as in \rif{pqgen} and related non-uniformly elliptic problems. For this we shall consider simpler functionals of the type
\eqn{funF2}
$$
W^{1,1}(\Omega) \ni w   \mapsto \mathcal F(w, \Omega):=\int_{\Omega} F(x,Dw) \, dx \;,$$ 
but the results are completely new already in the model case \rif{model}. 
The idea is that  assuming more regularity of $u$ allows to further relax the bound linking $p$ and $q$. This is already visible in \rif{bound2} vs \rif{bound1}. The right scale to further quantify this phenomenon is the one of H\"older continuity, as shown in the next
\begin{theorem}[New interpolative bound]\label{main3} Let $ u \in W^{1,p}(\Omega)$ be a local minimiser of the functional $\mathcal F$ defined in \trif{funF2}, under the assumptions \trif{growtH}, \trif{assF}-\trif{convex-omega} and with $a(\cdot) \in C^{0, \alpha}(\Omega)$. Moreover assume that 
\eqn{bound3}
$$
u \in C^{0,\gamma}(\Omega) \qquad \mbox{and}\qquad q< p+\frac{\alpha}{1-\gamma}\,, \quad \gamma \in (0,1)\;.
$$
Then there exists $\beta_1 \in (0,1)$, depending only on $n,p,q,\ratio, \alpha$ and $\beta$, such that $Du \in C^{0,\beta_1}_{\rm{loc}}(\Omega; \er^n)$. Moreover, weakening assumption \trif{convex-omega} by \trif{convex-omega-2}, leads to the conclusions of Theorem \ref{main2}. 
\end{theorem}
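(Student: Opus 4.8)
The plan is to reduce Theorem \ref{main3} to (the proof of) Theorem \ref{main1}–\ref{main2} by showing that the extra Hölder regularity $u\in C^{0,\gamma}$, combined with $q<p+\alpha/(1-\gamma)$, buys exactly the same kind of control on the coefficient $a(\cdot)$ along the solution as the condition \rif{bound2} does along bounded solutions, only on a suitably rescaled (intrinsic) scale. Concretely, I would revisit the key comparison/frozen-coefficient estimate that underlies the arguments for \rif{bound1}-\rif{bound2}: on a ball $B_\varrho$ one freezes $a(\cdot)$ at, say, $a_{\mathrm i}:=\inf_{B_\varrho}a$, compares $u$ with the minimiser $v$ of the frozen functional, and must absorb the error term $\int_{B_\varrho}|a(x)-a_{\mathrm i}|\,|Du|^q\,dx$. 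Using $a\in C^{0,\alpha}$ one has $|a(x)-a_{\mathrm i}|\les [a]_{0,\alpha}\varrho^\alpha$, so the error is controlled by $[a]_{0,\alpha}\varrho^\alpha\int_{B_\varrho}|Du|^q$. The whole scheme works provided this can be reabsorbed into $\int_{B_\varrho}|Du|^p$ up to a higher-integrability gain; in \cite{CM2} this reabsorption is powered by the $L^\infty$ bound, which via a Gehring/Caccioppoli argument on the intrinsic cylinders gives $\int_{B_\varrho}|Du|^q\les \varrho^{n-q}(\text{something})$ with a favourable power. Here I replace that mechanism by the oscillation bound $\osc_{B_\varrho}u\les [u]_{0,\gamma}\varrho^\gamma$.

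The key step, then, is an \emph{intrinsic Caccioppoli inequality with the $C^{0,\gamma}$-scaling built in}: testing the minimality of $u$ against $v=u-\eta^q(u-(u)_{B_\varrho})$ (truncating as needed, since $F$ need not be differentiable — this is exactly the variational, non-Euler–Lagrange route the paper insists on, cf.\ the discussion after \rif{model-3}) yields
\[
\mean{B_{\varrho/2}} H(x,Du)\,dx \;\les\; \mean{B_\varrho} H\Bigl(x,\frac{u-(u)_{B_\varrho}}{\varrho}\Bigr)\,dx \;\les\; \frac{[u]_{0,\gamma}^p}{\varrho^{p(1-\gamma)}}\;+\;[a]_{0,\alpha}\,\frac{[u]_{0,\gamma}^q}{\varrho^{q(1-\gamma)-\alpha}}\,.
\]
The point is that $q(1-\gamma)-\alpha < p(1-\gamma)$ is precisely the hypothesis $q<p+\alpha/(1-\gamma)$ in \rif{bound3}; hence the second term is, after Gehring-type self-improvement of the exponent, of lower order than (a slightly improved version of) the first, and one recovers $H(\cdot,Du)\in L^{1+\delta}_{\loc}$ together with a reverse-Hölder inequality. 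From that point the argument is structurally identical to the one for Theorems \ref{main1}–\ref{main2}: an excess-decay / $\mathfrak{p}$-harmonic approximation step on intrinsic balls, alternating the ``$p$-phase'' ($a(x)\approx 0$) and ``$(p,q)$-phase'' ($a(x)\gtrsim$ scale) analyses, then Campanato-type iteration to upgrade to $Du\in C^{0,\beta_1}_{\loc}$; replacing \rif{convex-omega} by \rif{convex-omega-2} only affects the final modulus, giving the weaker $C^{0,\theta}$ / Morrey conclusion exactly as in Theorem \ref{main2}.

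I expect the main obstacle to be making the \emph{intrinsic geometry} compatible with the fixed exponent $\gamma$: the radius $\varrho$ at which one freezes $a(\cdot)$ must be coupled to the level of $a$ and to the size of $\mean{B_\varrho}H(x,Du)$, and one has to check that along the iteration the two competing scalings — $\varrho^{-p(1-\gamma)}$ from the $p$-growth part and $\varrho^{\alpha-q(1-\gamma)}$ from the $a(\cdot)$ part — stay ordered under dyadic shrinking, so that the error term genuinely remains subcritical and does not blow up when the intrinsic scale degenerates. A secondary technical point is the non-differentiability of $F$: all comparison estimates must be carried out variationally (via minimality against explicit competitors and convexity/monotonicity of $z\mapsto F(x,z)$ supplied by \rif{assF}$_2$), never by differentiating the Euler–Lagrange system; this is already the philosophy of the paper, so the adaptation should be routine but must be stated carefully. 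Once the reverse-Hölder inequality above is in hand, the remainder runs in parallel with the proofs of Theorems \ref{main1} and \ref{main2}, with $\alpha$ there systematically replaced by the effective gain $\alpha-(q-p)(1-\gamma)>0$.
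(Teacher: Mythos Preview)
Your proposal correctly identifies that the assumption $u\in C^{0,\gamma}$ together with $q<p+\alpha/(1-\gamma)$ yields the higher integrability $H(\cdot,Du)\in L^{1+\delta}_{\loc}$ (this is indeed Theorem \ref{HI} under \rif{bound3}, and your Caccioppoli computation is exactly how the paper proves it). But the sentence ``from that point the argument is structurally identical to the one for Theorems \ref{main1}--\ref{main2}'' hides the real obstacle, and the ``main obstacle'' you flag is not the right one.

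The proof of Theorems \ref{main1}--\ref{main2} proceeds by \emph{blowing up} $u$ on $B_R$ to $v(x)=[u(x_0+Rx)-(u)_{B_{R/2}}]/E(R)$ with $E(R)=E(u;B_{R/2})$, and then applying Theorem \ref{HI} to the rescaled minimiser $v$ in order to feed the quantitative harmonic approximation Lemma \ref{p-harm}. Under \rif{bound3}, applying Theorem \ref{HI} to $v$ requires a bound on $[v]_{0,\gamma;B_{1/4}}$ that is \emph{uniform in $R$}. But
\[
[v]_{0,\gamma;B_{1/4}}=\frac{R^\gamma\,[u]_{0,\gamma;B_{R/4}}}{E(R)}\;,
\]
and the assumption $u\in C^{0,\gamma}$ only gives $E(R)\les R^\gamma$, not the reverse inequality: the assumed H\"older regularity is \emph{not quantitatively preserved under blow-up}. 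So Lemma \ref{p-harm} is unavailable and the machinery for Theorem \ref{main1} does not start. This is not an ``intrinsic-geometry ordering'' issue; it is a scaling mismatch between the excess $E(R)$ and the oscillation $R^\gamma$.

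The paper's fix is a genuine two-stage argument that your proposal does not contain. First one runs the $p$-phase/$(p,q)$-phase scheme using only the \emph{non-quantitative} harmonic approximation Lemma \ref{p-harm2} (which needs no higher integrability of $v$); this already yields the Morrey decay \rif{morrey}, hence $u\in C^{0,\theta}_{\loc}$ for every $\theta<1$, and in particular the ``rigid'' estimate
\[
R^\gamma\,[u]_{0,\gamma;B_{R/4}}\leq c\,E(R)
\]
(see \rif{stimablow}), which is precisely the missing reverse bound. Only then, in a second blow-up, is $[v]_{0,\gamma}$ uniformly bounded, Theorem \ref{HI} applies to $v$, the quantitative Lemma \ref{p-harm} becomes available, and the gradient H\"older continuity follows as in Section \ref{gradsec}. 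Your outline jumps from higher integrability of $u$ to the decay scheme without supplying this bridge.
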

The bound in \rif{bound3} exhibits the correct asymptotic. It formally reduces to \rif{bound2} when $\gamma\to 0$, while says that no bound on $q-p$ is needed when $\gamma\to 1$. Indeed, in this case we approach the Lipschitz continuity of $u$, which in fact makes the functional uniformly elliptic at infinity. As expected, the bound in \rif{bound3} improves the one in \rif{bound1} only when $p <  n/(1-\gamma)$, that is, only when the exponent $\gamma$ is better than the one naturally given by Sobolev-Morrey embedding theorem, since $
p > n/(1-\gamma)$ implies that  $u \in C^{0, \gamma}_{\loc}(\Omega)$. Only in this case $u \in C^{0, \gamma}(\Omega)$ becomes a real assumption and gives therefore an improvement.

Finally, the assumptions of Theorem \ref{main3} also allow to give new instances of absence of Lavrentiev phenomenon \cite{Lav, Z1, Z2, Z3}. According to the classical definition specialized to the present context, the Lavrentiev phenomenon for the functional $\mathcal F$ in \trif{funF} under assumptions \rif{growtH} occurs when 
\eqn{pheno}
$$
\inf_{w \in  u_0+W^{1,p}_0(B)}\, F(w, B ) <  \inf_{w \in  u_0+W^{1,p}_0(B) \cap W^{1,q}_{\loc}(B)}\, F(w, B)\;,
$$
where $B \Subset \Omega$ is a ball and $u_0 \in W^{1,\infty}(B)$. Considering the model functional $\mathcal P$ in \rif{model},  as shown in \cite{ELM}, it can be proved that \rif{pheno} actually occurs when \rif{bound1}-\rif{bound2} fail, 
and this eventually allows to give examples of irregular minima; see also \cite{FMM}. In \cite{ELM}  
this approach is reversed and the absence of Lavrentiev phenomenon - i.e., the non-occurrence of \rif{pheno} -
is used to prove regularity of minima. In particular, in \cite{ELM} the absence of Lavrentiev phenomenon for general functionals as in \rif{funF} is proved under assumptions \rif{bound1} and \rif{growtH}. 
When instead looking at  \rif{bound2}, the absence of Lavrentiev phenomenon for the model case $\mathcal P$ has been obtained in \cite{CM2} as a consequence of the available higher regularity of minima (not available for functionals as in \rif{funF} under conditions \rif{growtH}). Next theorem features a more general result, under less general assumptions already in the case \rif{bound2} is considered. Indeed, it only assumes \rif{growtH}. 
\begin{theorem}[Absence of Lavrentiev phenomenon]\label{main4} Let $ u \in W^{1,p}(\Omega)$ be a local minimiser of 
the functional $\mathcal F$ defined in \trif{funF}, under assumptions \trif{growtH} and with $a(\cdot) \in C^{0, \alpha}(\Omega)$. Assume that either \trif{bound2} or \eqn{bound33}
$$
u \in C^{0,\gamma}(\Omega) \qquad \mbox{and}\qquad q\leq  p+\frac{\alpha}{1-\gamma}\,, \qquad \gamma \in (0,1)
$$
holds. Then, for every ball
 $B\Subset  \Omega$, there exists a sequence $\{u_k\} $ of $W^{1,\infty}(B)$-regular functions such that $u_k \to u$ strongly in 
 $W^{1,p}(B)$ and such that 
 \eqn{convergenza1}
 $$
 \lim_{k} \, \mathcal F (u_k, B) =\mathcal F (u, B)\;.
 $$
\end{theorem}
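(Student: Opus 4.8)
\smallskip
\noindent\emph{Proof plan.} The plan is to approximate $u$ by mollification, reduce the statement to an $L^1$‑convergence for the intrinsic energy density $H(\cdot,Du_k)$, and absorb the one dangerous term via the a priori gradient integrability supplied by \trif{bound2}, respectively \trif{bound33}; this is the mollification scheme used in \cite{ELM} to rule out the Lavrentiev phenomenon under \trif{bound1}, the novelty being that in the sharper regimes the error term can no longer be controlled by Young's convolution inequality alone. Fix a ball $B\Subset\Omega$ and an intermediate ball $B\Subset B_1\Subset\Omega$, let $(\phi_\eps)_{\eps>0}$ be standard mollifiers and set $u_k:=u\ast\phi_{1/k}$. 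For $k$ large $u_k$ is smooth on $B$, hence $u_k\in W^{1,\infty}(B)$; since $u\in W^{1,p}(B_1)$ we have $u_k\to u$ in $W^{1,p}(B)$ and, along a non‑relabelled subsequence, $u_k\to u$, $Du_k\to Du$ a.e.\ in $B$. As $F(\cdot)$ is Carath\'eodory and $|F(x,u_k,Du_k)|\le L\,H(x,Du_k)$ by \trif{growtH}, one has $F(x,u_k,Du_k)\to F(x,u,Du)$ a.e., so it suffices to prove that $H(\cdot,Du_k)\to H(\cdot,Du)$ in $L^1(B)$: the dominated convergence theorem, in its version with dominating functions converging in $L^1$, then gives $\mathcal F(u_k,B)\to\mathcal F(u,B)$, i.e.\ \trif{convergenza1}. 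Since $Du_k\to Du$ in $L^p(B)$ gives $|Du_k|^p\to|Du|^p$ in $L^1(B)$, everything reduces to showing that $a(\cdot)|Du_k|^q\to a(\cdot)|Du|^q$ in $L^1(B)$.

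The pointwise estimate behind this uses $a(\cdot)\in C^{0,\alpha}(\Omega)$: if $|x-y|\le 1/k$ then $a(x)\le a(y)+[a]_{0,\alpha}k^{-\alpha}$, hence $a(x)^{1/q}\le a(y)^{1/q}+c\,k^{-\alpha/q}$ by subadditivity of $t\mapsto t^{1/q}$, so that, multiplying by $|Du(y)|$ and averaging against $\phi_{1/k}(x-\cdot)$,
\begin{equation*}
a(x)^{1/q}|Du_k(x)|\le(g\ast\phi_{1/k})(x)+c\,k^{-\alpha/q}\,(|Du|\ast\phi_{1/k})(x),\qquad g:=a^{1/q}|Du|,
\end{equation*}
for $x\in B$ and $k$ large, where $g\in L^q(B_1)$ since $g^q=a|Du|^q\le H(\cdot,Du)\in L^1(\Omega)$. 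Raising to the power $q$, using $(A+B)^q\le(1+\delta)A^q+C_\delta B^q$ and Jensen's inequality on the first term ($(g\ast\phi_{1/k})^q\le g^q\ast\phi_{1/k}=(a|Du|^q)\ast\phi_{1/k}$), gives, for every fixed $\delta\in(0,1)$ and $k$ large,
\begin{equation}\label{key-moll}
a(x)|Du_k(x)|^q\le(1+\delta)\,\bigl(a|Du|^q\bigr)\ast\phi_{1/k}(x)+C_\delta\,k^{-\alpha}\,\bigl(|Du|\ast\phi_{1/k}(x)\bigr)^{q}\qquad\text{on }B.
\end{equation}
The first term is a mollification of the fixed function $a|Du|^q\in L^1(B_1)$, so $\int_B(a|Du|^q)\ast\phi_{1/k}\,dx\to\int_B a|Du|^q\,dx$ by absolute continuity of the integral; the whole question is thus the behaviour of the \emph{error term} $k^{-\alpha}(|Du|\ast\phi_{1/k})^q$.

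This is exactly where \trif{bound2}, respectively \trif{bound33}, intervenes, through the higher integrability $Du\in L^q_{\loc}(\Omega)$: under \trif{bound2} this is the gradient integrability of bounded local minimisers, valid under \trif{growtH} (see \cite{CM2}, where the balance $q\le p+\alpha$ is precisely what makes the underlying iteration close), while under \trif{bound33} the same conclusion follows analogously, with $u\in C^{0,\gamma}(\Omega)$ and $q\le p+\alpha/(1-\gamma)$ replacing boundedness and $q\le p+\alpha$ — morally the integrability counterpart of Theorem \ref{main3}, and needing only \trif{growtH} and the Caccioppoli inequality for $\mathcal F$, not \trif{assF} nor the Euler--Lagrange system. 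Granting $Du\in L^q_{\loc}(\Omega)$, Young's convolution inequality gives $\|\,|Du|\ast\phi_{1/k}\,\|_{L^q(B)}\le\|Du\|_{L^q(B_1)}$ for $k$ large, whence $k^{-\alpha}\int_B(|Du|\ast\phi_{1/k})^q\,dx\le k^{-\alpha}\|Du\|_{L^q(B_1)}^q\to0$. Integrating \eqref{key-moll} over $B$ and letting $k\to\infty$ yields $\limsup_k\int_B a|Du_k|^q\,dx\le(1+\delta)\int_B a|Du|^q\,dx$ for all $\delta\in(0,1)$, hence $\le\int_B a|Du|^q\,dx$; with Fatou's lemma (which gives the opposite inequality for $\liminf$, using $Du_k\to Du$ a.e.) this proves $\int_B a|Du_k|^q\,dx\to\int_B a|Du|^q\,dx$, and since $a|Du_k|^q\to a|Du|^q$ a.e.\ with non‑negative terms, Riesz's lemma gives $a|Du_k|^q\to a|Du|^q$ in $L^1(B)$. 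The argument covers as well the borderline cases $q=p+\alpha$ and $q=p+\alpha/(1-\gamma)$, since there too $Du\in L^q_{\loc}$ is available and makes $k^{-\alpha}\|Du\|_{L^q(B_1)}^q\to0$ regardless.

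I expect the genuine difficulty to lie entirely in the input that kills the error term, namely securing the higher integrability $Du\in L^q_{\loc}(\Omega)$ under the \emph{sharp} exponent relations of \trif{bound2} and \trif{bound33} with only \trif{growtH} at one's disposal: the cheap Morrey‑type bound $\int_{B_r}|Du|^p\le c\,r^{\,n-q(1-\gamma)}$ coming from Caccioppoli and $u\in C^{0,\gamma}$ is by itself too weak, the $q$‑phase contribution being too large, so one really needs the self‑improving iteration producing $L^q$‑integrability, and it is the precise exponent bookkeeping there that consumes the hypotheses. The mollification and the reductions above are, by comparison, routine.
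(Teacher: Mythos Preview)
Your reduction to $L^1$-convergence of $a(\cdot)|Du_k|^q$ and the pointwise splitting in \eqref{key-moll} are correct and standard, but the proof has a genuine gap at the point where you invoke $Du\in L^q_{\loc}(\Omega)$ to kill the error term $k^{-\alpha}\bigl(|Du|\ast\phi_{1/k}\bigr)^q$. This higher integrability is \emph{not} available under the hypotheses of the theorem, which assume only the growth condition \trif{growtH} and no structure condition \trif{assF}. The results in \cite{CM2} that you cite yield $Du\in L^q_{\loc}$ for the model functional $\mathcal P$ via fractional estimates that use the Euler--Lagrange equation; for a general non-differentiable $\mathcal F$ satisfying only \trif{growtH} there is no Euler--Lagrange equation and no such integrability result --- the paper itself notes this explicitly in the introduction. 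Under \trif{growtH} alone one has at best $H(\cdot,Du)\in L^{1+\delta}_{\loc}$ for some small $\delta>0$ (Theorem \ref{HI}), which gives $Du\in L^{p(1+\delta)}_{\loc}$ but not $L^q_{\loc}$ in general. Your own closing paragraph flags exactly this difficulty and leaves it unresolved.

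The paper's proof avoids the need for $Du\in L^q_{\loc}$ entirely by a phase decomposition at each point $x\in B$. In the $(p,q)$-phase, where $a_{\rm i}(B_{2\eps}(x))>2[a]_{0,\alpha}\eps^\alpha$, one has $a(y)\approx a_{\rm i}(B_{2\eps}(x))$ uniformly on $B_{2\eps}(x)$, so Jensen's inequality directly gives $H(x,Du_\eps(x))\le c\,[H(\cdot,Du)\ast\rho_\eps](x)$. In the $p$-phase, where $\sup_{B_{2\eps}(x)}a\le c\,\eps^\alpha$, the Caccioppoli inequality (which needs only \trif{growtH}) converts $\mean{B_\eps(x)}|Du|^p$ into $\bigl(\eps^{-1}\osc_{B_{2\eps}(x)}u\bigr)^p$, and the assumed $C^{0,\gamma}$ (respectively $L^\infty$, with $\gamma=0$) regularity of $u$ bounds this by $c\,\eps^{p(\gamma-1)}$; hence $|Du_\eps(x)|\le c\,\eps^{\gamma-1}$ pointwise. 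The dangerous factor then obeys
\[
a(x)|Du_\eps(x)|^{q-p}\le c\,\eps^{\alpha+(q-p)(\gamma-1)}\le c
\]
precisely under the sharp balance $q\le p+\alpha/(1-\gamma)$, and one again lands on the domination $H(x,Du_\eps(x))\le c\,[H(\cdot,Du)\ast\rho_\eps](x)$. This pointwise majorant converges in $L^1$ because $H(\cdot,Du)\in L^1$ is given by local minimality, and the rest is dominated convergence. The mechanism is thus a pointwise Lipschitz bound on the mollification coming from Caccioppoli plus oscillation control, not global $L^q$-integrability of the gradient.
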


We finally conclude giving an outline of the proofs, starting with the one of Theorem \ref{main1}. As already mentioned, a main new fact, allowing to catch the borderline case \rif{casoborder}, is that we are avoiding the use of fractional estimates and actually of any differentiation of the Euler-Lagrange equation of the functional \rif{funF} (fractional or standard). These are 
very common and heavy tools in the setting of functionals with non-standard growth conditions (see for instance \cite{Breit, BF, BO1, CM1, CM2, EMM, ELM}), that in fact do not allow to reach optimal assumptions in several situations. We instead employ a suitable blow-up argument relying on a quantitative version of a certain nonlinear harmonic type approximation lemma (Lemma \ref{p-harm} below) that, in the original linear version, goes back to the classical work of De Giorgi \cite{DG}; see \cite{DM} for an overview. This allows a more efficient separation of phases, following the terminology introduced in \cite{CM1, CM2}. Specifically, on smaller and smaller scales/balls $B_{R}$, we shall distinguish between the $p$-phase, where an inequality of the type $a(x)\lesssim [a]_{0, \alpha}R^\alpha$ holds, and the $(p,q)$-phase, that is when  
$ [a]_{0, \alpha}R^\alpha \ll a(x)$ instead occurs; see Section \ref{lefasi}. In the $p$-phase 
we blow-up the original minimizer $u$ from Theorem \ref{main1} in $B_R$, and see that, in a sense, it behaves as a minimizer of a functional with standard $p$-growth. From this we infer certain regularity estimates for $u$ in $B_R$. This is done in 
Section \ref{fasep}. In the $(p,q)$-phase, instead, we use a direct freezing argument and see that $u$ behaves in $B_R$ as a minimizer of an anisotropic functional whose growth is controlled by $w \mapsto  \int_{B_{R}} [|Dw|^p+a_0|Dw|^q]\, dx$, for some constant $a_0>0$. From this we again infer some regularity estimates in $B_R$; see Section \ref{fasepq}. It is important to see that, in order to accelerate the blow-up, 
and especially to catch the borderline case \rif{casoborder}, we have to use a few preliminary higher integrability and H\"older 
continuity results (see Theorems \ref{HO}-\ref{HI} below). Some nonlinear Calder\'on-Zygmund estimates in a non-standard setting are also employesd (see Theorem \ref{cz0} below). We then combine the treatment of the two phases via an exit-time argument described in Section \ref{morreysec}. This leads to Theorem \ref{main2}. We finally look 
at the gradient regularity. To achieve a good control of the constants we use a different separation of phases. We indeed consider a $p$-phase this time defined by $a(x)\lesssim [a]_{0, \alpha}R^{\alpha-s}$ and a $(p,q)$-phase which is still of the type 
$ [a]_{0, \alpha}R^{\alpha} \ll a(x)$. A suitable choice of the number $s\in [0,\alpha)$ and of the constants involved will eventually determine the H\"older continuity exponent $\beta_0$ of $Du$ appearing in Theorem \ref{main1}. The proof of Theorem \ref{main3} takes conceptually more effort and requires a double application of the blow-up lemma. The first time, this will be done in order to by-pass the fact that the assumed H\"older continuity in \rif{main3} is not quantitatively preserved under blow-up. This does not provide us with the higher integrability result (Theorem \ref{HI} below) allowing to apply Lemma \ref{p-harm} as for Theorem \ref{main1}. For this we proceed in two steps. First we apply a more traditional, non-quantitative version of the harmonic approximation to get higher H\"older continuity of $u$. Once this is done, we recover the missing H\"older continuity in the blow-up procedure. We can then proceed as for Theorem \ref{bound1}, with the quantitative version of the harmonic approximation.
\section {Notation and preliminaries}\label{notazioni}
In this paper, following a usual custom, we denote by $c$ a general constant larger than one. Different occurences from line to line will be still denoted by $c$, while special occurrences will be denoted by $c_1, c_2,  \tilde c$ or the like. Relevant
dependencies on parameters will be emphasised using parentheses, i.e., ~$c_{1}\equiv c_1(n,p,\ratio)$ means that $c_1$ depends on $n,p,\ratio$. We denote by $ B_r(x_0):=\{x \in \er^n \, : \,  |x-x_0|< r\}$ the open ball with center $x_0$ and radius $r>0$; when not important, or clear from the context, we shall omit denoting the center as follows: $B_r \equiv B_r(x_0)$. Very often, when not otherwise stated, different balls in the same context will share the same center. We shall also denote $B_1 = B_1(0)$
if not differently specified. Finally, with $B$ being a given ball with radius $r$ and $\gamma$ being a positive number, we denote by $\gamma B$ the concentric ball with radius $\gamma r$. With $\mathcal B \subset \er^{n}$ being a measurable subset with finite and positive measure $|\mathcal B|>0$, and with $g \colon \mathcal B \to \er^{k}$, $k\geq 1$, being a measurable map, we shall denote by  $$
   (g)_{\mathcal B} \equiv \mean{\mathcal B}  g(x) \, dx  := \frac{1}{|\mathcal B|}\int_{\mathcal B}  g(x) \, dx
$$
its integral average. With $f\colon \Omega \to \er$ and $\mathcal B \subset \Omega$, with $\gamma \in (0,1)$ being a given number, we shall denote
$$
[f]_{0,\gamma; \mathcal B} := \sup_{x,y \in \mathcal B, x \not= y} \, 
\frac{|f(x)-f(y)|}{|x-y|^\gamma}
\,,\qquad [f]_{0,\gamma} \equiv [f]_{0,\gamma; \Omega} \,.$$
\begin{remark}\label{local-re}We note that, since all the results from Theorems \ref{main1}-\ref{main4} are local in nature, when considering assumptions \rif{bound2} and \rif{bound3}, we can also assume that $u \in L^{\infty}_{\loc}(\Omega)$ and $u \in C^{0, \gamma}_{\loc}(\Omega)$. This can be done up to passing to open subsets $\Omega_0\Subset \Omega$ and restating all the results with an additional dependence of the constants on $\dista$. In the same way, with $u$ being the minimizer of Theorems \ref{main1}-\ref{main3}, we can assume that $H(\cdot, Du) \in L^1_{\loc}(\Omega)$. We prefer to start with the global formulations for the ease of exposition. 
\end{remark}
In order to shorten the notation, we shall express the dependence of the constants on the various basic parameters using the symbol $\data$. This is defined as a set of objects that will vary according to the assumptions considered as follows
\eqn{idati}
$$
\data \equiv 
\left\{ 
\begin{array}{ccc}
n,p,q,\ratio, \alpha, \omega(\cdot),  [a]_{0, \alpha}, \|H(\cdot,Du)\|_{L^1(\Omega)} &\mbox{when \rif{bound1} holds}\\[7 pt]
n,p,q,\ratio, \alpha, \omega(\cdot), [a]_{0, \alpha}, \|u\|_{L^\infty(\Omega)}&\mbox{when  \rif{bound2} holds}\\[7 pt]
n,p,q,\ratio, \alpha, \gamma, \omega(\cdot),  [a]_{0, \alpha}, [u]_{0, \gamma}&\mbox{when \rif{bound3} holds}\;.
\end{array}
\right.
$$
We also need a local version of $\data$. For this, with $\Omega_0\Subset \Omega$ being a fixed open subset, we denote by $\datao$ the above set of parameters in addition to $\dista$:
\eqn{idatio}
$$
\datao \equiv \data, \dista\;.
$$ 
\begin{remark} We shall sometimes consider functionals of the type in \rif{funF} under the only assumptions \rif{growtH}. In this case $\data$ and $\datao$ omit the specifications of $\omega(\cdot)$, since \rif{assF} are not considered. 
\end{remark}
In the following we shall often deal with the vector field
\eqn{Vpq}
$$
V_{t}(z):= |z|^{(t-2)/2}z \;,\qquad t\in \{p,q\}\;,
$$
so that 
$
V_p(z):= |z|^{(p-2)/2}z$ and $V_q(z):= |z|^{(q-2)/2}z$. 
This vector field is of common use to formulate the monotonicity properties of operators of $p$-Laplacean type and related integral functionals (see for instance \cite{KM1, KM2}). In this respect we record the following property
\eqn{monotonicity}
$$
\left|V_{t}(z_1)-V_{t}(z_2)\right|^2 \leq c \,\big\langle |z_1|^{t-2}z_1-|z_2|^{t-2}z_2, z_1-z_2\big\rangle\,,
$$
We shall also use the following equivalence:
\eqn{diffV}
$$
|V_{t}(z_1)-V_{t}(z_2)| \approx (|z_1|+|z_2|)^{(t-2)/2}|z_1-z_2|\;,
$$
that holds with involved constants depending only on $n,t$ (see for instance \cite{KM1, KM2}).

Finally, we summarize some basic terminology about so called generalized Orlicz-Sobolev spaces. Roughly speaking, these are Sobolev spaces defined by the fact that the distributional derivatives lie in a suitable Orlicz space (actually, a Orlicz-Musielak space according to the terminology in use \cite{diening}) rather than a Lebesgue spaces, as usual. Classical Sobolev spaces are then a particular case. Such spaces, and related variational problems, are for instance discussed in \cite{CH, diening, Hasto}, to which we refer for more details. Here we shall simply consider spaces related to the Young type function defined in \rif{HH}. 
We then define
\eqn{defi-sob}
$$
W^{1, H}(\Omega):= \left\{ u \in W^{1,1}(\Omega)\, \colon \, H(\cdot, Du)\in L^1(\Omega) \right\} \;,
$$
with the local variant being defined in the obvious way and $W^{1, H}_0(\Omega)=W^{1, H}(\Omega)\cap W^{1,p}_0(\Omega)$. The one in \rif{defi-sob} is in a sense the natural energy space associated to variational problems under growth conditions \rif{growtH}; see for instance \cite{HHT}. Finally, with $a_0 \geq 0$ being a non-negative number, we shall use the frozen Young function
\eqn{HH00}
$$
H_0(z) := |z|^p +a_0|z|^q=:K_0(|z|)\;, 
$$
with the notation fixed accordingly to the one used in \rif{HH}. 
Then $W^{1, H_0}(\Omega)$, and its related variants, can be defined exactly as in \rif{defi-sob} with the choice $a(x)\equiv a_0$. Note that $[0,\infty)\ni s \mapsto K_0(s)\in [0,\infty)$ is a strictly convex, monotone smooth bijection of $[0,\infty)$. Denoting  by $\widetilde{K_0}$ its Young's (or convex) conjugate, i.e., $\widetilde{K_0}(s):=\sup_{\tau>0} [s\tau-K_0(\tau)]$, then the following standard property
\eqn{prima-prop}
$$
\widetilde{K_0}\bigg(\frac{K_0(s)}{s}\bigg)\leq K_0(s)
$$
holds for every $s>0$ (see \cite{BL} for more details). We shall later use the following Young type inequality, valid for every $\varepsilon\in(0,1)$ and $s, t\geq 0$:
\begin{equation}\label{Young.eps}
st\leq \frac{K_0(s)}{\varepsilon^{q-1}}+\varepsilon\,\widetilde{K_0}(t)\;.
\end{equation}
We briefly report the proof. For $0 < \kappa \leq 1$ and for all $s\geq0$, since $q\geq p$, we have
\begin{eqnarray*}
\widetilde{K_0}(\kappa s) &= &\sup_{\tau>0}\left(\kappa s\tau-K_0(\tau)\right)\leq \sup_{\tau>0}\left(\kappa^{\frac{q}{q-1}}s\,\kappa^{-\frac1{q-1}}\tau-\kappa^{\frac{q}{q-1}}K_0(\kappa^{-\frac{1}{q-1}}\tau)\right)\\
&=& \sup_{\tilde\tau=\kappa^{-1/(q-1)}\tau>0}\big(\kappa^{\frac{q}{q-1}}s\,\tilde\tau-\kappa^{\frac{q}{q-1}}K_0(\tilde\tau)\big)=\kappa^{\frac{q}{q-1}}\widetilde{K_0}(s)\;.
\end{eqnarray*}
Thus, with $\varepsilon\in(0,1)$, we have the standard Young's inequality for conjugate functions yields 
$st\leq K_0(\eps^{(1-q)/q}s)+\widetilde{K_0}(\eps^{(q-1)/q}t)$ and \rif{Young.eps} follows using the property in the above display and again that $q\geq p$ while $\eps \in (0,1)$. 

\section{First regularity results}
We review a few basic regularity results available for minimizers of functionals with double phase. 
The main references here are \cite{BCM1, CM1, CM2, OK2} and the results are essentially proved there. We shall treat minimizers of functionals as in \rif{funF} under the only growth assumptions in \rif{growtH}. Here as in the rest of the paper, we shall denote by $\Omega_0$ an open subset such that $\Omega_0 \Subset \Omega$. We start by a Caccioppoli type inequality from \cite{CM1}. 
\begin{prop}\label{cacc-gen} Let $ u \in W^{1,p}(\Omega)$ be a local minimiser of the functional $\mathcal F$ defined in \trif{funF}, under the assumptions \trif{growtH}, with $a(\cdot)\in L^{\infty}_{\loc}(\Omega)$ and $u \in L^q_{\loc}(\Omega)$. Then there exists a constant depending only on $n,p, q,\ratio$ such that
\eqn{cacc0}
$$
\int_{B_{t}} H(x,Du)\, dx \leq c\int_{B_{s}}K\left(x,\left|\frac{u-u_0}{s-t}\right|\right) \, dx\;,
$$ 
holds whenever $B_t \Subset B_s \Subset \Omega$ are concentric balls and $u_0\in \er$, where $K(\cdot)$ has been introduced in \trif{HH}. 
In the same way, the following related inequality on level sets 
\eqn{cacc3---}
$$ \int_{B_{t}} H(x,D(u-k)_{\pm})\, dx \leq  
c\int_{B_{s}} K\left(x,\left|\frac{(u-k)_{\pm}}{s-t}\right|\right)\, dx  
$$
holds for every $k \in \er$, $t < s$, where $c\equiv c (n,p,q, \ratio)$ and  
\eqn{letroncate}
$$(u-k)_+:= \max\{u-k,0\}\qquad \mbox{and}\qquad (u-k)_-:= \max\{k-u,0\}\;.$$
\end{prop}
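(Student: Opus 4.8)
The plan is to run the classical Caccioppoli argument, with the pure power $|z|^p$ replaced by the Young function $H(x,z)=K(x,|z|)$ and with the lack of homogeneity of $K(\cdot)$ compensated by a hole-filling iteration. I would fix concentric balls $B_t\Subset B_s\Subset\Omega$; for arbitrary radii $t\le\tau_1<\tau_2\le s$ I pick a cut-off $\eta\in C^\infty_c(B_{\tau_2})$ with $0\le\eta\le1$, $\eta\equiv1$ on $B_{\tau_1}$ and $|D\eta|\le 2/(\tau_2-\tau_1)$, and take as competitor $v:=u-\eta(u-u_0)$. By the standing assumptions $u\in W^{1,p}(\Omega)\cap L^q_{\loc}(\Omega)$ and $a(\cdot)\in L^\infty_{\loc}(\Omega)$, the function $v$ lies in $W^{1,1}_{\loc}(\Omega)$, satisfies $H(\cdot,Dv)\in L^1_{\loc}(\Omega)$ and $\operatorname{supp}(u-v)\subset\operatorname{supp}\eta\Subset B_{\tau_2}\Subset\Omega$, hence it is admissible in Definition \ref{defimain}.

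From the minimality inequality $\mathcal F(u,\operatorname{supp}(u-v))\le\mathcal F(v,\operatorname{supp}(u-v))$, upgraded to $B_{\tau_2}$ by adding the common contribution $F(x,u,Du)=F(x,v,Dv)$ off $\operatorname{supp}(u-v)$, together with the two-sided bound \rif{growtH}, I get $\nu\int_{B_{\tau_2}}H(x,Du)\,dx\le L\int_{B_{\tau_2}}H(x,Dv)\,dx$. Since $Dv=(1-\eta)Du-D\eta(u-u_0)$ and $0\le1-\eta\le1$, the convexity of $z\mapsto|z|^p$ and $z\mapsto|z|^q$ (together with $(1-\eta)^p,(1-\eta)^q\le1-\eta$) gives the pointwise bound $H(x,Dv)\le c(p,q)\big[(1-\eta)H(x,Du)+K(x,|D\eta|\,|u-u_0|)\big]$ a.e. in $B_{\tau_2}$. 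Plugging this in, bounding $\int_{B_{\tau_2}}(1-\eta)H(x,Du)\le\int_{B_{\tau_2}}H(x,Du)-\int_{B_{\tau_1}}H(x,Du)$ (as $\eta\equiv1$ on $B_{\tau_1}$), and filling the hole, I arrive at
\[
\int_{B_{\tau_1}}H(x,Du)\,dx\le\theta\int_{B_{\tau_2}}H(x,Du)\,dx+c\int_{B_{s}}K\!\left(x,\frac{|u-u_0|}{\tau_2-\tau_1}\right)\,dx
\]
for some $\theta=\theta(p,q,\ratio)\in[0,1)$ and $c=c(p,q)$.

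To remove the $(\tau_2-\tau_1)$-dependence from the last term I use that $r\mapsto K(x,r)$ is nondecreasing and $K(x,\lambda r)\le\lambda^qK(x,r)$ for $\lambda\ge1$, so that $\int_{B_s}K(x,|u-u_0|/(\tau_2-\tau_1))\,dx\le\big((s-t)/(\tau_2-\tau_1)\big)^q\mathcal K$ with $\mathcal K:=\int_{B_s}K(x,|u-u_0|/(s-t))\,dx<\infty$. Since $\varrho\mapsto\int_{B_\varrho}H(x,Du)\,dx$ is finite and nondecreasing on $[t,s]$ (recall $H(\cdot,Du)\in L^1(\Omega)$ and $B_s\Subset\Omega$), a standard iteration lemma applied to $\phi(\tau_1)\le\theta\phi(\tau_2)+c(s-t)^q(\tau_2-\tau_1)^{-q}\mathcal K$ yields $\int_{B_t}H(x,Du)\,dx\le c\,\mathcal K$, which is precisely \rif{cacc0}; the final constant involves only the convexity constants and $c(\theta,q)$, hence depends on $n,p,q,\ratio$ only, not on $u_0$, $[a]_{0,\alpha}$ or $\|a\|_{L^\infty}$. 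For the level-set inequality \rif{cacc3---} I repeat the scheme with $v:=u\mp\eta(u-k)_{\pm}$: these agree with $u$ off $\{\pm(u-k)>0\}$, on that set $|Dv|\le(1-\eta)|D(u-k)_{\pm}|+|D\eta|(u-k)_{\pm}$ and $Du=\pm D(u-k)_{\pm}$ a.e., and comparing energies over $\{\pm(u-k)>0\}$ — while noting that $D(u-k)_{\pm}$ and $(u-k)_{\pm}$ vanish a.e. outside it, so all bounds extend to the whole ball — the same computation and iteration give \rif{cacc3---}.

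The whole proof is a routine adaptation of the classical Caccioppoli inequality, so I do not expect a real obstacle. The points requiring care are: (i) the bookkeeping forced by the weak notion of minimality, which is stated only on $\operatorname{supp}(u-v)$ and must be promoted to $B_{\tau_2}$ using that $u=v$ off the support; (ii) getting $\theta<1$ in the hole-filling step and absorbing the $(\tau_2-\tau_1)$-dependence, which here sits \emph{inside} the non-homogeneous, $x$-dependent $K$ rather than as an external power — this forces the mildly wasteful passage $K(x,r/(\tau_2-\tau_1))\le((s-t)/(\tau_2-\tau_1))^qK(x,r/(s-t))$; and (iii) in the truncated case, keeping the integration domains $\{\pm(u-k)>0\}$ straight and observing that the truncations extend all bounds harmlessly to the full ball. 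The only genuinely new ingredient with respect to the $p$-growth case is the Young function $K(\cdot)$ replacing a pure power, which the convexity and monotonicity of $K$ handle transparently.
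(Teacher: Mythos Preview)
Your proof is correct and follows the standard Caccioppoli argument for functionals with $H$-growth, which is precisely the approach of \cite{CM1} to which the paper defers for this result. The paper itself gives no proof here, only the reference; your hole-filling plus iteration scheme, together with the $K(x,\lambda r)\le \lambda^q K(x,r)$ bound to handle the non-homogeneity, is exactly what is needed.
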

Notice that the condition $u \in L^q_{\loc}(\Omega)$ is always satisfied when one of the assumptions \rif{bound1}, \rif{bound2} and \rif{bound3} is in force. 

Next, a higher integrability result.
\begin{theorem}\label{HI} Let $ u \in W^{1,p}(\Omega)$ be a local minimiser of the functional $\mathcal F$ defined in \trif{funF}, under the assumptions \trif{growtH}. Moreover, assume that one of the conditions in \trif{bound1}, \trif{bound2} and \trif{bound3} is satisfied. Then there exists a higher integrability exponent 
$\delta\equiv \delta (\data)\in (0,1)$ such that $H(\cdot, Du)\in L^{1+\delta}_{\loc}(\Omega)$. Furthermore, the reverse type H\"older inequality 
\eqn{riversa}
$$
\left(\mean{B_{R/2}} [H(x,Du)]^{1+\delta}\, dx\right)^{1/(1+\delta)} \leq c 
\mean{B_{R}} H(x,Du)\, dx
$$
holds for a constant $c\equiv c(\data)$, whenever $B_{R}\Subset \Omega$ is a ball with $R\leq 1$. 
\end{theorem}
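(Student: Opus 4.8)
The plan is to run the classical Gehring--Giaquinta--Modica self-improving integrability scheme: I will produce, for $g:=H(\cdot,Du)$, a reverse H\"older inequality with increasing supports of the form
$$
\mean{B_{R/2}}H(x,Du)\,dx\le c\left(\mean{B_R}[H(x,Du)]^{d}\,dx\right)^{1/d}\,,
$$
with $c\equiv c(\data)$ and a fixed exponent $d\equiv d(n,p,q)\in(0,1)$, valid whenever $B_R\Subset\Omega$ with $R\le1$; Theorem \ref{HI} then follows from a standard self-improving (Gehring-type) lemma applied on $B_R$. The starting point is Proposition \ref{cacc-gen}, which applies since $u\in L^q_{\loc}(\Omega)$ in each of the cases \rif{bound1}, \rif{bound2}, \rif{bound3} and $a(\cdot)\in C^{0,\alpha}\subset L^\infty_{\loc}$; taking there $u_0=(u)_{B_R}$, $B_t=B_{R/2}$, $B_s=B_R$ and dividing by $|B_R|\simeq|B_{R/2}|$ yields
$$
\mean{B_{R/2}}H(x,Du)\,dx\le c\mean{B_R}\left[\frac{|u-(u)_{B_R}|^p}{R^p}+a(x)\frac{|u-(u)_{B_R}|^q}{R^q}\right]dx\,.
$$
Hence everything reduces to a Sobolev--Poincar\'e type estimate adapted to $H$, namely to bounding the right-hand side by $c(\data)\big(\mean{B_R}[H(x,Du)]^{d}\,dx\big)^{1/d}$.

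The $p$-term is harmless: the usual Sobolev--Poincar\'e inequality, used with the gradient exponent lowered just enough for integrability, gives, for some $d_1\equiv d_1(n,p)\in(0,1)$,
$$
\mean{B_R}\frac{|u-(u)_{B_R}|^p}{R^p}\,dx\le c\left(\mean{B_R}|Du|^{d_1p}\,dx\right)^{1/d_1}\le c\left(\mean{B_R}[H(x,Du)]^{d_1}\,dx\right)^{1/d_1}\,,
$$
because $|Du|^{d_1p}\le[H(x,Du)]^{d_1}$. For the cross term $a(x)|u-(u)_{B_R}|^q/R^q$ I will separate phases on $B_R$, following \cite{CM1,CM2}. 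Writing $a_0:=\inf_{B_R}a\ge0$, in the \emph{$p$-phase} $[a]_{0,\alpha}R^\alpha\le a_0$ one has $a_0\le a(\cdot)\le c\,a_0$ on $B_R$, so $H(\cdot,z)$ is comparable on $B_R$ to the frozen $N$-function $H_0$ of \rif{HH00}; applying the Sobolev--Poincar\'e inequality with exponents $p$ and $q$ to $u$ and to $a_0^{1/q}u$ (legitimate since $a_0|Du|^q\le a(x)|Du|^q\le H(x,Du)\in L^1(B_R)$) and using $a_0|Du|^q\le H(x,Du)$ yields a bound of the form $c\big(\mean{B_R}[H(x,Du)]^{d_2}\,dx\big)^{1/d_2}$ with $d_2\equiv d_2(n,p,q)\in(0,1)$. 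In the \emph{$(p,q)$-phase} $[a]_{0,\alpha}R^\alpha>a_0$, H\"older continuity of $a$ gives $a(x)\le c[a]_{0,\alpha}R^\alpha$ on $B_R$, whence
$$
\mean{B_R}a(x)\frac{|u-(u)_{B_R}|^q}{R^q}\,dx\le c[a]_{0,\alpha}R^{\alpha-q}\mean{B_R}|u-(u)_{B_R}|^q\,dx\,.
$$

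The $(p,q)$-phase is where the structural bounds enter, and it is the main obstacle. In case \rif{bound1} I will estimate $|u-(u)_{B_R}|^q$ by Sobolev--Poincar\'e with a lowered gradient exponent $q_*<p$ (possible since $q/p\le1+\alpha/n<n/(n-p)$, so that $|Du|^{q_*}\le[H(x,Du)]^{q_*/p}$), then split off the excess power $q/p\ge1$ and bound one factor by $\big(\mean{B_R}[H(x,Du)]^{q_*/p}\,dx\big)^{p/q_*}\le\mean{B_R}H(x,Du)\,dx\le cR^{-n}\|H(\cdot,Du)\|_{L^1(\Omega)}$; this produces the compensating factor $R^{\alpha-n(q-p)/p}\le1$ precisely because \rif{bound1} holds and $R\le1$, leaving $c(\data)\big(\mean{B_R}[H(x,Du)]^{q_*/p}\,dx\big)^{p/q_*}$. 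In case \rif{bound2} the bound $|u-(u)_{B_R}|^{q-p}\le(2\|u\|_{L^\infty})^{q-p}$ reduces the integral to the $p$-power, and Sobolev--Poincar\'e leaves the factor $R^{\alpha-(q-p)}\le1$, precisely because $q\le p+\alpha$; in case \rif{bound3} the bound $|u-(u)_{B_R}|^{q-p}\le([u]_{0,\gamma}(2R)^{\gamma})^{q-p}$ produces an extra $R^{\gamma(q-p)}$, and Sobolev--Poincar\'e on the remaining $p$-power leaves $R^{\alpha-(1-\gamma)(q-p)}\le1$, precisely because $q\le p+\alpha/(1-\gamma)$. In each case the norm $\|H(\cdot,Du)\|_{L^1(\Omega)}$, resp.\ $\|u\|_{L^\infty}$, resp.\ $[u]_{0,\gamma}$ — which is part of $\data$ in the corresponding case — is absorbed into the constant, and the outcome is $c(\data)\big(\mean{B_R}[H(x,Du)]^{d_3}\,dx\big)^{1/d_3}$ with $d_3\equiv d_3(n,p,q)\in(0,1)$. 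Taking $d:=\max\{d_1,d_2,d_3\}<1$ and using that $t\mapsto\big(\mean{B_R}f^t\,dx\big)^{1/t}$ is non-decreasing, all the estimates combine into the desired reverse H\"older inequality with increasing supports, and Gehring's lemma provides $\delta\equiv\delta(\data)\in(0,1)$ together with \rif{riversa}. I note that only the non-strict versions of \rif{bound1}--\rif{bound3} are used here, since the relevant power of $R$ then has a non-negative exponent; in particular the argument covers the borderline case $q/p=1+\alpha/n$.
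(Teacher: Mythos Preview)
Your proof is correct and follows essentially the same route as the paper: Caccioppoli inequality from Proposition \ref{cacc-gen}, a phase separation on $B_R$ according to the size of $\inf_{B_R}a$ relative to $[a]_{0,\alpha}R^\alpha$, a Sobolev--Poincar\'e argument in each phase, and Gehring's lemma to conclude. The paper only gives details for the case \trif{bound3} (deferring \trif{bound1}--\trif{bound2} to \cite{CM1,CM2,OK2}), while you provide an explicit treatment of all three cases; your handling of the $q$-term in \trif{bound1} via the compensating factor $R^{\alpha-n(q-p)/p}$ and the energy bound $\|H(\cdot,Du)\|_{L^1(\Omega)}$ is exactly the mechanism used in those references. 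One cosmetic remark: your phase labels are swapped relative to the paper's convention in Section \ref{lefasi}, where the \emph{$p$-phase} is the regime $a_{\rm i}(B_R)\le M[a]_{0,\alpha}R^{\alpha}$ (small $a$) and the \emph{$(p,q)$-phase} is the complementary one (large $a$); this is purely terminological and does not affect the argument.
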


\begin{proof} This has already been obtained in \cite{CM1, CM2, OK2} as far as \rif{bound1}-\rif{bound2} are used. Here we cover the missing case of \rif{bound3}, essentially recalling the arguments for \cite[Theorem 1.2]{CM2}, to which we refer for more details. Consider a ball $B_R\Subset \Omega$ with $R\leq 1$ and consider the quantity $\ai(B_{R}):= \min_{\overline {B_R}}\, a(x)$. If 
$\ai(B_{R})> 4[a]_{\alpha} R^\alpha$ then, exactly as in \cite[Theorem 1.2]{CM2}, we find that the reverse type inequality
\eqn{inversa1}
$$
\mean{B_{R/2}}H(x, Du)\, dx \leq c \left(\mean{B_{R}}[H(x, Du)]^{d}\, dx  \right)^{1/d}\;,
$$
holds for $d:=\max\{{n}/({n+p}), 1/p\}\in (0,1)$, where the constant $c$ depends only on $n,p,q, \nu, L$. We now consider the remaining case $\ai(B_{R})\leq 4[a]_{\alpha} R^\alpha$, where we estimate as follows:
\begin{eqnarray}
\notag a(x)\left|\frac{u-(u)_{B_{R}}}{R}\right|^{q-p} &\leq& 8 [a]_{0, \alpha}R^{p-q+\alpha}[\osc_{B_{R}}\, u]^{q-p}\\
\notag &\stackleq{bound3} &c[u]_{0, \gamma;B_{R}}R^{p-q+\alpha+ \gamma(q-p)}\stackleq{bound3}  c(\data)\;.
\end{eqnarray}
The last inequality allows to conclude
$$\mean{B_{R}} a(x)\left|\frac{u-(u)_{B_{R}}}{R}\right|^q\, dx \leq c \mean{B_{R}} \left|\frac{u-(u)_{B_{R}}}{R}\right|^p\, dx$$ and combining this with \rif{cacc0} (with $s=R$, $t=R/2$, $u_0=(u)_{B_{R}}$), we get, by Sobolev-Poincar\'e inequality
\begin{eqnarray*}
\notag &&\mean{B_{R/2}} H(x,Du)\, dx \leq  c \mean{B_{R}} \left|\frac{u-(u)_{B_{R}}}{R}\right|^p\, dx\\  &&\quad \leq 
c\left(\mean{B_{R}} |Du|^{pd}\, dx\right)^{1/d} \leq  c\left(\mean{B_{R}} [H(x, Du)]^{pd}\, dx\right)^{1/d} \;,
\end{eqnarray*}
where $c\equiv c (\data)$ and $d$ is the same appearing in \rif{inversa1}. We conclude that $H(\cdot, Du)$ satisfies a reverse type H\"older inequality, that is \rif{inversa1} holds for every ball $B_{R} \Subset \Omega$ such that $R \leq 1$. At this point \rif{riversa} follows using a variant of Gehring's lemma on reverse H\"older inequalities \cite[Theorem 6.6]{G}. 
\end{proof}
Finally, we collect some H\"older continuity assertions from \cite{BCM1, CM1, OK2}. Special emphasis is put on the precise dependence of the various constants. 
\begin{theorem}\label{HO} Let $ u \in W^{1,p}(\Omega)$ be a local minimiser of the functional $\mathcal F$ defined in \trif{funF}, under the assumptions \trif{bound1} and \trif{growtH}. Then $u$ is locally H\"older continuous. Moreover, for every open subset $\Omega_0\Subset \Omega$, there exists a H\"older continuity exponent
\eqn{basicexp} 
$$\gamma\equiv \gamma\left(n,p,q,\ratio, \alpha, [a]_{0, \alpha}, \|u\|_{L^{\infty}(\Omega_0)}\right) \in (0,1)$$ such that
\eqn{basichol}
$$
\|u\|_{L^\infty(\Omega_0)}+ [u]_{0, \gamma;\Omega_0} \leq c (\datao)\;, 
$$
and the oscillation estimate 
\eqn{localstimahol}
$$\osc_{B_{\varrho}}\, u   \leq c \left(\frac{\varrho}{r}\right)^{\gamma}
\osc_{B_{r}}\,  u $$ holds for $c\equiv c \left(n,p,q,\ratio, \alpha, [a]_{0, \alpha}, \|u\|_{L^{\infty}(\Omega_0)}\right)$ 
and all concentric balls $B_\varrho \Subset B_{r} \Subset \Omega_0\Subset \Omega$ with $r \leq 1$. Finally, the dependence of the exponent in \trif{basicexp} can be reformulated as 
\eqn{basicexp2} 
$$\gamma\equiv \gamma\left(n,p,q,\ratio, \alpha, [a]_{0, \alpha}, \|H(\cdot, Du)\|_{L^{1}(\Omega)}, \dista\right) \in (0,1)\;.$$

The same conclusions hold when \trif{bound2} is assumed instead of \trif{bound1}; in this case the $L^\infty$-estimate in \trif{basichol} becomes immaterial as $u$ is assumed to be globally bounded. Moreover, the exponent $\gamma$ depends only on $\data$ and it is independent of the open subset $\Omega_0\Subset \Omega$ considered.
\end{theorem}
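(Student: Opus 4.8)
The assertions are, in substance, those of \cite{BCM1, CM1, CM2, OK2}; the plan is to recall the De Giorgi-type argument, keeping careful track of which quantities the constants depend on. It splits into two steps --- \emph{local boundedness of $u$}, then \emph{oscillation decay for bounded minimisers} --- followed by a short bookkeeping for the reformulated dependence \rif{basicexp2}.

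\emph{Step 1: local boundedness.} If $p\ge n$ there is essentially nothing to do: for $p>n$ the Sobolev-Morrey embedding applied to $u\in W^{1,p}_{\loc}(\Omega)$ directly yields local H\"older continuity and the quantitative estimates, while the borderline $p=n$ follows in the same way from the improved integrability $|Du|\in L^{p(1+\delta)}_{\loc}(\Omega)$ granted by Theorem \ref{HI}. So assume $p<n$. An elementary computation shows that \rif{bound1} forces the \emph{strict} inequality $q<p^{*}:=np/(n-p)$ --- even in the borderline case $q/p=1+\alpha/n$, as $p>1$ --- so that $u\in W^{1,p}_{\loc}(\Omega)\hookrightarrow L^{p^{*}}_{\loc}(\Omega)\subset L^{q}_{\loc}(\Omega)$. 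Then the right-hand side of the level-set Caccioppoli inequality \rif{cacc3---} is finite and well structured: on $B_R\Subset\Omega$ with $R\le1$, the double-phase term is estimated by H\"older's inequality through
$$
\int_{B_{s}}a(x)\Big|\frac{(u-k)_{\pm}}{s-t}\Big|^{q}dx\le \frac{\|a\|_{L^{\infty}(B_{R})}}{(s-t)^{q}}\,\|(u-k)_{\pm}\|_{L^{p^{*}}(B_{s})}^{q}\,|A_{k,s}|^{1-q/p^{*}}\,,
$$
where $A_{k,s}:=\{x\in B_{s}:(u-k)_{\pm}>0\}$, and since $1-q/p^{*}>0$ this supplies the extra power of the super-level-set measure needed to run the classical fast-geometric-convergence De Giorgi iteration directly on \rif{cacc3---}. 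The outcome is $\sup_{B_{R/2}}|u|\le c$; applying the same iteration to $u-(u)_{B_R}$ and invoking the Sobolev-Poincar\'e inequality yields the homogeneous counterpart, bounding $\osc_{B_{R/2}}u$ by a power of $\int_{B_R}H(x,Du)\,dx$ up to lower-order terms, with constants depending only on $n,p,q,\ratio,\alpha,[a]_{0,\alpha}$; chaining finitely many such balls to cover $\Omega_0$ then controls $\osc_{\Omega_0}u$, hence $\|u\|_{L^{\infty}(\Omega_0)}$, in terms of $\datao$-quantities.

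\emph{Step 2: oscillation decay.} With $u$ now bounded on $\Omega_0$, fix $B_{2r}(x_0)\Subset\Omega_0$ with $r\le1$ and separate phases, following \cite{CM1}. If $a(x)\le4[a]_{0,\alpha}(2r)^{\alpha}$ throughout $B_{2r}$ (the $p$-phase), then in \rif{cacc3---} the $q$-term is --- thanks to this smallness of $a$, the bound $(u-k)_{\pm}\le2\|u\|_{L^{\infty}(\Omega_0)}$, and the relation $q-p\le\alpha$ contained in \rif{bound1} --- a controlled perturbation of the $p$-term at scale $r$, so $u$ and $-u$ satisfy on $B_{2r}$ the Caccioppoli inequalities driving a (homogeneous) De Giorgi iteration with constant $\Lambda$ depending only on $n,p,q,\ratio,\alpha,[a]_{0,\alpha}$ and $\|u\|_{L^{\infty}(\Omega_0)}$, hence an oscillation decay at scale $r$. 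In the complementary $(p,q)$-phase one has $a(x)\eqs a_{r}:=\inf_{B_{2r}}a$ on $B_{2r}$, so $\mathcal F$ is there comparable to a functional with integrand $\eqs H_{0}$ as in \rif{HH00} --- a uniformly elliptic generalised Young function --- whose minimisers enjoy oscillation decay with universal constants by the standard regularity theory for uniformly elliptic functionals of Orlicz type. Combining the two alternatives through the usual exit-time/iteration scheme gives the scaling-invariant estimate \rif{localstimahol}, with $\gamma$ and $c$ depending exactly on the quantities in \rif{basicexp}; integrating \rif{localstimahol} along a chain of balls covering $\Omega_0$ yields the seminorm bound in \rif{basichol}, and inserting the bound for $\|u\|_{L^{\infty}(\Omega_0)}$ from Step 1 reformulates the dependence of $\gamma$ as in \rif{basicexp2}.

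\emph{The case \rif{bound2}, and the main obstacle.} Under \rif{bound2} the boundedness of $u$ --- globally, or, after the localisation of Remark \ref{local-re}, on $\Omega_0\Subset\Omega$ --- is part of the hypotheses: Step 1 becomes unnecessary, $\|u\|_{L^{\infty}}$ already belongs to $\data$, and $u\in L^{q}_{\loc}(\Omega)$ (required by Proposition \ref{cacc-gen}) holds automatically; hence $\Lambda$, and with it $\gamma$ and $c$, depend only on $\data$ and are independent of the subset $\Omega_0$. The point that I expect to demand the most care is the handling, throughout, of the double-phase term $a(x)|(u-k)_{\pm}|^{q}$ in \rif{cacc3---}, which carries $q$-growth exceeding the $p$-growth the De Giorgi machinery is tuned to: it is precisely the sharp balance $q/p\le1+\alpha/n$ that renders it subordinate --- in Step 1 (for $p<n$) via the strict inequality $q<p^{*}$ together with Sobolev embedding, and in Step 2 via the phase separation, where on the $p$-phase the coefficient $a$ is so small relative to the scale that the $q$-term becomes a genuine perturbation. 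Everything else reduces to classical De Giorgi theory and to Orlicz-type regularity for uniformly elliptic functionals.
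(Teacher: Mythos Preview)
Your sketch is broadly correct and parallels the arguments in the references \cite{BCM1, CM1, CM2, OK2} that the paper's own proof simply invokes; in particular the two-step plan (local boundedness, then De Giorgi oscillation decay with phase separation) is exactly what underlies those papers.

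There is one genuine imprecision. In Step 2 you write that ``the relation $q-p\le\alpha$ [is] contained in \rif{bound1}''. But \rif{bound1} only gives $q-p\le p\alpha/n$, which implies $q-p\le\alpha$ \emph{only when $p\le n$}. For $p>n$ under \rif{bound1} one may have $q-p>\alpha$, and then your $p$-phase control
\[
a(x)\Big|\frac{(u-k)_\pm}{s-t}\Big|^{q-p}\lesssim [a]_{0,\alpha}\,r^{\alpha-(q-p)}\,\|u\|_{L^\infty}^{q-p}
\]
is no longer bounded as $r\downarrow 0$, so the De Giorgi iteration in Step 2 does not close. Your Step 1 treatment of $p>n$ via Sobolev--Morrey does yield local boundedness and a H\"older seminorm bound, but it does \emph{not} by itself produce the oscillation-decay form \rif{localstimahol}, which compares $\osc_{B_\varrho}u$ with $\osc_{B_r}u$ rather than with $\|Du\|_{L^p}$. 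The paper closes this case by appealing to the Harnack inequality of \cite{BCM1} (alternatively, one can feed the higher integrability of Theorem \ref{HI} into Sobolev--Morrey as in \rif{sob-mor} to recover the right scaling). Once you restrict Step 2 to $p\le n$ and invoke Harnack for $p>n$, your sketch and the paper's proof coincide.
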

\begin{proof} The results in \cite[Section 10]{CM1} give that if $\|u\|_{L^{\infty}(\Omega)}$ is finite and $q \leq p+\alpha$, then $u\in C^{0, \gamma}_{\loc}$ and \rif{basichol}-\rif{localstimahol} hold. In this case $\gamma$ depends only on the parameters $n,p,q,\ratio, \alpha, [a]_{0, \alpha}, \|u\|_{L^\infty(\Omega)}$ and no dependence on $\dista$ occurs. Moreover, estimate \rif{localstimahol} follows from \cite[Theorem 3.8]{OK2} or directly from the Harnack inequality proved in \cite{BCM1}. 
In this respect, see how to derive estimates of the type in \rif{localstimahol} from Harnack inequalities in \cite[Section 7.9]{G}. 
This fully covers the case when \rif{bound2} comes into the play. We then consider \rif{bound1}, when $p\leq n$. By the results in \cite{BCM1, CM1} and in particular by \cite[Theorem 3.6]{OK2}, minimizers are locally bounded. Therefore, since $q\leq p+\alpha$ is implied by the bound in $q/p\leq 1+\alpha/n$ when $p\leq n$, then we can reduce to the case when $u$ is bounded and this gives \rif{basichol}-\rif{localstimahol} again; moreover, the dependence displayed in \trif{basicexp} follows in any case. To recast the alternative dependence in \rif{basicexp2}, 
we recall that the local bounds proved in \cite{CM1, OK2}, together with standard covering arguments, imply an estimate of the type
$$
\|u\|_{L^{\infty}(\Omega_0)} \leq c \left(\int_{\Omega} [H(x, Du) +1]\, dx\right)^{1/p}\;,
$$
where  $c $ depends on $n,p,q,\ratio, \alpha, [a]_{0, \alpha}, 
\|H(\cdot, Du)\|_{L^1(\Omega)}$ and $\dista$. Combining this fact with the dependence in \rif{basicexp} yields the required dependence in \rif{basicexp2}. Finally, when $p > n$, we can 
use the Harnack inequality proved in \cite{BCM1}, and this yields again \rif{localstimahol} for a constant $c$ and an exponent $\gamma$, being independent of $\dista$, but just depending on $n,p,q,\ratio, \alpha, [a]_{0, \alpha}, \|H(\cdot, Du)\|_{L^{1}(\Omega)}$.  
Notice that in the Harnack statements from \cite{BCM1} we have that the constants also depend on the diameter of $\Omega$. This dependence does not occur when one restricts the Harnack inequalities on balls $B_R$ with $R\leq 1$, which is the thing needed here in order to get \rif{localstimahol}. 
\end{proof}
\section{Initial setting for Theorems \ref{main1}-\ref{main2}}\label{setsec}
In this section we begin the proofs of Theorems \ref{main1}-\ref{main2} and fix the initial setting aimed at treating the cases \rif{bound1}-\rif{bound2} in a unified way. We first observe that
$
p+\alpha < p(1+\alpha/n)$ iff $p> n
$, 
so that \rif{bound1} provides a weaker assumption than \rif{bound3} when $p>n$. On the other hand, in this last case minimizers are automatically bounded. 
When $p=n$ we have that the relations between $p$ and $q$ in \rif{bound1}-\rif{bound2} do coincide and therefore \rif{bound1} becomes again an assumption weaker than \rif{bound2}, since it does not require that minimizers are a priori bounded. In this respect, notice that assuming $p=n \geq q-\alpha$ gives that minimizers are bounded by Theorem \ref{HO}. On the contrary, in the case $p<n$, assumptions \rif{bound1} imply \rif{bound2} and we can always reduce to consider \rif{bound2}. Indeed when $q/p\leq 1+\alpha/n$, minimizers are always locally bounded again by Theorem \ref{HO}, and $q/p\leq 1+\alpha/n$ implies that $q\leq p+\alpha$. Summarizing, we shall always make a distinction in the forthcoming proofs: we shall consider the case \rif{bound2} holds when $p<n$ holds, and the case when \rif{bound1} holds but only for $p\geq n$ (and vice-versa). Notice that this is perfectly consistent with the notation in \rif{idati}-\rif{idatio}. In fact, when $p<n$ and \rif{bound2} covers also the case \rif{bound1}, it happens that by Theorem \ref{HO} we can locally bound $\|u\|_{L^\infty}$ by $\|H(\cdot, Du)\|_{L^1}$, so that the final dependence is on this last quantity, exactly as prescribed in \rif{idati} when \rif{bound1} is considered. See the proof of Theorem \ref{HO} for more. 
\section{Quantitative approximation}\label{quant}
Harmonic type approximation lemmas give a way to perform blow-up procedures without passing to the limit. They are in use since the seminal work of De Giorgi \cite{DG}, and we refer to \cite{DM} for an overview of the subject. For recent non-standard version related to the setting of this paper we refer to \cite{DSV}, from which we will also borrow the direct approach used here (vs the indirect one presented in \cite{DG, DM}). In this section we give a version where the control of the constants can be made explicit and becomes of polynomial type. This is the effect of assuming initial higher integrability (see \rif{eqn:v-int002} below). In the following we shall deal with a general vector field $A_0\colon \er^n \to \er^n$, which is assumed to be $C^1(\er^n\setminus\{0\})$-regular and satisfying the following growth and ellipticity assumptions:
\eqn{ass0}
$$
\left\{
\begin{array}{c}
|A_0(z)||z|+|\partial A_0(z)||z|^2 \leq LH_0(z)\\ [8 pt]
\displaystyle \nu\frac{H_0(z)}{|z|^2}|\xi|^2\leq  \langle \partial A_0(z)\xi, \xi\rangle \;,
 \end{array}\right.
 $$
whenever $z \in \er^n\setminus \{0\}$, $\xi \in \er^n$, where $0 <\nu \leq 1 \leq L$ are fixed constants. The function $H_0(\cdot)$ is the one defined in \rif{HH00} for some $a_0\geq 0$. No condition is considered here on the two exponents $1<p < q$, which can be arbitrarily far from each other. We shall in the following use the strict monotonicity property:
\eqn{monotonicity-0}
$$|V_p(z_1)-V_p(z_2)|^2 + a_0 |V_q(z_1)-V_q(z_2)|^2 \leq c \langle A_0(z_1)-A_0(z_2), z_1-z_2\rangle 
$$
that holds for a constant $c\equiv c (n,p,q,\nu)\geq 1$, and for every $z_1, z_2 \in \er^n$; the maps $V_p(\cdot)$ and 
$V_q(\cdot)$ are defined in \trif{Vpq}. This is easily seen to be a consequence of \rif{ass0}$_2$ and \rif{diffV} (see \cite{CM1, KM1}). Related to the vector field $A_0(\cdot)$, we consider the following Dirichlet boundary value problem, which is naturally defined in the Sobolev space $W^{1, H_0}(B)$:
\eqn{Dir00}
$$
\left\{
\begin{array}{c}
-\divo\, A_0(Dh)=0 \quad \mbox{in} \  B\\[5 pt]
h\in  v +W^{1,H_0}_0(B) 
\end{array}
\right.
$$
where $B\subset \er^n$ is a given ball and $v \in W^{1,H_0}(B) $ is a given boundary datum. Solutions are meant in the usual distributional sense. Such a problem has already been considered in \cite[Section 5]{CM3}, to which we refer for existence. We then report the following Calder\'on-Zygmund type result:
\begin{theorem}[\cite{CM3}, Theorem 5.1]\label{cz0}
Let $h\in W^{1,1}(B)$ be a distributional solution to \trif{Dir00} such that $H_0(Dv),H_0(Dh) \in L^1(B)$, under the assumptions \trif{ass0}. Then
$$
H_0(Dv)\in L^t(B) \Longrightarrow H_0(Dh)\in L^t(B)\qquad \mbox{for every}\ 
t >  1\;.
$$
Moreover, for every $t > 1$, there exists a constant $c\equiv c(n,p,q,\ratio, t)$, which is in particular independent of $a_0$, such that the following inequality holds:
\eqn{bordo}
$$
\mean{B} [H_0(Dh)]^t \, dx  \leq 
c\mean{B} [H_0(Dv)]^t \, dx\;.
$$
\end{theorem}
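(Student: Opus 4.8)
The result is quoted from \cite{CM3}; we outline the strategy one would follow to establish it. The plan is first to reduce \rif{bordo} to an a~priori estimate: one assumes in addition that $H_0(Dh)\in L^t(B)$, proves the quantitative bound, and then removes the extra hypothesis by approximation — replace $v$ by a sequence of smooth data $v_k\to v$ in $W^{1,H_0}(B)$, let $h_k\in v_k+W^{1,H_0}_0(B)$ solve the corresponding problem \rif{Dir00} (so that $Dh_k$ is bounded by comparison with the smooth datum), apply the a~priori estimate to $h_k$, and pass to the limit exploiting the resulting equi-integrability of $\{[H_0(Dh_k)]^t\}$ together with the strict monotonicity \rif{monotonicity-0}, which also yields uniqueness for \rif{Dir00}. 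Since $h$ is $A_0$-harmonic in $B$, the interior regularity theory for the autonomous operator $A_0$ — in the form of interior reverse-H\"older inequalities for $H_0(Dh)$ and of a gradient sup-bound $\|H_0(Dh)\|_{L^\infty(B_{\varrho/2}(x_0))}\le c\,\mean{B_\varrho(x_0)}H_0(Dh)\,dx$ valid whenever $B_{\varrho}(x_0)\Subset B$, \emph{with constants independent of $a_0\geq0$}, which is the key uniform input (cf. \cite{CM1, CM3}) — shows that the bulk of $B$ is under control, so that the only genuine difficulty is the integrability of $H_0(Dh)$ in the boundary layer near $\partial B$, where we crucially use that $\partial B$ is $C^{1,1}$.

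For the boundary layer one works on crowns $D:=B_{2\varrho}(x_0)\cap B$ with $x_0\in\partial B$. I would compare $h$ with the $A_0$-harmonic function $w$ in $D$ having $w=h$ on $B\cap\partial B_{2\varrho}(x_0)$ and $w=0$ on $\partial B\cap B_{2\varrho}(x_0)$. An energy comparison between $h$ and $w$ — testing the two equations against the difference $h-w$ corrected by an $H_0$-extension of $v$, using \rif{ass0}, the monotonicity \rif{monotonicity-0} and the Young-type inequality \rif{Young.eps} in the $K_0/\widetilde{K_0}$ duality to absorb error terms — controls $\mean{D}H_0(D(h-w))\,dx$ by $H_0(Dv)$ integrated over a slightly larger boundary layer, via a trace/extension inequality adapted to $H_0$ (legitimate since $q<\infty$, so $K_0$ is doubling). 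On the other hand $w$ vanishes on a $C^{1,1}$ portion of the boundary and therefore enjoys a boundary sup-bound for $H_0(Dw)$ of the same type as the interior one, again with constants independent of $a_0$.

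With these comparison and regularity estimates at hand, the proof is finished by the nonlinear Calder\'on--Zygmund machinery: a stopping-time/Calder\'on--Zygmund decomposition of the superlevel sets of the restricted maximal function $M(H_0(Dh))$ into Whitney-type balls on which this average equals a fixed level $\lambda$; on each such ball — interior (where $w=h$ and the interior sup-bound excludes it from the bad set for $A$ large) or boundary (where the sup-bound for $w$ and the comparison estimate take over) — one produces a good-$\lambda$ inequality of the form
$$
|\{x\in B:\ H_0(Dh)(x)>A\lambda\}|\le \frac{c}{\lambda}\Big(\int_{\{H_0(Dh)>\lambda/A\}}H_0(Dh)\,dx+\int_{\{H_0(Dv)>\lambda/A\}}H_0(Dv)\,dx\Big)
$$
for a suitable $A=A(\data,t)$; multiplying by $\lambda^{t-1}$ and integrating in $\lambda$ over $(0,\infty)$ then yields \rif{bordo}. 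The main obstacle, which is where the real work lies, is twofold: the whole iteration must be run in the Orlicz modular attached to $H_0$ rather than with plain powers, which is why the properties \rif{prima-prop}--\rif{Young.eps} of the conjugate $\widetilde{K_0}$ are indispensable; and, more delicately, one must verify at every step — the Gehring exponent, the interior and boundary sup-bounds, all comparison constants — that the constants do not depend on $a_0$, which ultimately rests on the split structure $H_0(z)=|z|^p+a_0|z|^q$ of \rif{HH00} and on applying the equivalence \rif{diffV} separately to $V_p$ and $V_q$ in \rif{monotonicity-0}.
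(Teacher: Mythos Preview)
The paper does not prove this statement at all: it is simply quoted from \cite{CM3} as a black box, with no argument given here. Your proposal is therefore not to be compared against any proof in the present paper; rather, you have sketched the strategy of \cite[Theorem 5.1]{CM3} itself, and your outline --- approximation to reduce to an a~priori estimate, interior and boundary-layer comparison with an $A_0$-harmonic function enjoying sup-bounds uniform in $a_0$, and a good-$\lambda$/Calder\'on--Zygmund iteration run in the $H_0$-modular --- is broadly faithful to that reference and identifies the two genuine issues (working in the Orlicz setting and tracking $a_0$-independence).
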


Before stating the main result of this section we still need another fact, namely, a by now classical truncation lemma due to Acerbi \& Fusco \cite{AF}. The statement involves the Hardy-Littlewood maximal operator, defined as follows
\eqn{maximal-def}
$$
M(f)(x) := \sup_{B_r(x) \subset \er^n}\, \mean{B_r} |f(y)|\, dy\;,\quad x \in \er^n\;,
$$
whenever $f \in L^1_{\loc}(\er^n)$. We then have
\begin{theorem}[\cite{AF}]\label{thm:truncation}
Let $B \subseteq \er^n$ be a ball and $w \in W^{1,1}_0(B)$. Then for every $\lambda>0$ there exists $w_\lambda \in W^{1,\infty}_0(B)$ 
such that
\begin{equation}
\label{eqn:lip-w-lambda}
\| D w_{\lambda}\|_{L^\infty(B)} \leq c \lambda
\end{equation}
for some constant $c$ depending only on $n$. Moreover, it holds that 
\eqn{negligible}
$$\{ w_\lambda \neq w\} \subseteq B \cap \{ M(|\nabla w|) >\lambda\}\ \cup \ \mbox{negligible set}\;.
$$
\end{theorem}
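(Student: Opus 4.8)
The plan is to carry out the classical Lipschitz truncation based on the Hardy--Littlewood maximal operator; the only genuinely delicate point, and the place where the hypothesis $w\in W^{1,1}_0(B)$ (rather than merely $w\in W^{1,1}(B)$) is used, is to force the truncated function to vanish outside $B$. First I would extend $w$ by zero to all of $\er^n$, so that $w\in W^{1,1}(\er^n)$ and $w\equiv 0$ on $\er^n\setminus B$, and record the two standard pointwise estimates for the precise representative of $w$: there are a dimensional constant $c_n\geq 1$ and a set $N$ of measure zero, outside of which every point is a Lebesgue point of $w$, such that
\[
|w(x)-w(y)|\leq c_n|x-y|\bigl(M(|\nabla w|)(x)+M(|\nabla w|)(y)\bigr)\qquad\mbox{for all }x,y\in\er^n\setminus N,
\]
and $|w(x)-(w)_{B_r(x)}|\leq c_n\,r\,M(|\nabla w|)(x)$ for all $x\in\er^n\setminus N$ and $r>0$; both follow in the usual way by bounding $w-(w)_{B_r(x)}$ by the truncated Riesz potential of $|\nabla w|$ on $B_r(x)$ and telescoping over dyadic radii. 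I would then set
\[
F:=\bigl\{\,x\in\er^n\setminus N\,:\,M(|\nabla w|)(x)\leq\lambda\,\bigr\},
\]
which is closed modulo a null set since $M(|\nabla w|)$ is lower semicontinuous, and on which $w$ is Lipschitz with constant at most $2c_n\lambda$.

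The key step is to prove the boundary bound
\[
|w(x)|\leq c_n'\,\lambda\,\dist(x,\partial B)\qquad\mbox{for every }x\in F\cap B.
\]
To this end I would write $d:=\dist(x,\partial B)$ and exploit that, $B$ being a ball, the set $E:=B_{2d}(x)\setminus B$ has measure $|E|\geq\mu_n d^n$ for some dimensional $\mu_n>0$; since $w\equiv 0$ on $E$, the Poincar\'e inequality on $B_{2d}(x)$ together with $M(|\nabla w|)(x)\leq\lambda$ gives
\[
|(w)_{B_{2d}(x)}|\leq\frac{1}{|E|}\int_{B_{2d}(x)}\bigl|\,w-(w)_{B_{2d}(x)}\,\bigr|\,dy\leq\frac{|B_{2d}(x)|}{|E|}\,c_n\,d\,M(|\nabla w|)(x)\leq c_n'\,d\,\lambda,
\]
and combining this with the one-point estimate at $x$ of radius $2d$ yields the claim. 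Proving this inequality is \emph{the main obstacle}: the maximal-function control by itself produces a good Lipschitz representative of $w$ on $F$ but gives no information on supports, and it is precisely the zero boundary datum, felt through a ball straddling $\partial B$ on a definite portion of which $w$ vanishes, that forces $w$, hence the truncation, to be small near $\partial B$ and so lets its support be squeezed inside $\overline B$.

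Granting this, I would define $g:=w$ on $F$ and $g:=0$ on $\er^n\setminus B$, which is consistent since where $F$ meets $\er^n\setminus B$ the precise value of $w$ vanishes, and check that $g$ is Lipschitz on $\Sigma:=F\cup(\er^n\setminus B)$ with constant $c=c(n)$: for two points of $F$ this is the first pointwise estimate, for two points of $\er^n\setminus B$ it is trivial, and for $x\in F\cap B$ and $y\in\er^n\setminus B$ it follows from the boundary bound together with $|x-y|\geq\dist(x,\partial B)$. I would then take $w_\lambda$ to be the McShane extension
\[
w_\lambda(x):=\inf_{y\in\Sigma}\bigl(g(y)+c\lambda|x-y|\bigr),\qquad x\in\er^n,
\]
which is $c\lambda$-Lipschitz on $\er^n$ and coincides with $g$ on $\Sigma$; in particular $w_\lambda\equiv 0$ on $\er^n\setminus B$, so $w_\lambda$ is a Lipschitz function supported in $\overline B$ and vanishing on $\partial B$, hence $w_\lambda\in W^{1,\infty}_0(B)$, while by Rademacher's theorem $\|Dw_\lambda\|_{L^\infty(B)}\leq\operatorname{Lip}(w_\lambda)\leq c\lambda$ with $c=c(n)$, which is \trif{eqn:lip-w-lambda}. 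Finally, since $w_\lambda=g=w$ both on $F$ and on $\er^n\setminus B$, one gets $\{w_\lambda\neq w\}\subseteq B\setminus F=\bigl(B\cap\{M(|\nabla w|)>\lambda\}\bigr)\cup(B\cap N)$, which is exactly \trif{negligible} with $B\cap N$ as the negligible set.
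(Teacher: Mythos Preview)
The paper does not prove this theorem at all: it is quoted as a known result from Acerbi--Fusco \cite{AF} and used as a black box in the proof of Lemma~\ref{p-harm}. Your argument is the standard Lipschitz truncation (maximal-function control on the good set, the boundary bound $|w(x)|\leq c\lambda\,\dist(x,\partial B)$ obtained via a ball straddling $\partial B$, and a McShane extension from $F\cup(\er^n\setminus B)$), and it is correct; this is essentially the construction in \cite{AF}, so there is nothing to compare against the paper itself.
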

Note that, in view of the definition used in \rif{maximal-def}, in the above theorem we can assume $w$ to be defined on the whole $\er^n$ by setting $w\equiv 0$ outside $B$. 

We are now ready to state the main result of this section. We remark in advance that a crucial point is that all the constants must be independent of the number $a_0$ appearing in \rif{HH00}.
\begin{lemma}[Harmonic type approximation]\label{p-harm}
Let $B\equiv B_r  \subseteq \er^n$ be a ball, let $\sigma \in (0,1)$ and let $v \in W^{1,p}(2B)$ be a function satisfying the following estimates:
\begin{equation}\label{eqn:v-int00}
 \mean{2B}H_0(Dv) \, dx \leq \tilde c_1
\end{equation}
and 
\begin{equation}\label{eqn:v-int002}
\mean{B}\big[H_0(Dv)\big]^{1+\delta_0} \, dx \leq \tilde c_2\;,
\end{equation}
where $\tilde c_1, \tilde c_2\geq 1$ and $\delta_0 >0$ are fixed constants. Moreover, assume that 
\begin{equation}
\label{eqn:almost-p-harm}
\left|\mean{B}\big \langle A_0(Dv), D\varphi \big \rangle\, dx\right|\leq \sigma \|D\varphi\|_{L^\infty(B)} 
\qquad\quad \text{holds for all $\varphi \in C_0^\infty(B)$}\;.
\end{equation}
Then there exists a function 
$h  \in v+ W^{1,H_0}_0(B)$ such that the following conditions are satisfied:
\begin{equation}\label{p-har}
\mean{B}\big\langle A_0(Dh), D\varphi \big\rangle\, dx=0\qquad\quad \text{holds for all $\varphi \in C_0^\infty(B)$},
\end{equation}
\begin{equation}\label{eqn:v-int1}
 \mean{B}\big[H_0(Dh)\big]^{1+\delta_0} \, dx \leq c(n,p,q,\ratio, \delta_0)\,\tilde c_2\, ,
\end{equation}
\begin{equation}\label{ts:sob-dist-h-u}
\mean{B}\left(|V_p(Dv)-V_p(Dh)|^2+a_0 |V_q(Dv)-V_q(Dh)|^2\right) \, dx \leq c\, \sigma^{s_1}\;,
\end{equation}
and
\eqn{diffa-vh}
$$
\mean{B} \left(\left|\frac{v-h}{r}\right|^q+a_0\left|\frac{v-h}{r}\right|^q\right) \, dx \leq c\, \sigma^{s_0}\;.
$$
In \trif{ts:sob-dist-h-u} and \trif{diffa-vh} the dependence of constants involved is as follows: 
$s_1:=s_1(p,q,\delta_0)>0$, $s_0:=s_0(n,p,q,\delta_0)>0$ and $c\equiv c(n,p,q,\ratio, \delta_0,\tilde c_1,\tilde c_2)\geq 1$. 
\end{lemma}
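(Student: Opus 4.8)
The plan is to construct $h$ as the solution of the Dirichlet problem \trif{Dir00} with boundary datum $v$ on $B$, i.e.\ $h\in v+W^{1,H_0}_0(B)$ with $-\divo A_0(Dh)=0$; existence is guaranteed by the discussion preceding Theorem~\ref{cz0}. With this choice \trif{p-har} holds by definition, and \trif{eqn:v-int1} follows immediately from the Calder\'on--Zygmund estimate \trif{bordo} applied with $t=1+\delta_0$, using the hypothesis \trif{eqn:v-int002}: the constant produced by Theorem~\ref{cz0} depends only on $n,p,q,\ratio,\delta_0$ and, crucially, not on $a_0$. So the only real work is the two quantitative closeness estimates \trif{ts:sob-dist-h-u} and \trif{diffa-vh}.

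For \trif{ts:sob-dist-h-u} I would test the two equations with the admissible function $\varphi=v-h\in W^{1,H_0}_0(B)$. Subtracting gives
\begin{equation*}
\mean{B}\big\langle A_0(Dv)-A_0(Dh),\,D(v-h)\big\rangle\,dx=\mean{B}\big\langle A_0(Dv),\,D(v-h)\big\rangle\,dx.
\end{equation*}
The left-hand side controls $\mean{B}\big(|V_p(Dv)-V_p(Dh)|^2+a_0|V_q(Dv)-V_q(Dh)|^2\big)\,dx$ from below via the monotonicity property \trif{monotonicity-0}. To bound the right-hand side one cannot apply \trif{eqn:almost-p-harm} directly, since $v-h$ is only Sobolev, not $C_0^\infty$ with small Lipschitz norm; this is exactly where the Acerbi--Fusco truncation (Theorem~\ref{thm:truncation}) enters. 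Write $w:=v-h$, fix a level $\lambda>0$ to be chosen, and split $w=w_\lambda+(w-w_\lambda)$ where $\|Dw_\lambda\|_{L^\infty}\le c\lambda$ and $\{w\ne w_\lambda\}\subset\{M(|Dw|)>\lambda\}$. On the good part, approximate $w_\lambda$ by $C_0^\infty$ functions and use \trif{eqn:almost-p-harm} to get a term $\lesssim\sigma\lambda$. On the bad part $\{M(|Dw|)>\lambda\}$, use the growth bound $|A_0(Dv)|\,|Dv|\le LH_0(Dv)$ from \trif{ass0}$_1$ together with H\"older's inequality and the higher integrability \trif{eqn:v-int002}, \trif{eqn:v-int1}, plus the weak-$(1,1)$ bound for $M$, to estimate the measure $|\{M(|Dw|)>\lambda\}|$ and hence the contribution of the bad set by a negative power of $\lambda$ times $(\tilde c_1,\tilde c_2)$-dependent constants. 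Optimizing in $\lambda$ balances $\sigma\lambda$ against $\lambda^{-\kappa}$ and yields the power $\sigma^{s_1}$ with $s_1=s_1(p,q,\delta_0)$. The main obstacle is precisely this truncation argument: one must carefully track that all constants stay independent of $a_0$ (so the $H_0$-growth and the weak-type estimate must be used in an $a_0$-uniform way), and the bookkeeping of exponents in the H\"older splitting on the bad set is the delicate computational point.

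Finally, \trif{diffa-vh} follows from \trif{ts:sob-dist-h-u} by a Sobolev--Poincar\'e argument adapted to $H_0$. Since $v-h\in W^{1,H_0}_0(B)$, one has $\mean{B}|(v-h)/r|^p\,dx\lesssim\mean{B}|D(v-h)|^p\,dx$ and likewise with the $q$-power weighted by $a_0$, by the classical Sobolev inequality applied separately to the $p$- and $q$-gradient terms (using $\mean{B}H_0(D(v-h))\le c$, which itself comes from \trif{eqn:v-int00}, \trif{eqn:v-int1} and Young's inequality \trif{Young.eps}). Converting the $V_p$-, $V_q$-distances in \trif{ts:sob-dist-h-u} back into $|D(v-h)|^p$, $a_0|D(v-h)|^q$ via \trif{diffV}, interpolating with the higher integrability bounds \trif{eqn:v-int002}, \trif{eqn:v-int1}, and applying Sobolev--Poincar\'e gives the bound with a (smaller) power $\sigma^{s_0}$, $s_0=s_0(n,p,q,\delta_0)$. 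I would present \trif{ts:sob-dist-h-u} first and deduce \trif{diffa-vh} from it as a corollary within the same proof.
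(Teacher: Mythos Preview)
Your approach is correct and, for the main estimate \trif{ts:sob-dist-h-u}, is actually more direct than the paper's. The paper does not test with $w=v-h$; it tests the $h$-equation with the Lipschitz truncation $w_\lambda$ itself. Consequently the monotonicity inequality \trif{monotonicity-0} only yields a lower bound for $\mean{B}\mathcal V^2\chi_{\{w=w_\lambda\}}$, and the contribution of the bad set $\{w\neq w_\lambda\}$ must be recovered separately. This forces the paper through an intermediate $L^\theta$ estimate ($\theta<1$) for $\mathcal V$, followed by interpolation against the $L^{1+\delta_0}$ bound to reach $L^1$. Your route---testing the $h$-equation with $w$ so that monotonicity gives the full $\mean{B}\mathcal V^2$ directly, and splitting only the right-hand side $\mean{B}\langle A_0(Dv),Dw\rangle$ via $w=w_\lambda+(w-w_\lambda)$---avoids this detour and produces \trif{ts:sob-dist-h-u} in one step, with a comparable (in fact slightly larger) exponent $s_1$. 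The paper's intermediate $L^\theta$ bound is then reused in its Step~3 to obtain \trif{diffa-vh} through a sub-$L^1$ Sobolev--Poincar\'e inequality; your derivation of \trif{diffa-vh} straight from \trif{ts:sob-dist-h-u} via the ordinary Poincar\'e inequality together with \trif{diffV} (and a H\"older splitting when $p<2$ or $q<2$, using only the $L^1$ energy bounds) is again valid and shorter. Two small corrections to your sketch: the measure decay $|\{M(|Dw|)>\lambda\}|/|B|\le c[K_0(\lambda)]^{-(1+\delta_0)}$ needs the \emph{strong} $L^{p(1+\delta_0)}$ and $L^{q(1+\delta_0)}$ boundedness of the maximal operator, not merely the weak-$(1,1)$ estimate; and on the bad set you must pair $|A_0(Dv)|$ with $|D(w-w_\lambda)|\le |Dv|+|Dh|+c\lambda$, so beyond $|A_0(Dv)||Dv|\le LH_0(Dv)$ you also need the Young inequality \trif{Young.eps} and \trif{prima-prop} to control the cross terms $|A_0(Dv)||Dh|$ and $\lambda|A_0(Dv)|$.
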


\begin{proof} By a standard approximation argument we notice that, if \rif{eqn:almost-p-harm} holds for every  $\varphi \in C_0^\infty(B)$, then it holds also for every  $\varphi \in W_0^{1,\infty}(B)$. We then divide the proof in three steps.

{\em Step 1: Truncation.} We define $h  \in W^{1,p}(B)$ as in \rif{Dir00}. 
The standard energy estimate in this case (see \cite[Theorem 3.1]{CM1}) and  \rif{eqn:v-int00} give
\begin{equation}\label{energy.estimate}
 \mean{B}H_0(Dh) \, dx \leq   c \mean{B}H_0(Dh) \, dx \leq c(n,p,q,\ratio)\, \tilde c_1
\end{equation}
for $c\equiv c (n,p,q, \ratio)$.
Similarly, but using first \rif{bordo} and then \rif{eqn:v-int002}, we get
\eqn{eqn:v-int1-2}
$$
\mean{B} [H_0(Dh)]^{1+\delta_0}\, dx\leq c\mean{B} [H_0(Dv)]^{1+\delta_0}\, dx\leq c(n,p,q,\ratio, \delta_0)\, \tilde c_2\;,
$$
and this proves \rif{eqn:v-int1}. We now set $w:= v-h\in W^{1,H_0}_0(B)$ and we let $\lambda\geq 1$ to be chosen later; we consider $w_{\lambda}\in W^{1,\infty}_0(B)$ given by Theorem~\ref{thm:truncation}, which satisfies
\rif{eqn:lip-w-lambda} and \rif{negligible}. 
Using such properties, Chebyshev's inequality and finally the maximal theorem, we deduce that
\begin{eqnarray}\label{eqn:est-superlev-w}
\frac{|\{w\neq w_{\lambda}\}|}{|B|}& \leq & \frac{|B \cap \{ M(|D w|) >\lambda\}|}{|B|} \notag\\
&\leq & \frac{1}{[K_0(\lambda)]^{1+\delta_0}}  \mean{B} \big[K_0\big(|M(Dw)|\big)\big]^{1+\delta_0}\, dx \notag\\
&\leq & \frac{c(n,p,q, \delta_0)}{[K_0(\lambda)]^{1+\delta_0}}  \mean{B} \big[H_0(Dw)\big]^{1+\delta_0}\, dx \notag\\
&\leq &\frac{c}{[K_0(\lambda)]^{1+\delta_0}}  \left[\mean{B} \big[H_0(Dv)\big]^{1+\delta_0}\, dx+\mean{B} \big[H_0(Dh)\big]^{1+\delta_0}\, dx\right]\notag\\
&\leq &\frac{c \, \tilde c_2}{[K_0(\lambda)]^{1+\delta_0}}  
\end{eqnarray}
with $c\equiv c(n,p,q,\ratio, \delta_0)$ and in the last line we have used \rif{eqn:v-int1} and \rif{eqn:v-int1-2}. Now we test the weak formulation of \rif{Dir00}$_1$ with $w_{\lambda}$ to get 
\begin{eqnarray}
\notag \mathcal T_1 &:= &\mean{B}\big\langle A_0(Dv)- A_0(Dh) , Dw_{\lambda} \big\rangle \chi_{\{w= w_{\lambda}\}}\, dx
\\
\notag &= & \mean{B}\big\langle A_0(Dv) , Dw_{\lambda} \big\rangle\, dx\\ 
&& \  -\mean{B}\big\langle A_0(Dv)- A_0(Dh)  , Dw_{\lambda} \big\rangle \chi_{\{w\neq w_{\lambda}\}}\, dx
=:\mathcal T_2+\mathcal T_3\;.\label{combiT}
\end{eqnarray}
Next, we estimate each term in the previous equality. By \rif{monotonicity-0} we have
$$
\mathcal T_1\geq\frac1c\mean{B}  \left(|V_p(Dv)-V_p(Dh)|^2 +a_0|V_q(Dv)-V_q(Dh)|^2 \right)\chi_{\{w= w_{\lambda}\}}\, dx 
$$
for $c\equiv c(n,p,q,\nu)$. 
Using \rif{eqn:almost-p-harm} and then \rif{eqn:lip-w-lambda} we have
$$|\mathcal T_2| \leq \sigma \| Dw_{\lambda}\|_{L^\infty(B)} \leq c \,\sigma\lambda\,.$$
Finally, for $\mathcal T_3$, we fix $\varepsilon\in(0,1)$ to be chosen and estimate
\begin{eqnarray*}
|\mathcal T_3| &\leq &
\mean{B}\left(|A_0(Dh)|+|A_0(Dv)|  \right)|Dw_{\lambda}| \chi_{\{w\neq w_{\lambda}\}} \, dx\\
&\stackleq{ass0} & L\| D w_{\lambda}\|_{L^\infty(B)}\mean{B}\left[\frac{H_0(Dv)}{|Dv|}+\frac{H_0(Dh)}{|Dh|}\right]  \chi_{\{w\neq w_{\lambda}\}}\, dx\\
&\stackrel{\rif{HH00}}{=} & L\| D w_{\lambda}\|_{L^\infty(B)}\mean{B}\left[\frac{K_0(|Dv|)}{|Dv|}+\frac{K_0(|Dh|)}{|Dh|}\right]  \chi_{\{w\neq w_{\lambda}\}}\, dx\\
&\stackleq{Young.eps} & \eps \mean{B}\left[\widetilde{K_0}\left(\frac{K_0(|Dv|)}{|Dv|}\right)+
\widetilde{K_0}\left(\frac{K_0(|Dh|)}{|Dh|}\right)\right] dx\\ && \quad +\frac{cK_0\left( \| D w_{\lambda}\|_{L^\infty(B)}\right)}{\eps^{q-1}}\,\frac{|\{w\neq w_{\lambda}\}|}{|B|}\\
&\stackrel{\rif{prima-prop},\rif{eqn:lip-w-lambda}}{\leq} & \varepsilon\mean{B}\left[H_0(Dv)+H_0(Dh)\right]\, dx+\frac{cK_0(\lambda)}{\eps^{q-1}}\,\frac{|\{w\neq w_{\lambda}\}|}{|B|}\\
&\stackrel{\rif{eqn:est-superlev-w}}{\leq} & \varepsilon\mean{B}\left[H_0(Dv)+H_0(Dh)\right]\, dx+\frac{c}{[K_0(\lambda)]^{\delta_0}\eps^{q-1}}\;.\end{eqnarray*}
By further using \rif{eqn:v-int00}, \rif{energy.estimate} and that $K_0(\lambda) \geq \lambda ^p$, we conclude with
$$
\mathcal T_3 \leq c\,\varepsilon+\frac{c}{\lambda^{p\delta_0}\eps^{q-1}}\;.
$$
Here it is $c\equiv c(n,p,q,\ratio,\tilde c_1,\tilde c_2, \delta_0)$. Merging the estimates for $\mathcal T_1,\mathcal T_2$ and $\mathcal T_3$ with \rif{combiT}, we deduce that
\begin{eqnarray}
\notag && \mean{B}  \left(|V_p(Dv)-V_p(Dh)|^2 +a_0|V_q(Dv)-V_q(Dh)|^2 \right)\chi_{\{w= w_{\lambda}\}}\, dx\\
&& \quad \leq c\,\left[\sigma\lambda+\varepsilon+\frac{1}{\lambda^{p\delta_0}\eps^{q-1}}\right]=:c\,S(\sigma,\lambda,\varepsilon) \label{CS}
\end{eqnarray}
for a constant $c$ depending on $n,p,q,\ratio,\tilde c_1,\tilde c_2, \delta_0$ and $\eps \in (0,1)$ has still to be chosen. Let us use  the short notation
\eqn{dedev}
$$
\mathcal V^2:=|V_p(Dv)-V_p(Dh)|^2 +a_0|V_q(Dv)-V_q(Dh)|^2
$$
and fix $\theta \in (0,1)$, again to be chosen. H\"older's inequality and \rif{CS} give
\begin{equation}\label{eqn:w-coinc}
\left(\mean{B} \mathcal V^{2\theta} \chi_{\{w= w_{\lambda}\}}\, dx \right)^{1/\theta}
\leq c \, S(\sigma,\lambda,\varepsilon)\;. 
\end{equation}
Again using H\"older inequality, we estimate as
\begin{eqnarray*}
&&\hspace{-5mm}\left(\mean{B}  \mathcal V^{2\theta}\chi_{\{w\neq w_{\lambda}\}}\, dx \right)^{1/\theta}
\leq c\left(\frac{|\{w\neq w_{\lambda}\}|}{|B|} \right)^{\frac{1-\theta}{\theta}}  \mean{B}  \mathcal V^{2}\, dx \\
&& \qquad \qquad \stackleq{eqn:est-superlev-w}  c\,[K_0(\lambda)]^{-\frac{(1-\theta)(1+\delta_0)}{\theta}}\mean{B} \left[H_0(Dv)+H_0(Dh)\right] \, dx\\ && \qquad  \quad\stackrel{\rif{eqn:v-int00},\rif{energy.estimate}}{\leq}c\,[K_0(\lambda)]^{-\frac{1-\theta}{\theta}} \leq  c\lambda^{-\frac{p(1-\theta)}{\theta}}\;,
\end{eqnarray*}
for a constant $c\equiv c (n,p,q,\ratio, \tilde c_1,\tilde c_2, \delta_0)$; notice we have used that $\lambda \geq 1$ and that $K_0(\lambda)\geq \lambda^p$. 
The inequalities in the last two displays give, after some manipulation
$$\left(\mean{B}  \mathcal V^{2\theta}\, dx \right)^{1/\theta}\leq c 
\,S(\sigma,\lambda,\varepsilon)+c\,{\lambda^{-\frac{p(1-\theta)}{\theta}}}\;,$$
and again $c\equiv c (n,p,q,\ratio, \tilde c_1,\tilde c_2, \delta_0)$. 
Recalling the definition of $S(\sigma,\lambda,\varepsilon)$ in \rif{CS}, and choosing 
$\lambda=\sigma^{-1/2}$ and $ \varepsilon=\sigma^{p\delta_0/[4(q-1)]}$, the previous inequality implies that
\eqn{eqn:sob-dist-u-h}
$$
\left(\mean{B}   \left(|V_p(Dv)-V_p(Dh)|^2 +a_0|V_q(Dv)-V_q(Dh)|^2\right)^{\theta}\, dx \right)^{1/\theta}\leq  c\,\sigma^{m}
$$
for $c:=c(n,p,q,\tilde c_1,\tilde c_2, \delta_0,\theta)$, where
\eqn{emme}
$$m\equiv m(\theta):= \min\left\{\frac{1}{2}, \frac{p\delta_0}{4}, \frac{p\delta_0}{4(q-1)}, \frac{p(1-\theta)}{2\theta} \right\}\;.$$
Recall that $\theta \in (0,1)$ is yet to be chosen. 

{\em Step 2: Proof of \trif{ts:sob-dist-h-u}.}
We show how to deduce \rif{ts:sob-dist-h-u} from \rif{eqn:sob-dist-u-h} by properly choosing $\theta \in (0,1)$. H\"older's inequality with conjugate exponents $2(1+\delta_0)/(1+2\delta_0)$ and $2(1+\delta_0)$ yields
\eqn{using-with}
$$
\mean{B} \big|\mathcal V\big|^2\, dx\leq \left(\mean{B}\big|\mathcal V\big|^{\frac{2(1+\delta_0)}{1+2\delta_0}}\, dx\right)^{\frac{1+2\delta_0}{2(1+\delta_0)}}\left(\mean{B}\big|\mathcal V\big|^{2(1+\delta_0)}\, dx\right)^{\frac1{2(1+\delta_0)}}\;.
$$
We now chose $\theta$ as 
\eqn{iltheta}
$$\theta\equiv \theta(\delta_0):=\frac{1+\delta_0}{1+2\delta_0}\in(0,1)\;.$$ This allows to estimate
\[
 \left(\mean{B}\big|\mathcal V\big|^{\frac{2(1+\delta_0)}{1+2\delta_0}}\, dx\right)^{\frac{1+2\delta_0}{2(1+\delta_0)}}\leq c\,\sigma^{m/2}.
\]
On the other hand, recalling \rif{dedev}, we simply note that
\begin{align*}
 \mean{B}\big|\mathcal V\big|^{2(1+\delta_0)}\, dx&=\mean{B}\Big(\big|V_p(Dh)-V_p(Dv)\big|^2 +a_0\big|V_q(Dh)-V_q(Dv)\big|^2\Big)^{1+\delta_0}\, dx\\
&\leq c\,\mean{B}\big[H_0(Dv)\big]^{1+\delta_0}\, dx+c\, \mean{B}\big[H_0(Dh)\big]^{1+\delta_0}\, dx\\
&\leq c(n,p,q,\ratio, \delta_0,\tilde c_2)\;,
\end{align*}
where we have used \rif{eqn:v-int1} and \rif{eqn:v-int1-2} in the last line. Combining the content of the last display with \rif{using-with} and recalling \rif{emme} yields \rif{ts:sob-dist-h-u} with 
$$s_1:= \frac{m(\theta(\delta_0))}{2}\equiv \frac 12\min\left\{\frac{1}{2}, \frac{p\delta_0}{4}, \frac{p\delta_0}{4(q-1)}, \frac{p\delta_0}{2(1+\delta_0)} \right\}\;,$$ that in fact exhibits the announced dependence on $p,q$ and $\delta_0$.

{\em Step 3: Proof of \trif{diffa-vh}.} We again use \trif{eqn:sob-dist-u-h} and show how this implies \rif{diffa-vh} for a suitable choice of $\theta \in (0,1)$. If $q \geq 2$, by \rif{diffV} we see that taking $\theta \equiv \theta_q= \max\{{n}/({n+q}), 1/q\}$
\begin{eqnarray}
\notag a_0\mean{B} \left|\frac{v-h}{r}\right|^q \, dx   &\leq &c\, a_0\left(\mean{B} |Dv-Dh|^{q\theta}\, dx \right)^{1/\theta}
\\ &\leq & c\, \left(\mean{B} \left(a_0|V_q(Dv)-V_q(Dh)|^{2}\right)^\theta \, dx\right)^{1/\theta}  \leq c \sigma^{m(\theta_q)}
\label{comequi2}
\end{eqnarray}
for a constant $c$ depending only on $n,p$ and $\theta$. When $q<2$, we instead argue as follows:
\begin{eqnarray}
\nonumber && \hspace{-7mm}a_0\mean{B} \left|\frac{v-h}{r}\right|^q \, dx\leq a_0\left(\mean{B} |Du-D  h|^{q\theta}\, dx \right)^{1/\theta}
\\ \notag&&\stackleq{diffV} ca_0\left(\mean{B} \left(|V_q(Du)-V_q(D h)|^q (|Du|+|D \tilde h|)^{q(2-q)/2}\right)^{\theta} \, dx \right)^{1/\theta}
\nonumber 
\\
&& \, \, \leq c\left(\mean{B} \left(a_0|V_q(Du)-V_q(D h)|^2\right)^{\theta}\, dx\right)^{q/(2\theta)}\notag \\ && \qquad \cdot\left(\mean{B} \left(a_0(|Du|+|D h|)^{q}\right)^{\theta} \, dx\right)^{(2-q)/(2\theta)}\leq c \sigma^{m(\theta_q)q/2}\label{comequi3}\;,\end{eqnarray}
where in the last line we have used also \rif{eqn:v-int00} and \rif{eqn:v-int1}. Summarizing, we have  
$$
a_0\mean{B} \left|\frac{v-h}{r}\right|^q \, dx \leq c\sigma^{m(\theta_q)\min\{1, q/2\}}\;.
$$
In a completely similar way we can estimate 
$$
 \mean{B} \left|\frac{v-h}{r}\right|^p \, dx \leq c\,\sigma^{m(\theta_p)\min\{1, p/2\}}\;,
$$
 for $\theta_p:= \max\{{n}/({n+p}), 1/p\}$, and the proof of \rif{diffa-vh} follows for a suitable exponent $s_0= m(\theta_p)\min\{1, p/2\}$, with the dependence described in the statement. 
  \end{proof}
Beside the above quantitative harmonic approximation, we report the following more standard version, which is close to the ones described in \cite{DSV, DM}. 
\begin{lemma}\label{p-harm2}
Let $B \equiv B_r  \subseteq \er^n$ be a ball, land let $v \in W^{1,p}(B)$ be a function satisfying 
$$ \mean{B}H_0(Dv) \, dx \leq \tilde c_1$$
for some $\tilde c_1\geq 1$. Fix $\tilde \eps \in (0,1]$. There exists $\sigma \equiv \sigma (n,p,q,\ratio, \tilde \eps)$, but otherwise independent of $a_0$, such that if \trif{eqn:almost-p-harm} is satisfied, then there exists 
$h  \in v+ W^{1,H_0}_0(B)$ such that \trif{p-har} holds together with the estimates
\begin{equation}\label{eqn:v-int1-dopo}
 \mean{B}H_0(Dh)\, dx \leq c(n,p,q,\ratio)\,\tilde c_1
\end{equation}
and
\eqn{diffa-vh-dopo}
$$
\mean{B} \left(\left|\frac{v-h}{r}\right|^p+a_0\left|\frac{v-h}{r}\right|^q\right) \, dx \leq \tilde \eps\;.
$$
\end{lemma}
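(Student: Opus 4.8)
\textbf{Proof plan for Lemma \ref{p-harm2}.}

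The plan is to deduce the qualitative approximation of Lemma \ref{p-harm2} from the quantitative one (Lemma \ref{p-harm}) by a standard compactness/contradiction argument, thereby bypassing the need for the higher integrability bound \trif{eqn:v-int002}. First I would fix $\tilde\eps\in(0,1]$ and argue by contradiction: suppose that for every $j\in\en$ there exists a ball $B^j\equiv B_{r_j}$, a vector field $A_0^j$ satisfying \trif{ass0} with some frozen coefficient $a_0^j\geq 0$ and exponents $1<p<q$ (fixed throughout), and a map $v_j\in W^{1,p}(B^j)$ with $\mean{B^j}H_0^j(Dv_j)\,dx\leq \tilde c_1$ and
\[
\left|\mean{B^j}\big\langle A_0^j(Dv_j), D\varphi\big\rangle\, dx\right|\leq \frac1j\,\|D\varphi\|_{L^\infty(B^j)}\qquad\text{for all }\varphi\in C_0^\infty(B^j)\,,
\]
but such that no $A_0^j$-harmonic replacement $h_j\in v_j+W^{1,H_0^j}_0(B^j)$ satisfies both \trif{eqn:v-int1-dopo} (which is automatic by the energy estimate, so the real content is the quantitative part) and \trif{diffa-vh-dopo} with the given $\tilde\eps$. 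After rescaling so that $r_j=1$ and $B^j=B$, I would then let $h_j$ be the solution of the Dirichlet problem \trif{Dir00} with datum $v_j$ and vector field $A_0^j$: this is well defined and, by the energy estimate quoted in the proof of Lemma \ref{p-harm} (i.e.\ \cite[Theorem 3.1]{CM1}), satisfies $\mean{B}H_0^j(Dh_j)\,dx\leq c(n,p,q,\ratio)\tilde c_1$, so \trif{eqn:v-int1-dopo} holds; the point is to show \trif{diffa-vh-dopo} must hold for $j$ large.

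The second step is to estimate the difference $v_j-h_j$ directly, without invoking higher integrability. Testing the equation for $h_j$ against a truncation $w_{j,\lambda}\in W^{1,\infty}_0(B)$ of $w_j:=v_j-h_j$ produced by Theorem \ref{thm:truncation}, one gets, exactly as in the decomposition \trif{combiT} in the proof of Lemma \ref{p-harm},
\[
\mathcal T_1=\mathcal T_2+\mathcal T_3\,,
\]
where now $\mathcal T_2$ is bounded by $c\sigma_j\lambda$ with $\sigma_j=1/j\to 0$. The only difference is the bound on $\mathcal T_3$ and on $|\{w_j\neq w_{j,\lambda}\}|$: in the absence of the $L^{1+\delta_0}$-bound one controls the superlevel set of $M(|Dw_j|)$ only by the weak-type $(1,1)$ maximal inequality, giving $|\{w_j\neq w_{j,\lambda}\}|/|B|\leq c\,[K_0(\lambda)]^{-1}\mean{B}H_0^j(Dw_j)\,dx\leq c/K_0(\lambda)$; this still tends to $0$ as $\lambda\to\infty$, uniformly in $j$, and is all one needs here since no polynomial rate is required. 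Running the same Young-inequality estimate for $\mathcal T_3$ as in Lemma \ref{p-harm} yields $|\mathcal T_3|\leq c\eps+c/(\lambda^{p}\eps^{q-1})$ for the quantities actually needed (the exponent $\delta_0$ in Lemma \ref{p-harm} is replaced here by the exponent $0$ coming from the weak-type bound, but one still gets a term $c/K_0(\lambda)^{\text{(something positive)}}$; more simply, one can use the absolute continuity of the integral of $H_0^j(Dv_j)+H_0^j(Dh_j)$ over the vanishing set $\{w_j\neq w_{j,\lambda}\}$). Combining, and using the monotonicity \trif{monotonicity-0} to bound $\mathcal T_1$ from below, gives
\[
\mean{B}\Big(|V_p(Dv_j)-V_p(Dh_j)|^2+a_0^j|V_q(Dv_j)-V_q(Dh_j)|^2\Big)\chi_{\{w_j=w_{j,\lambda}\}}\,dx\leq c\Big(\tfrac{\lambda}{j}+\eps+\tfrac{1}{\lambda^{p}\eps^{q-1}}\Big)\,,
\]
and then, splitting off the coincidence set via Hölder exactly as in Step 1 of the proof of Lemma \ref{p-harm}, one arrives at
\[
\left(\mean{B}\Big(|V_p(Dv_j)-V_p(Dh_j)|^2+a_0^j|V_q(Dv_j)-V_q(Dh_j)|^2\Big)^{\theta}\,dx\right)^{1/\theta}\leq c\Big(\tfrac{\lambda}{j}+\eps+\tfrac{1}{\lambda^{p}\eps^{q-1}}+\lambda^{-p(1-\theta)/\theta}\Big)
\]
for any fixed $\theta\in(0,1)$. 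Choosing $\eps$ small, then $\lambda$ large, then $j$ large makes the right-hand side arbitrarily small; finally, exactly as in Step 3 of the proof of Lemma \ref{p-harm} (the estimates \trif{comequi2}--\trif{comequi3}, which only use \trif{eqn:v-int00} and the energy bound \trif{eqn:v-int1-dopo}, \emph{not} the higher integrability), this controls $\mean{B}(|(v_j-h_j)/r|^p+a_0^j|(v_j-h_j)/r|^q)\,dx$ and forces it below $\tilde\eps$ for $j$ large, contradicting the choice of $v_j$. This shows that a threshold $\sigma\equiv\sigma(n,p,q,\ratio,\tilde\eps)$, independent of $a_0$ and of $r$, exists.

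The main obstacle, as in the quantitative version, is keeping every constant \emph{independent of the frozen coefficient $a_0$}: this is what makes the Calderón--Zygmund-type and energy estimates from \cite{CM1, CM3} and the monotonicity inequality \trif{monotonicity-0} (all stated with $a_0$-independent constants) indispensable, and it is the reason the truncation argument must be carried out with the unified Young function $H_0$ and its conjugate $\widetilde{K_0}$ rather than treating the $|z|^p$ and $a_0|z|^q$ pieces separately with $a_0$-dependent interpolation. A secondary technical point is the scaling reduction to $r=1$: one checks that both the hypothesis \trif{eqn:almost-p-harm} and the conclusions \trif{eqn:v-int1-dopo}--\trif{diffa-vh-dopo} are invariant under the natural rescaling $x\mapsto x_0+rx$, $v\mapsto v(x_0+rx)$ (with the coefficient $a_0$ unchanged and $A_0$ unchanged, since the equation $-\operatorname{div}A_0(Dh)=0$ is scale invariant), so it suffices to prove the statement on the unit ball, which is what the compactness argument above does.
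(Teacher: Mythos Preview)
Your overall template is the right one — the direct truncation approach of Diening--Stroffolini--Verde, adapted via the manipulations of Lemma~\ref{p-harm}, which is exactly what the paper's sketch suggests. But there is a genuine gap at the estimate for $\mathcal T_3$.

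You claim that ``running the same Young-inequality estimate for $\mathcal T_3$ as in Lemma~\ref{p-harm} yields $|\mathcal T_3|\leq c\eps+c/(\lambda^{p}\eps^{q-1})$''. That is not what one obtains without higher integrability. The Young-inequality step in Lemma~\ref{p-harm} produces
\[
|\mathcal T_3|\;\leq\; c\eps \;+\; \frac{cK_0(\lambda)}{\eps^{q-1}}\cdot\frac{|\{w\neq w_\lambda\}|}{|B|}\,,
\]
and it is precisely the higher-integrability bound \rif{eqn:est-superlev-w}, i.e.\ $|\{w\neq w_\lambda\}|/|B|\leq c[K_0(\lambda)]^{-(1+\delta_0)}$, that supplies the extra decaying factor $[K_0(\lambda)]^{-\delta_0}$. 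With only the weak-type $(1,1)$ estimate you invoke, $|\{w\neq w_\lambda\}|/|B|\leq c/K_0(\lambda)$, the two powers of $K_0(\lambda)$ cancel \emph{exactly} and one is left with $|\mathcal T_3|\leq c\eps+c/\eps^{q-1}$, carrying no $\lambda$-decay whatsoever. Your parenthetical assertion that ``one still gets a term $c/K_0(\lambda)^{\text{(something positive)}}$'' is therefore incorrect: the exponent is zero.

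Your fallback — absolute continuity of $\int H_0^j(Dv_j)+H_0^j(Dh_j)$ over the vanishing set $\{w_j\neq w_{j,\lambda}\}$ — would give what you want for a \emph{fixed} $v$, at the cost of making the resulting threshold $\sigma$ depend on $v$ through its modulus of integrability. In your contradiction setup the functions $v_j$ vary, and the sole information available is a uniform $L^1$ bound on $H_0^j(Dv_j)$; this is strictly weaker than the uniform integrability needed to make $\mean{B}H_0^j(Dv_j)\chi_{\{w_j\neq w_{j,\lambda}\}}$ small \emph{uniformly in $j$} as $\lambda\to\infty$. Equivalently, read as a direct argument, your proof yields $\sigma=\sigma(v)$ rather than $\sigma=\sigma(n,p,q,\nu,L,\tilde\eps)$, contrary to the statement.

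The paper's own proof is only a sketch deferring to \cite{DSV, DM}. The substantive point — which your write-up does not resolve — is that whatever argument is imported from those references must deliver the $L^\theta$-type bound \rif{comequi1} with constants depending only on the $\Delta_2/\nabla_2$ characteristics of $K_0$ (these depend on $p,q$ alone, not on $a_0$), so that $\sigma$ ends up independent of $a_0$ and of the particular $v$. How the tail coming from $\mathcal T_3$ is controlled in the absence of higher integrability is not spelled out in the paper either; filling that in carefully is the real work, and the shortcut you propose does not do it.
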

\begin{proof} This is an approximation lemma of the type already developed in \cite{DSV, DM} and the proof can be obtained as in these papers, with some modifications from the proof of Lemma \ref{p-harm}. This leads to establish that, actually, for every choice of $\tilde \eps, \theta \in (0,1) $, there exists $\sigma \equiv \sigma (n,p,q,\ratio, \tilde \eps, \theta)$ such that if \trif{eqn:almost-p-harm} holds, then there exists 
$h  \in v+ W^{1,H_0}_0(B)$ as in the statement of Lemma \ref{p-harm2}, and such that
\begin{equation}\label{comequi1}
\mean{B}\left(|V_p(Dv)-V_p(Dh)|^2+a_0 |V_q(Dv)-V_q(Dh)|^2\right)^{\theta} \, dx \leq \sigma\;.
\end{equation}
Once this is available, then \rif{diffa-vh-dopo} can be deduced from \rif{comequi1} as done using \rif{eqn:sob-dist-u-h} in Step 3 from the proof of Lemma \ref{p-harm}.
\end{proof}

\section{Two phases}\label{lefasi} From now, in the rest of the proof of Theorems \ref{main1} and \ref{main2}, we consider a fixed open subset $\Omega_0\Subset \Omega$; accordingly, we consider $\datao$ as defined in \rif{idatio}. By the basic results of Theorem \ref{HI} and \ref{HO}, we know $H(\cdot,Du) \in L^{1+\delta}_{\loc}(\Omega)$ 
and $u \in C^{0,\gamma}_{\loc}(\Omega)$ hold for higher integrability and H\"older continuity exponents $\delta >0$ and $\gamma \in(0,1)$, depending on $\data$ and $\datao$, respectively (see in fact \rif{basicexp}-\rif{basicexp2}). Notice that in the case we are assuming \rif{bound2}, then $\gamma$ only depends on $\data$ as defined in \rif{idati}. Moreover, the following estimate holds:
\eqn{stimelocali}
$$
[u]_{0, \gamma;\Omega_0}+\|H(\cdot,Du)\|_{L^{1+\delta}(\Omega_0)} \leq c\left(\datao\right)
$$
as a consequence of \rif{riversa}, \rif{basichol} and of standard covering arguments. To proceed, in the following we shall consider a number $s$ such that
\eqn{esseemme}
$$
0\leq s < s_m :=
\left\{
\begin{array}{cc}
\displaystyle \alpha-\frac{(q-p)n}{p}+\frac{n\delta(q-p)}{p(1+\delta)}& \mbox{if \rif{bound1} is considered and $p>n$}  \\[13 pt]
p+\alpha -q + \gamma(q-p) & \mbox{if \rif{bound2} is considered\;.}
\end{array}
\right.
$$ 
Needless to say, \rif{bound1}-\rif{bound2} imply that $s_m>0$, and in the limiting equality cases too; moreover, it is $s<s_m\leq \alpha$. 

Next, given a ball $B\equiv B_{R}\Subset \Omega$, $R\leq 1$, and a number $M\geq 1$, and with
\eqn{applica0}
$$
x_B \in \bar B\,,  \quad  a_{{\rm i}}(B):=a(x_B) =\min_{\bar B} \, a(x) \;,
$$ we define the $p$-phase in $B_{R}$ as the occurrence of the following inequality:
\eqn{applica-p}
$$
a_{{\rm i}}(B_{R}) \leq M[a]_{0,\alpha}R^{\alpha-s}\;.$$ 
The $(p,q)$-phase is instead defined by the complementary condition, i.e.,  
\eqn{applica}
$$
a_{{\rm i}}(B_{R}) > M[a]_{0,\alpha}R^{\alpha-s}\;.$$ 
\begin{remark}\label{propaga} 
We record an immediate consequence of \rif{applica-p}, that is
\eqn{condiadopo}
$$
a_{{\rm i}}(B_{R}) \leq M[a]_{0,\alpha}R^{\alpha-s} \Longrightarrow \|a\|_{L^\infty(B_{R})} \leq 3M [a]_{0,\alpha}R^{\alpha-s}\;.
$$
Indeed, observe that 
$$
\|a\|_{L^\infty(B_{R})}\leq a_{{\rm i}}(B_R) + [a]_{0,\alpha}(2R)^\alpha \leq M [a]_{0,\alpha}R^{\alpha-s}+[a]_{0,\alpha}(2R)^{\alpha}\leq 3M [a]_{0,\alpha}R^{\alpha-s}.
$$
\end{remark}
\begin{remark}\label{propaga2}
We observe that if the condition \rif{applica} 
holds in $B_{R}\equiv B_R(x_0)$, then it also holds on all smaller concentric balls $B_{\varrho}(x_0)$, that is
\eqn{ognir}
$$
a_{{\rm i}}(B_{R}(x_0)) > M[a]_{0,\alpha}R^{\alpha-s} \Longrightarrow a_{{\rm i}}(B_\varrho(x_0)) 
> M[a]_{0,\alpha}\varrho^{\alpha-s}
\qquad \forall \ \varrho \leq  R\;.
$$ Indeed we have 
$a_{{\rm i}}(B_\varrho(x_0)) \geq a_{{\rm i}}(B_{R}(x_0))
>M[a]_{0,\alpha} R^{\alpha-s}\geq M[a]_{0,\alpha} \varrho^{\alpha-s}.$
\end{remark}
\begin{remark}\label{propaga4} We notice the property
\eqn{ult}
$$a_{{\rm i}}(B_{R}) > M[a]_{0,\alpha}R^{\alpha-s} \Longrightarrow \|a\|_{L^{\infty}(B_R)} \leq [2/M+1] a_{{\rm i}}(B_R)$$
Indeed, 
$
a(x)\leq a(x)-a_{{\rm i}}(B_R) + a_{{\rm i}}(B_R) \leq 2[a]_{0, \alpha}R^\alpha + a_{{\rm i}}(B_R)\leq [2/M+1] a_{{\rm i}}(B_R).
$
\end{remark}
We gather the following consequence of Proposition \ref{cacc-gen}:
\begin{lemma}\label{cacc-gen2} Let $ u \in W^{1,p}(\Omega)$ be a local minimiser of the functional $\mathcal F$ defined in \trif{funF}, under the assumptions \trif{growtH}. Moreover, assume that either \trif{bound1} or \trif{bound2} is satisfied. Consider a ball $B_{R} \Subset \Omega_0\Subset \Omega$, with $R \leq 1$, such that 
\trif{applica-p} holds for some $s$ as in \trif{esseemme} and some $M\geq 1$. Then 
\eqn{cacc1}
$$
\mean{B_{t}} H(x,Du)\, dx \leq c \left(\frac R{s-t}\right)^q\mean{B_{s}}\left|\frac{u-u_0}{R}\right|^p\, dx
$$ 
holds for a constant $c\equiv c (\datao, M)$, whenever $B_t \Subset B_s\Subset B_{R}$ 
are concentric balls and for every $u_0\in \er$ such that $\inf u \leq u_0 \leq \sup u$.
Similarly, the inequality
\eqn{cacc3--}
$$ \int_{B_{t}} H(x,D(u-k)_{\pm})\, dx \leq  
c \left(\frac R{s-t}\right)^q\int_{B_{s}} \left|\frac{(u-k)_\pm}{s-t}\right|^p\, dx  
$$
holds for every $k \in \er$ such that $\inf u \leq k \leq \sup u$, where $c\equiv c (n,p,q, \ratio, M)$. Here $(u-k)_+$ and $(u-k)_-$ are defined as in \trif{letroncate}. 
\end{lemma}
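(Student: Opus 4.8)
The plan is to derive both inequalities from the Caccioppoli inequalities \rif{cacc0} and \rif{cacc3---} of Proposition \ref{cacc-gen}, by absorbing the degenerate term $a(x)|z|^{q}$ in $K(x,|z|)$ into the $p$-term $|z|^{p}$; the $p$-phase condition \rif{applica-p} is exactly what permits this. First I would rewrite the integrand on the right-hand side of \rif{cacc0}, taking $z=(u-u_{0})/(s-t)$, as
$$
K\left(x,\left|\frac{u-u_{0}}{s-t}\right|\right)=\left(\frac{R}{s-t}\right)^{p}\left|\frac{u-u_{0}}{R}\right|^{p}+\left(\frac{R}{s-t}\right)^{q}a(x)\left|\frac{u-u_{0}}{R}\right|^{q-p}\left|\frac{u-u_{0}}{R}\right|^{p}\;,
$$
and note that $R/(s-t)>1$ since $B_{t}\Subset B_{s}\Subset B_{R}$. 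Consequently, once one has the pointwise bound
$$
a(x)\left|\frac{u-u_{0}}{R}\right|^{q-p}\leq c(\datao,M)\qquad\text{on } B_{R}\;,
$$
inserting it into \rif{cacc0}, bounding $(R/(s-t))^{p}\leq (R/(s-t))^{q}$ and dividing by $|B_{t}|$ gives \rif{cacc1}; the same computation, performed on \rif{cacc3---} with $(u-k)_{\pm}$ in place of $u-u_{0}$, gives \rif{cacc3--}.

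Hence the whole content of the lemma is the pointwise bound displayed above. Since $u_{0}$ lies between $\inf_{B_{R}}u$ and $\sup_{B_{R}}u$, one has $|u-u_{0}|\leq \osc_{B_{R}}u$ on $B_{R}$; and Remark \ref{propaga} turns the $p$-phase condition \rif{applica-p} into $\|a\|_{L^{\infty}(B_{R})}\leq 3M[a]_{0,\alpha}R^{\alpha-s}$. Combining the two,
$$
a(x)\left|\frac{u-u_{0}}{R}\right|^{q-p}\leq 3M[a]_{0,\alpha}\,R^{\alpha-s}\left(\frac{\osc_{B_{R}}u}{R}\right)^{q-p}\;,
$$
so everything is reduced to bounding $\osc_{B_{R}}u$ by the appropriate power of $R$. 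This is where the two cases of \rif{esseemme} split, and it is the only genuinely non-routine step.

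In the case \rif{bound2} --- the one to be used when $p<n$, see Section \ref{setsec} --- Theorem \ref{HO} and estimate \rif{stimelocali} give $u\in C^{0,\gamma}_{\loc}$ with $[u]_{0,\gamma;\Omega_{0}}\leq c(\datao)$, so $\osc_{B_{R}}u\leq c(\datao)R^{\gamma}$, and the last display is $\leq c(\datao,M)R^{\alpha-s-(1-\gamma)(q-p)}$; since $s<s_{m}=p+\alpha-q+\gamma(q-p)=\alpha-(1-\gamma)(q-p)$ the exponent is nonnegative, and as $R\leq 1$ the quantity is $\leq c(\datao,M)$. In the case \rif{bound1} with $p>n$, Theorem \ref{HI} gives $H(\cdot,Du)\in L^{1+\delta}_{\loc}$, hence $Du\in L^{p(1+\delta)}_{\loc}$ with $p(1+\delta)>n$; then \rif{stimelocali} (or the reverse Hölder inequality \rif{riversa}) together with Morrey's embedding gives $\osc_{B_{R}}u\leq c(\datao)R^{1-n/(p(1+\delta))}$, and the last display becomes $\leq c(\datao,M)R^{\alpha-s-n(q-p)/(p(1+\delta))}$; since $s<s_{m}=\alpha-n(q-p)/(p(1+\delta))$ and $R\leq 1$ this is again $\leq c(\datao,M)$. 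The two formulas for $s_{m}$ in \rif{esseemme} are precisely calibrated so that these exponents of $R$ are nonnegative; this is why the thresholds $s<s_{m}$, and by continuity the limiting equality cases in \rif{bound1}--\rif{bound2}, are exactly what is needed.

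This proves the pointwise bound, hence \rif{cacc1}; for \rif{cacc3--} one repeats the argument verbatim, now using $(u-k)_{\pm}\leq \osc_{B_{R}}u$ on $B_{R}$ (valid because $\inf_{B_{R}}u\leq k\leq \sup_{B_{R}}u$) and absorbing $(R/(s-t))^{q-p}\leq (R/(s-t))^{q}$. I expect the main obstacle to be precisely the oscillation estimate in the super-critical regime $p>n$: one must feed in the higher integrability of $H(\cdot,Du)$ from Theorem \ref{HI} and then check that the Morrey exponent $1-n/(p(1+\delta))$ it produces is consistent with the definition of $s_{m}$ in \rif{esseemme}. Everything else is bookkeeping with the phase condition, the Caccioppoli inequality, and the smallness $R\leq 1$.
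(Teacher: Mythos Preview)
Your proposal is correct and follows essentially the same route as the paper's proof: start from the Caccioppoli inequalities \rif{cacc0}--\rif{cacc3---}, factor out $(R/(s-t))^{q}$, and reduce everything to the pointwise bound $a(x)|u-u_{0}|^{q-p}R^{p-q}\leq c$, which is then verified in the two phases via Remark~\ref{propaga} combined with either the H\"older estimate \rif{stimelocali} (case \rif{bound2}) or Sobolev--Morrey after the higher integrability of Theorem~\ref{HI} (case \rif{bound1}, $p\geq n$). Your algebraic check that the resulting exponent of $R$ equals $s_{m}-s>0$ in both cases is exactly the computation the paper carries out.
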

\begin{proof} We give the proof of \rif{cacc3--}, the one of \rif{cacc1} being completely similar. An easy manipulation of \rif{cacc3---}, which holds in every case/phase, gives 
\begin{eqnarray}
\notag &&\int_{B_{t}}H(x, D(u-k)_\pm)\, dx\\ && \qquad  \leq 
c\left(\frac R{s-t}\right)^q\int_{B_{s}}\left(1+a(x)\left|\frac{(u-k)_\pm}{R}\right|^{q-p}\right)\left|\frac{(u-k)_\pm}{R}\right|^{p}\, dx\;.\label{manimani}
\end{eqnarray}
To proceed, we distinguish between the two cases, \rif{bound1} and \rif{bound2}. For \rif{bound2}, recalling that $\inf u \leq k \leq \sup u$ and using Remark \ref{propaga}, we can estimate
\eqn{manimani2}
$$
a(x)\left|\frac{(u-k)_\pm}{R}\right|^{q-p} \leq cM [a]_{0, \alpha} [u]_{0, \gamma;B_{R}}^{q-p}R^{p+\alpha -q + \gamma(q-p)-s} \stackleq{stimelocali} c \;,
$$ 
where $c\equiv c(\datao,M)$ and we have used the relation in \rif{esseemme}. 
Combining the inequalities in the last two displays yields \rif{cacc3--}. The we consider \rif{bound1}. By the discussion in Section \ref{setsec} we restrict to the case $p\geq n$. Indeed, by Sobolev-Morrey theorem, using again Remark \ref{propaga}, we have that 
\begin{eqnarray*}
a(x)\left|\frac{(u-k)_\pm}{R}\right|^{q-p} &\leq& 3M [a]_{0, \alpha}R^{p-q+\alpha-s}[\osc_{B_r}\, u]^{q-p}\\
&\leq &cR^{p-q+\alpha-s}\|Du\|_{L^{p(1+\delta)}(B_{R})}^{q-p}R^{\left[1-\frac{n}{p(1+\delta)}\right](q-p)}\\
&\leq & c\|H(\cdot, Du)\|_{L^{1+\delta}(B_{R})}^{q/p-1}R^{\alpha-\frac{(q-p)n}{p}+ \frac{n\delta(q-p)}{p(1+\delta)}-s} \stackleq{stimelocali} c\;,
\end{eqnarray*}
for $c\equiv c(\datao,M)$. 
Combining this last inequality with \rif{manimani} yields \rif{cacc3--} once again and the proof is complete. 
\end{proof}
\begin{remark}\label{stupido00} Estimates \rif{cacc1} and \rif{cacc3--} continue to hold if, instead of assuming \rif{bound1} or \rif{bound2}, we consider \rif{bound3}. In this case, instead of \rif{esseemme}, we always take $s_m= p+\alpha -q+\gamma(q-p)$. The validity of \rif{cacc3--} (and similarly of \rif{cacc1}) then follows by first writing \rif{manimani} and then observing than \rif{manimani2} still holds under assumptions \rif{bound3}.
\end{remark}

\section{$p$-phase}\label{fasep} In the setting specified in the previous section, we shall here exploit the estimates implied by condition \rif{applica-p}, where $B_{R} \Subset \Omega_0\Subset \Omega$ with $R\leq 1$, and $s$ is a number satisfying \rif{esseemme}. We define, whenever $u \in W^{1,1}(\Omega)$ is a local minimizer of \rif{funF} and $w \in W^{1,1}(B_{R})$
\eqn{frolast}
$$
\mathcal G_{B}(w):=   \int_{B_{R}} F(x, (u)_{B_{R}}, Dw) \, dx\;,$$
and $\tilde v \in u + W^{1,p}_0(B)$ as the solution to the variational Dirichlet problem:
\eqn{var-dir1}
$$
\left\{
    \begin{array}{c}
   \displaystyle  \tilde v \mapsto \min_w
\, \mathcal G_{B}(w)    
\\[8pt]
        w \in u + W^{1,p}_0(B_{R})\;. 
\end{array}\right.
$$ 
We then have the following preliminary lemma, that essentially serves to get rid of the $u$-dependence in the original functional $\mathcal F$ in \rif{funF}. 
\begin{lemma}\label{pqcomp-p} Let $ u \in W^{1,p}(\Omega)$ be a local minimiser of the functional $\mathcal F$ defined in \trif{funF}, under assumptions \trif{growtH}, \trif{assF} and \trif{convex-omega-2}. Assume that either \trif{bound1} or \trif{bound2} is satisfied. Let $B_{R}\Subset \Omega_0\Subset \Omega$ be a ball with $R\leq 1$. 
Then there exists a positive constant $c\equiv c\left(\datao\right)$ such that then the following estimate holds:
\begin{eqnarray}
\nonumber &&\mean{B_{R}} \left( |V_p(Du)-V_p(D\tilde v)|^2+ a(x) |V_q(Du)-V_q(D\tilde v)|^2\right)\, dx\\
&&\quad  \leq c\omega(R^\gamma) \mean{B_{R}} H(x, Du) \, dx\;, 
\label{comp11}
\end{eqnarray}
where $\gamma\equiv \gamma (\datao)\in (0,1)$ is the H\"older continuity exponent fixed in \trif{stimelocali}. Moreover, the following inequalities hold:
\eqn{infinito}
$$
\|\tilde v\|_{L^{\infty}(B_{R})}\leq \|u\|_{L^{\infty}(B_{R})}\;,\qquad \mean{B_{R}}H(x, D\tilde v )\, dx\leq  \frac L\nu \mean{B_{R}}H(x, Du)\, dx
$$
and 
\eqn{stimaosc}
$$
\osc_{B_{R}}\, \tilde v \leq \osc_{B_{R}}\, u\;. 
$$
Finally, for every $\eps \in (0,1)$, the estimate
\eqn{comp11-ancora}
$$
\mean{B_{R}} \left|\frac{u-\tilde v}{R}\right|^p\,  dx\leq c\, 
\left[c_\eps\, \omega(R^\gamma) + \eps\right] \mean{B_{R}}H(x, Du)\, dx 
$$
holds for $c$ depending on $\datao$ and $c_\eps$ depending only on $\eps, p$ and $q$. 
\end{lemma}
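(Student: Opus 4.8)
The strategy is to compare $u$ with $\tilde v$ via the minimality of both, exploiting the fact that $\tilde v$ minimizes the \emph{frozen} functional $\mathcal G_B$ while $u$ minimizes $\mathcal F$. The estimates \rif{infinito} and \rif{stimaosc} come essentially for free: since $F(x,(u)_{B_R},\cdot)$ is convex in the gradient variable (by \rif{assF}$_2$) and truncations decrease $\mathcal G_B$, replacing $\tilde v$ by its truncation between $\inf_{B_R} u$ and $\sup_{B_R} u$ would not increase the energy, so by uniqueness of the minimizer $\tilde v$ already satisfies the $L^\infty$ and oscillation bounds; the energy comparison $\int_{B_R} H(x,D\tilde v)\,dx \le (L/\nu)\int_{B_R} H(x,Du)\,dx$ follows from $\nu\int H(x,D\tilde v) \le \mathcal G_B(\tilde v)\le \mathcal G_B(u)\le L\int H(x,Du)$ using \rif{growtH}.

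\textbf{The key comparison \rif{comp11}.} I would write the standard monotonicity chain: by \rif{monotonicity-0}-type arguments (here the analogue coming from \rif{assF}$_2$, which gives exactly the lower ellipticity $\nu(|z|^{p-2}+a(x)|z|^{q-2})|\xi|^2 \le \langle\partial^2 F\,\xi,\xi\rangle$), the left-hand side of \rif{comp11} is controlled by
$$
\frac1c\mean{B_R}\big\langle \partial_z F(x,(u)_{B_R},Du)-\partial_z F(x,(u)_{B_R},D\tilde v),\, Du-D\tilde v\big\rangle\,dx\;.
$$
Now $\tilde v$ being a solution of the Euler--Lagrange equation for $\mathcal G_B$ kills the $D\tilde v$ term when tested against $u-\tilde v \in W^{1,p}_0(B_R)$ (one must first check $u-\tilde v$ is an admissible test function, using $H(\cdot,Du),H(\cdot,D\tilde v)\in L^1$), leaving
$$
\mean{B_R}\big\langle \partial_z F(x,(u)_{B_R},Du),\, Du-D\tilde v\big\rangle\,dx\;.
$$
On the other hand, $u$ satisfies the Euler--Lagrange equation for $\mathcal F$, i.e. $\mean{B_R}\langle\partial_z F(x,u,Du),Du-D\tilde v\rangle\,dx = 0$ (again modulo admissibility and possibly differentiability issues — but note $\mathcal F$ need not be differentiable, so here one should instead compare energies: use minimality of $u$ to write $\mathcal F(u,B_R)\le \mathcal F(\tilde v,B_R)$ and then quantify the gap). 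The difference between the two vector fields is then controlled by assumption \rif{assF}$_4$: $|\partial_z F(x,u,Du)-\partial_z F(x,(u)_{B_R},Du)|\,|Du|\le L\omega(|u-(u)_{B_R}|)H(x,Du)\le L\omega(\osc_{B_R} u)H(x,Du) \le L\,\omega(c\,R^\gamma)\,H(x,Du)$ by the H\"older estimate \rif{stimelocali}/\rif{localstimahol}; since $\omega$ is concave with $\omega(0)=0$, $\omega(cR^\gamma)\le c\,\omega(R^\gamma)$. Then Young's inequality (splitting $|Du-D\tilde v|$ against $H(x,Du)^{1/2}$, absorbing the $\mathcal V^2$ term into the left side) yields \rif{comp11}. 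The cleanest route, avoiding differentiability of $F$ in $v$, is to run the whole comparison at the level of the functionals: bound $\mathcal F(u,B_R)-\mathcal G_B(u)$ and $\mathcal G_B(\tilde v)-\mathcal F(\tilde v,B_R)$ by $\int_{B_R}\omega(|u-(u)_{B_R}|)H(x,\cdot)\,dx$ using \rif{assF}$_4$, combine with $\mathcal F(u,B_R)\le\mathcal F(\tilde v,B_R)$ and $\mathcal G_B(\tilde v)\le\mathcal G_B(u)$, and convert the resulting energy gap into the $\mathcal V^2$-estimate via the uniform convexity encoded in \rif{assF}$_2$.

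\textbf{From \rif{comp11} to \rif{comp11-ancora}.} Having \rif{comp11}, I would estimate $\mean{B_R}|(u-\tilde v)/R|^p\,dx$ by Sobolev--Poincar\'e: $u-\tilde v\in W^{1,p}_0(B_R)$, so $\mean{B_R}|(u-\tilde v)/R|^p\,dx\le c\mean{B_R}|Du-D\tilde v|^p\,dx$. The point is to pass from $\mathcal V^2 = |V_p(Du)-V_p(D\tilde v)|^2 + a(x)|V_q(Du)-V_q(D\tilde v)|^2$ to $|Du-D\tilde v|^p$. When $p\ge 2$ one has $|Du-D\tilde v|^p \le c|V_p(Du)-V_p(D\tilde v)|^2$ directly by \rif{diffV}. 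When $p<2$ one uses \rif{diffV} together with H\"older's inequality: $|Du-D\tilde v|^p \lesssim |V_p(Du)-V_p(D\tilde v)|^p(|Du|+|D\tilde v|)^{p(2-p)/2}$, then split the integral, bounding the second factor by $\int(|Du|+|D\tilde v|)^p\le c\int H(x,Du)$ (using \rif{infinito}). This produces $\mean{B_R}|(u-\tilde v)/R|^p\,dx\le c\,[\omega(R^\gamma)]^{\min\{1,p/2\}}\mean{B_R}H(x,Du)\,dx$; since $[\omega(R^\gamma)]^{\min\{1,p/2\}}\le c_\eps\omega(R^\gamma)+\eps$ by Young's inequality with exponents $1/\min\{1,p/2\}$ and its conjugate (applied to the product $[\omega(R^\gamma)]^{\min\{1,p/2\}}\cdot 1$), we obtain exactly \rif{comp11-ancora} with $c_\eps$ depending only on $\eps,p,q$.

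\textbf{Main obstacle.} The delicate point is handling the non-differentiability of $F$ in the $v$-variable: one cannot simply write the Euler--Lagrange equation for $\mathcal F$ and test it. The fix is to do everything with \emph{energy} comparisons, using $\mathcal F(u,B_R)\le\mathcal F(\tilde v,B_R)$ and the minimality of $\tilde v$ for $\mathcal G_B$, and to extract the $\mathcal V^2$-quantity from the resulting energy gap through the quantitative strong convexity of $F(x,(u)_{B_R},\cdot)$ guaranteed by \rif{assF}$_2$ — i.e. a lemma of the form $\mathcal G_B(w)-\mathcal G_B(\tilde v)\ge c^{-1}\int \mathcal V^2$ for $w\in\tilde v+W^{1,p}_0$, which is standard given \rif{assF}$_2$ and \rif{monotonicity}. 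The secondary technical nuisance is the admissibility of $u-\tilde v$ as a competitor and test function, which requires knowing $H(\cdot,Du),H(\cdot,D\tilde v)\in L^1(B_R)$ — both available by hypothesis and by \rif{infinito}.
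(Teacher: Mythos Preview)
Your proposal is correct and follows essentially the same route as the paper. The paper also works at the level of energies: it uses the convexity inequality $|V_p(z_2)-V_p(z_1)|^2+a(x)|V_q(z_2)-V_q(z_1)|^2+c\langle\partial F(x,(u)_{B_R},z_1),z_2-z_1\rangle\le c[F(x,(u)_{B_R},z_2)-F(x,(u)_{B_R},z_1)]$ together with the Euler--Lagrange equation for $\tilde v$ to bound the left side of \rif{comp11} by $\mathcal G_B(u)-\mathcal G_B(\tilde v)$, then telescopes this into four pieces $I_1,\ldots,I_4$ (your three-term splitting is equivalent), using \rif{assF}$_4$ and the minimality of $u$ exactly as you describe. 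Two small points: for the term $\mathcal G_B(\tilde v)-\mathcal F(\tilde v,B_R)$ you need to control $\omega(|\tilde v-(u)_{B_R}|)$, not $\omega(|u-(u)_{B_R}|)$, and this is where \rif{stimaosc} is actually used (the paper splits this further via $(\tilde v)_{B_R}$); and for \rif{comp11-ancora} when $p<2$ the paper applies Young's inequality \emph{before} inserting \rif{comp11}, getting $c_\eps\mean|V_p(Du)-V_p(D\tilde v)|^2+\eps\mean H(x,Du)$ directly, which is marginally cleaner than your route through $[\omega(R^\gamma)]^{p/2}$ but equivalent.
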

\begin{proof} The Euler-Lagrange equation of the functional $\mathcal F_B$, that is 
\eqn{el1}
$$
\mean{B_{R}} \big\langle  \partial F(x, (u)_{B_{R}}, D\tilde v), D\varphi \big\rangle \, dx=0\;,$$
is now satisfied for any choice  $\varphi \in W^{1,1}_0(B_R)$ such that $H(x, D\varphi)\in L^1(B_R)$ (see also \cite[Remark 6.1]{CM3}). On the other hand, we notice that 
by minimality and growth conditions we also have that $H(x, D\tilde v)\in L^1(B_R)$:
\begin{eqnarray}
\notag \mean{B_{R}}H(x, D\tilde v )\, dx &\stackleq{growtH} & \frac1\nu \mean{B_{R}}F(x, (u)_{B_{R}}, D\tilde v)\, dx \\
&\leq & \frac1\nu  \mean{B_{R}}F(x, (u)_{B_{R}}, Du)\, dx \leq \frac L\nu \mean{B_{R}}H(x, Du)\, dx\;.\label{coerc}
\end{eqnarray}
This proves the second inequality in \trif{infinito}. 
Therefore we conclude with
\eqn{el1-p}
$$
\mean{B_{R}} \big\langle  \partial F(x, (u)_{B_{R}}, D \tilde v), Du-D\tilde v \big\rangle \, dx=0\;.$$
Moreover, let us recall two basic consequences of \rif{assF}$_2$. Using the content of \cite[(4.13)]{KM1}, and arguing in a totally similar way to get that lemma, we this time have that 
\begin{eqnarray}
&&|V_p(z_2)-V_p(z_1)|^2+ a(x) |V_q(z_2)-V_q(z_1)|^2\notag\\
&& \quad +c\big \langle  \partial F(x, (u)_{B_{R}}, z_1), z_2-z_1\big\rangle \leq c\left[ 
F(x, (u)_{B_{R}}, z_2)-F(x, (u)_{B_{R}}, z_1)\right]\label{el2}
\end{eqnarray}
holds for every choice of $z_1, z_2\in \er^n$ and $x \in \Omega$, for a constant $c$ depending only on $n,p,q,\nu$. 
By using \rif{el1}, \rif{el2} and the minimality of $u$ and $\tilde v$, we then find
\begin{eqnarray}
&&\notag \hspace{-5mm}  \mean{B_{R}} \left(|V_p(Du)-V_p(D\tilde v)|^2+ a(x) |V_q(Du)-V_q(D\tilde v)|^2\right) \, dx\\ 
&&\notag \,  \stackrel{\rif{el1-p}}{=}  \mean{B_{R}} \left(|V_p(Du)-V_p(D\tilde v)|^2+ a(x) |V_q(Du)-V_q(D\tilde v)|^2\right) \, dx \\
&&\notag \qquad +  c\mean{B_{R}} \big \langle  \partial F(x, (u)_{B_{R}}, D\tilde v), Du-D\tilde v\big \rangle \, dx \\
&& \notag \stackleq{el2} c\mean{B_{R}} \left[ 
F(x, (u)_{B_{R}}, Du)-F(x, (u)_{B_{R}}, D\tilde v)\right] \, dx \\
&& \notag \ \, =   c\mean{B_{R}} \left[ 
F(x, (u)_{B_{R}}, Du)-F(x, u, Du)\right] \, dx \\
&& \notag \qquad + c\mean{B_{R}} \left[ 
F(x, u, Du)-F(x, \tilde v, D\tilde v)\right] \, dx \\
&& \notag \qquad +  c\mean{B_{R}} \left[ 
F(x, \tilde v, D\tilde v)-F(x, (\tilde v)_{B_{R}}, D\tilde v)\right] \, dx \\
&& \qquad +  c\mean{B_{R}} \left[ 
F(x, (\tilde v)_{B_{R}}, D\tilde v)-F(x, (u)_{B_{R}}, D\tilde v)\right] \, dx =: c \sum_{i=1}^4 I_i\;.\label{connecting}
\end{eqnarray} 
We can now proceed estimating the terms $I_1, \dots, I_4$. We have
\begin{eqnarray}
\notag I_1 &\stackleq{assF} & c \mean{B_{R}}\omega\left(|u-(u)_{B_{R}}|\right)H(x, Du)\, dx \\
\notag &\stackleq{applica} & c\omega\left(2[u]_{0, \gamma;\Omega_0}R^\gamma\right) \mean{B_{R}}H(x, Du)\, dx\\
&\stackleq{basichol} &c(\datao)\,\omega (R^\gamma) \mean{B_{R}}H(x, Du)\, dx\;, \notag
\end{eqnarray}
where we have also used that $\omega(\cdot)$ is concave. 
On the other hand, the minimality of $u$ yields $I_2\leq 0$. As for $I_3$, we start proving 
\rif{stimaosc}. 
Indeed, observe that, with $k := \sup_{B_{R}} \, u$, the minimality of $\tilde  v$ yields
$
\mathcal G_{B}(\tilde v)  \leq \mathcal G_{B}(\min\{\tilde v,k\}) 
$
and this, together with \rif{growtH}, in turn gives 
$$
\int_{\{\tilde v \geq k\} \cap B} H(x, D\tilde v) \, dx =0\;.
$$
By coarea formula we then get $\tilde v \leq k$ a.e. Arguing similarly, but with the choice $k := \inf_{B_{R}} \, u$, and using that 
$
\mathcal G_B(\tilde v)  \leq \mathcal G_B(\max\{\tilde v,k\}), 
$
we get that $\tilde v \geq k$ a.e. and \rif{stimaosc} follows. This also proves the first inequality in \rif{infinito}. We therefore have 
\begin{eqnarray*}
\notag I_3 &\stackleq{assF} & c \mean{B_{R}}\omega\left(|\tilde v-(\tilde v)_{B_{R}}|\right)H(x, D\tilde v)\, dx \\
&\stackleq{stimaosc} & c\, \omega \left(\osc_{B_{R}} \, u\right)\mean{B_{R}}H(x, D\tilde v)\, dx \\
&\stackleq{coerc} & c\omega\left(2[u]_{0, \gamma;\Omega_0}R^\gamma\right) \mean{B_{R}}H(x, Du)\, dx\\
&\stackleq{basichol} &c\omega (R^\gamma) \mean{B_{R}}H(x, Du)\, dx\;, \notag
\end{eqnarray*}
where $c\equiv c (\datao)$. 
In a similar way, as $u\equiv \tilde v$ on $\partial B$, we first observe that 
$$ \left|(\tilde v)_{B_{R}} -(u)_{B_{R}}\right|^p\, dx \leq  c \mean{B_{R}}|\tilde v -u|^p\, dx \stackleq{stimaosc} c\left(\osc_{B_{R}}\, u\right)^p$$
and then, as already done for $I_3$, we still find
$$
I_4 \leq c\omega (R^\gamma) \mean{B_{R}}H(x, Du)\, dx\;.
$$
Connecting the estimates found for the terms $I_1, \ldots, I_4$ to \rif{connecting} completes the proof of \rif{comp11}. 
It remains to prove \rif{comp11-ancora}. 
For this we distinguish two cases; the first is when $p\geq 2$; then by Poincar\'e inequality we have 
$$
\notag \mean{B_R} \left|\frac{u-\tilde v}{R}\right|^p \, dx   \leq c\mean{B_R} |Du-D\tilde v|^{p}\, dx \leq  c\mean{B_R}|V_p(Du)-V_p(D\tilde v)|^{2}\, dx \;.
$$
In this case \rif{comp11-ancora} follows combining the inequalities in the last two displays with \rif{comp11}. The case $p<2$ is slightly more elaborate. Using also Young's inequality with conjugate exponents $(2/p, 2/(2-p))$, we have
\begin{eqnarray*}
\nonumber && \hspace{-5mm}\mean{B_R} \left|\frac{u-\tilde v}{R}\right|^p \, dx\leq \mean{B_R} |Du-D  \tilde v|^{p}\, dx \\ \notag&&\stackleq{diffV} c\mean{B_R} \left(|V_p(Du)-V_p(D  \tilde v)|^p (|Du|+|D \tilde v|)^{p(2-p)/2} \right)\, dx
\nonumber 
\\
&&  \, \,  \leq c\left(\mean{B_R} |V_p(Du)-V_p(D \tilde v)|^2\, dx\right)^{p/2}\notag \left(\mean{B_R} (|Du|+|D \tilde v|)^{p} \, dx\right)^{(2-p)/2}\\
&&  \stackleq{infinito} c\left(\mean{B_R} |V_p(Du)-V_p(D \tilde v)|^2\, dx\right)^{p/2}\notag \left(\mean{B_R} [H(x, Du)] \, dx\right)^{(2-p)/2}\\
&&  \ \leq c_\eps \, \mean{B_R} |V_p(Du)-V_p(D \tilde v)|^2\, dx + \eps \mean{B_R} [H(x, Du)] \, dx\;.
\end{eqnarray*}
Once again \rif{comp11-ancora} follows combining the last inequality with \rif{comp11}. 
\end{proof}
To proceed, we gather some regularity information on $\tilde v$. 
\begin{lemma}[Controlled transfer of regularity]\label{auxv} Under the assumptions of Lemma \ref{pqcomp-p}, let $\tilde v  \in W^{1,p}(B_{R})$ be defined as in \trif{var-dir1} and assume that \trif{applica-p} holds in $B_{R}$ with some $s$ as in \trif{esseemme} and some $M\geq 1$.  
Then the following inequality holds for a constant $c\equiv c(\datao,M)$
\eqn{aux-sup}
$$
\sup_{B_{R/4}}\, |\tilde v-(\tilde v)_{B_{R/2}}|^p \leq c \mean{B_{R/2}}|\tilde v-(\tilde v)_{B_{R/2}}|^p\, dx\;,
$$
while the following ones 
\eqn{holder-cont}
$$
[\tilde v]_{0, \gamma;B_{R/2}} \leq c [u]_{0, \gamma;B_{R}}\;, \quad \|H(\cdot, D\tilde v)\|_{L^{1+\delta}(B_{R/2})}\leq c
\|H(\cdot, Du)\|_{L^{1+\delta}(B_{R})}
$$
hold for constants $c\equiv c(\datao)$. In \trif{holder-cont} the exponents $\gamma\equiv \gamma (\datao)$ and $\delta\equiv \delta (\data)$ are those fixed in \trif{stimelocali}. 
\end{lemma}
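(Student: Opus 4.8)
The three estimates are proved in turn, all of them resting on the fact that $\tilde v$ is itself a local minimiser of the functional $\mathcal G_{B}$, which is of the type \rif{funF} with integrand $(x,z)\mapsto F(x,(u)_{B_{R}},z)$ obeying \rif{growtH} with the same constants $\nu,L$ and the same coefficient $a(\cdot)$; moreover, whichever of \rif{bound1}, \rif{bound2} is assumed for $u$ keeps being available for $\tilde v$, since by \rif{infinito} one has $\|\tilde v\|_{L^{\infty}(B_{R})}\leq \|u\|_{L^{\infty}(B_{R})}$ and $\mean{B_{R}}H(x,D\tilde v)\,dx\leq (L/\nu)\mean{B_{R}}H(x,Du)\,dx$. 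Hence Proposition \ref{cacc-gen}, Theorem \ref{HI} and Theorem \ref{HO} all apply to $\tilde v$ on sub-balls of $B_{R}$, and throughout the comparison estimates \rif{infinito}, \rif{stimaosc} together with the global bounds \rif{stimelocali} for $u$ are used to keep every constant under control by $\datao$ (and $M$).

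\emph{Proof of \rif{aux-sup}.} The plan is to produce, in the $p$-phase, a Caccioppoli inequality of pure $p$-type on the super/sub-level sets of $\tilde v$, and then to run the classical De Giorgi iteration. Starting from the level-set inequality \rif{cacc3---} for $\tilde v$ (valid in any phase) and arguing as in the proof of Lemma \ref{cacc-gen2} (see also Remark \ref{stupido00}), the term $a(x)|(\tilde v-k)_{\pm}/R|^{q-p}$ on the right-hand side is absorbed for every level $k$ with $\inf_{B_{R}}\tilde v\leq k\leq \sup_{B_{R}}\tilde v$: indeed \rif{condiadopo} gives $\|a\|_{L^{\infty}(B_{R})}\leq 3M[a]_{0,\alpha}R^{\alpha-s}$ and $(\tilde v-k)_{\pm}\leq \osc_{B_{R}}\,\tilde v\leq \osc_{B_{R}}\,u$ by \rif{stimaosc}, so the bounds \rif{stimelocali} for $u$ (bounding $\osc_{B_{R}}\,u$, via the Morrey inequality when \rif{bound1} is considered) together with the relation $s<s_{m}$ of \rif{esseemme} force $a(x)|(\tilde v-k)_{\pm}/R|^{q-p}\leq c(\datao,M)$. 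One thus obtains
\[
\int_{B_{t}}|D(\tilde v-k)_{\pm}|^{p}\,dx\leq c\Big(\frac{R}{s-t}\Big)^{q}\int_{B_{s}}\Big|\frac{(\tilde v-k)_{\pm}}{s-t}\Big|^{p}\,dx
\]
for all concentric $B_{t}\Subset B_{s}\Subset B_{R}$ and all admissible $k$, with $c\equiv c(\datao,M)$. Feeding this into the De Giorgi iteration on levels increasing to $\sup_{B_{R/2}}\tilde v$ and radii decreasing to $R/4$ (all such levels lying in $[\inf_{B_{R}}\tilde v,\sup_{B_{R}}\tilde v]$, and the resulting geometric factor $2^{jq}$ being harmless) yields $\sup_{B_{R/4}}(\tilde v-(\tilde v)_{B_{R/2}})_{+}\leq c\big(\mean{B_{R/2}}|\tilde v-(\tilde v)_{B_{R/2}}|^{p}\,dx\big)^{1/p}$; the symmetric estimate on $(\tilde v-(\tilde v)_{B_{R/2}})_{-}$ then gives \rif{aux-sup}.

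\emph{Proof of \rif{holder-cont}.} For the first inequality I apply Theorem \ref{HO} to $\tilde v$ as a minimiser of $\mathcal G_{B}$: $\tilde v$ is Hölder continuous inside $B_{R}$ and the oscillation decay \rif{localstimahol} holds on concentric sub-balls of $B_{R/2}$; since by \rif{infinito} the quantities $\|\tilde v\|_{L^{\infty}(B_{R})}$, $\|H(\cdot,D\tilde v)\|_{L^{1}(B_{R})}$ controlling the exponent in \rif{basicexp}--\rif{basicexp2} are dominated by the corresponding ones for $u$, and that exponent is monotone in them, the decay holds with the very $\gamma$ of \rif{stimelocali} and a constant $c(\datao)$. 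Using \rif{localstimahol} with radii $R/4$ and $|x-y|$ and then \rif{stimaosc} one gets, for $x,y\in B_{R/2}$ with $|x-y|\leq R/4$,
\[
|\tilde v(x)-\tilde v(y)|\leq \osc_{B_{|x-y|}(x)}\,\tilde v\leq c\Big(\frac{|x-y|}{R}\Big)^{\gamma}\osc_{B_{R}}\,\tilde v\leq c\Big(\frac{|x-y|}{R}\Big)^{\gamma}\osc_{B_{R}}\,u\leq c\,[u]_{0,\gamma;B_{R}}|x-y|^{\gamma},
\]
while for $|x-y|>R/4$ one has directly $|\tilde v(x)-\tilde v(y)|\leq \osc_{B_{R}}\,\tilde v\leq \osc_{B_{R}}\,u\leq c\,[u]_{0,\gamma;B_{R}}|x-y|^{\gamma}$; hence $[\tilde v]_{0,\gamma;B_{R/2}}\leq c\,[u]_{0,\gamma;B_{R}}$. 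For the second inequality I apply Theorem \ref{HI} to $\tilde v$ (which satisfies whichever of \rif{bound1}, \rif{bound2} is in force): the reverse Hölder inequality \rif{riversa} holds for $\tilde v$ on every $B_{\varrho}\Subset B_{R}$ with $\varrho\leq 1$, and — diminishing the higher-integrability exponent if necessary and controlling it through \rif{infinito} — with the same $\delta$ of \rif{stimelocali} and a constant $c(\datao)$. Covering $B_{R/2}$ by finitely many balls $B_{R/8}(x_{i})$, $x_{i}\in B_{R/2}$, so that $B_{R/4}(x_{i})\Subset B_{R}$, applying \rif{riversa} on each $B_{R/4}(x_{i})$, bounding $\mean{B_{R/4}(x_{i})}H(x,D\tilde v)\,dx\leq c\mean{B_{R}}H(x,D\tilde v)\,dx\leq c\mean{B_{R}}H(x,Du)\,dx$ by \rif{infinito} and then $\leq c\big(\mean{B_{R}}[H(x,Du)]^{1+\delta}\,dx\big)^{1/(1+\delta)}$ by Jensen's inequality, and finally summing over $i$, one arrives at $\int_{B_{R/2}}[H(x,D\tilde v)]^{1+\delta}\,dx\leq c\int_{B_{R}}[H(x,Du)]^{1+\delta}\,dx$, which is the second estimate in \rif{holder-cont}.

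The only genuinely delicate point is guaranteeing that the Hölder and higher-integrability exponents produced for $\tilde v$ are (at least) the ones, $\gamma$ and $\delta$, already fixed for $u$ in \rif{stimelocali} — this is exactly what allows the right-hand sides of \rif{holder-cont} to be written with those same exponents — and it is handled by the monotone dependence of these exponents on the quantities controlled in \rif{infinito} (together with the elementary fact that a reverse Hölder inequality self-improves down to all smaller exponents). Everything else is a routine application of the De Giorgi iteration and of the first regularity results of Section 3; one should only keep in mind that $\tilde v$ lives on $B_{R}$ alone, so all those statements are used on sub-balls and \rif{infinito}, \rif{stimaosc} are invoked to keep the constants depending only on $\datao$ (and $M$).
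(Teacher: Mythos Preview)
Your proof is correct and follows essentially the same strategy as the paper: for \rif{aux-sup} you derive the $p$-type level-set Caccioppoli inequality via Lemma \ref{cacc-gen2} and \rif{infinito}--\rif{stimaosc}, then run the De Giorgi iteration (the paper cites \cite[Lemma 4.1]{BCM1} for this step rather than spelling it out), and for \rif{holder-cont} you apply Theorems \ref{HO} and \ref{HI} to $\tilde v$, using \rif{infinito} to control the parameter dependence and hence to inherit the same exponents $\gamma,\delta$. Your covering argument for the second inequality in \rif{holder-cont} is actually slightly more careful than the paper's direct application of \rif{riversa} on $B_{R}$ (since $\tilde v$ is only defined there), but the content is the same.
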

\begin{proof} Looking at the proof in \cite[Lemma 4.1]{BCM1}, it is easy to see that the $L^\infty$-bound
\eqn{aux-sup2}
$$
\sup_{B_{R/4}}\, |\tilde v|^p \leq c \mean{B_{R/2}}|\tilde v|^p\, dx
$$ holds whenever the Caccioppoli inequality
\eqn{cacc-int} 
$$ \int_{B_{t}} H(x,D(\tilde v-k)_{\pm})\, dx \leq  
c_{cacc} \left(\frac R{s-t}\right)^q\int_{B_{s}} \left|\frac{(\tilde v-k)_\pm}{s-t}\right|^p\, dx  
$$
is satisfied for concentric balls $B_t \Subset B_s\Subset B_{R/2}$ and $\inf \tilde v \leq k \leq \sup \tilde v$; moreover, 
the constant $c$ in \rif{aux-sup2} depends only on $n,p,q$ and $c_{cacc}$. The validity of \rif{cacc-int} follows observing that Lemma \ref{cacc-gen2} applies to $\tilde v$ since this minimizes a functional (the one in \rif{frolast}) that satisfies the assumptions of Lemma \ref{cacc-gen2}. Indeed, here we are assuming \rif{applica-p}. Furthermore, thanks to estimates \rif{infinito}, when applying Lemma \ref{cacc-gen2}, we can dispense from the dependence 
of the various constants on the quantities 
$\|\tilde v\|_{L^{\infty}(B_R)}$ and $\|H(\cdot, D\tilde v)\|_{L^1(B_R)}$. We therefore conclude that \rif{cacc-int} holds for a constant $c_{cacc}$ that ultimately depends only on $\datao$ and $M$ but not on $\tilde v$; from this it follows that also $c$ appearing in 
\rif{aux-sup2} only depends on $\datao$ and $M$. To conclude with \rif{aux-sup}, notice that since 
$\tilde v-(\tilde v)_{B_{R/2}}$ is still a local minimizer of the same functional in \rif{frolast}, then \rif{aux-sup} follows from 
\rif{aux-sup2} applied to $\tilde v-(\tilde v)_{B_{R/2}}$. We then pass to the first estimate in \rif{holder-cont}, arguing in a similar way, but this time applying Theorem \ref{HO} to $\tilde v$; in particular, we need the precise dependence of $\gamma$ stated in \rif{basicexp}. This and the first inequality in \rif{infinito} give that 
$\tilde v \in C^{0, \gamma}_{\loc}(B_R)$ where $\gamma$ is the same as in \rif{stimelocali}. Moreover, estimate \rif{localstimahol} holds for $\tilde v$ 
with $c$ depending only on $\data$; this is still a consequence of \rif{infinito}. 
Applying \rif{localstimahol} to $\tilde v$ and using a standard covering argument, we gain $[\tilde v]_{0, \gamma;B_{R/2}} \leq c (\datao) R^{-\gamma} \osc_{B_{R}} \tilde v$. On the other hand, using also \rif{stimaosc}, it is $\osc_{B_{R}}\, \tilde v\leq \osc_{B_{R}}\, u \leq 2R^\gamma [u]_{0, \gamma;B_{R}}$, so that the first inequality in \rif{holder-cont} follows. It remains to prove the second inequality in \rif{holder-cont} and this time we apply Theorem \ref{HI} to $\tilde v$; again by \rif{infinito} 
the exponent $\delta$ for $v$ does not depend on the solution considered, but only on $\data$, and can be taken to be the same one for 
$u$ fixed in \rif{stimelocali}. Therefore we have
\begin{eqnarray*}
&&\left(\mean{B_{R/2}} [H(x,D\tilde v)]^{1+\delta}\, dx\right)^{1/(1+\delta)} \stackleq{riversa}  c 
\mean{B_{R}} H(x,D\tilde v)\, dx\\ 
&&\qquad \quad  \stackleq{infinito} c\mean{B_{R}} H(x,Du)\, dx \leq c\left(\mean{B_{R}} [H(x,Du)]^{1+\delta}\, dx\right)^{1/(1+\delta)}\;,
\end{eqnarray*}
and the proof is complete. 
\end{proof}
In the following we shall use the classical excess functional given by
\eqn{excess}
$$E\left(w;B_r\right) := \left(\mean{B_{r}} |w - \left(w \right)_{B_{r}} |^p \, dx \right)^{1/p}\;, 
$$
and defined whenever $w \in L^p(B_{r})$ is a possibly vector-valued map and $B_{r} \subset \er^n$ is a ball. We shall several times use the following elementary 
property:
\eqn{elem-prop}
$$
E\left(w;B_r\right) \leq 2\left(\mean{B_{r}} |w - w_0 |^p \, dx \right)^{1/p} \qquad \mbox{for every} \ w_0 \in \er^k\;.
$$
We now go to the main lemma of this section, featuring a first decay estimate:
\begin{lemma}[Blow-up]\label{pcomp} Under the assumptions of Lemma \ref{pqcomp-p}, let $\tilde v  \in W^{1,p}(B_{R})$ be defined as in \trif{var-dir1} and assume also that \trif{applica-p} holds with some $s$ as in \trif{esseemme} and some $M\geq 1$.    
Then, for every choice of $\tilde \eps \in (0,1)$, there exists a positive radius 
\eqn{raggiostella0}
$$R^*\equiv R^*\left(\datao, M,\omega (\cdot), s_m-s, \tilde \eps \right)\;,$$
such that if $R \leq R^*$, then the inequality 
\eqn{stimadec0}
$$\mean{B_{\tau R}} |\tilde v-(\tilde v)_{B_{\tau R}}|^p \, dx\leq c\left\{\tau^p + \tau^{-n} \tilde \eps\right\}\mean{B_{R}}
|\tilde v - (\tilde v)_{B_{R}}|^p\, dx
$$
holds for every $\tau \in (0, 1/16)$, where the constant $c\equiv c(\datao, M)$ is otherwise independent of $\tilde \eps$ and $\tau$. \end{lemma}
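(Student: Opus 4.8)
\emph{Proof (outline of the argument).} The plan is to run a blow‑up scheme built on the quantitative harmonic approximation Lemma \ref{p-harm}. Since we are in the $p$‑phase \eqref{applica-p}, the coefficient is tiny on $B_R$ — by \eqref{condiadopo}, $\|a\|_{L^\infty(B_R)}\leq 3M[a]_{0,\alpha}R^{\alpha-s}$ — so $\tilde v$ behaves like a solution of a frozen equation of $p$‑Laplacean type, and I will compare $\tilde v$ with a genuine such solution $h$, which obeys a linear decay estimate. I may assume $E(\tilde v;B_R)>0$ (otherwise \eqref{stimadec0} is trivial) and set $\Lambda:=E(\tilde v;B_R)/R>0$. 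First I record an a priori upper bound on $\Lambda$: combining \eqref{infinito} with \eqref{stimelocali} and Hölder's inequality (case \eqref{bound1}, where necessarily $p\geq n$), respectively \eqref{stimaosc} with \eqref{stimelocali} (case \eqref{bound2}), and Poincaré's inequality, one gets $\Lambda^p\leq cR^{-n/(1+\delta)}$, respectively $\Lambda\leq cR^{\gamma-1}$, with $c\equiv c(\datao)$. In view of \eqref{esseemme} — where $s_m$ equals $\alpha-n(q-p)/(p(1+\delta))$ in the first case and $\alpha-(1-\gamma)(q-p)$ in the second — this yields in both cases
\[
[a]_{0,\alpha}R^{\alpha-s}\,\Lambda^{q-p}\ \leq\ c\,[a]_{0,\alpha}R^{s_m-s}\,,\qquad c\equiv c(\datao,M)\,,
\]
which tends to $0$ as $R\to0$ precisely because $s<s_m$. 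This is the quantitative content of the room left in the phase splitting \eqref{applica-p}.

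\emph{Energy estimates and freezing.} By Lemma \ref{cacc-gen2} applied to $\tilde v$ (legitimate since $\tilde v$ minimizes $\mathcal G_B$, \eqref{applica-p} holds, and thanks to \eqref{infinito} all constants depend only on $\datao,M$), together with \eqref{elem-prop} and Poincaré, one has $\mean{B_{R/4}}H(x,D\tilde v)\,dx\leq c\Lambda^p$; moreover, arguing exactly as in Lemma \ref{auxv} so that $\delta$ is the exponent of \eqref{stimelocali}, the reverse Hölder inequality \eqref{riversa} for $\tilde v$ combined with the previous bound gives $\mean{B_{R/8}}[H(x,D\tilde v)]^{1+\delta}\,dx\leq c\Lambda^{p(1+\delta)}$. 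Since $q-1\leq p(1+\delta)$ (as $q-p\leq\alpha\leq1$ under \eqref{bound2}, and $q\leq p+1$ under \eqref{bound1} with $p\geq n$), Hölder then yields $\mean{B_{R/8}}|D\tilde v|^{p-1}\,dx\leq c\Lambda^{p-1}$ and $\mean{B_{R/8}}|D\tilde v|^{q-1}\,dx\leq c\Lambda^{q-1}$. Next, with $x_B$ as in \eqref{applica0}, set $a_0:=\ai(B_R)=a(x_B)$, $H_0(z):=|z|^p+a_0|z|^q$ and $A_0(z):=\partial F(x_B,(u)_{B_R},z)$; by \eqref{assF}$_1$--\eqref{assF}$_2$ the field $A_0\in C^1(\er^n\setminus\{0\})$ satisfies \eqref{ass0} with this $H_0$ (after the harmless normalization $\nu\leq1\leq L$), and $H_0\leq H(x,\cdot)$ on $B_R$. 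Testing the Euler--Lagrange equation \eqref{el1} with $\varphi\in C_0^\infty(B_{R/8})\subset C_0^\infty(B_R)$ and using \eqref{assF}$_3$, \eqref{condiadopo}, $\omega\leq1$ and the estimates just obtained, the integrand of $\mean{B_{R/8}}\langle A_0(D\tilde v)-\partial F(x,(u)_{B_R},D\tilde v),D\varphi\rangle\,dx$ is controlled by $c[\omega(2R)|D\tilde v|^{p-1}+[a]_{0,\alpha}R^{\alpha-s}|D\tilde v|^{q-1}]\,\|D\varphi\|_{L^\infty(B_{R/8})}$, whence the almost‑harmonicity
\[
\Big|\mean{B_{R/8}}\big\langle A_0(D\tilde v),D\varphi\big\rangle\,dx\Big|\ \leq\ \sigma(R)\,\Lambda^{p-1}\,\|D\varphi\|_{L^\infty(B_{R/8})}\,,\qquad \sigma(R):=c\big[\omega(2R)+[a]_{0,\alpha}R^{s_m-s}\big]\,,
\]
with $c\equiv c(\datao,M)$ and $\sigma(R)\to0$ as $R\to0$ by the preliminary bound.

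\emph{Harmonic approximation and conclusion.} I now rescale: put $\bar v:=(\tilde v-(\tilde v)_{B_{R/4}})/\Lambda$, $\bar A_0(z):=\Lambda^{1-p}A_0(\Lambda z)$ and $\bar H_0(z):=|z|^p+a_0\Lambda^{q-p}|z|^q$; then $\bar A_0$ satisfies \eqref{ass0} with $\bar H_0$, one has $\bar H_0(D\bar v)=H_0(D\tilde v)/\Lambda^p$, the energy estimates give \eqref{eqn:v-int00}--\eqref{eqn:v-int002} for $\bar v$ on $B:=B_{R/8}$, $2B:=B_{R/4}$ with $\tilde c_1,\tilde c_2\equiv c(\datao,M)$ and $\delta_0=\delta$, and the previous step gives \eqref{eqn:almost-p-harm} with constant $\sigma(R)$. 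Given $\tilde\eps\in(0,1)$, I choose $\sigma:=\tilde\eps^{1/s_0}$ (with $s_0$ as in Lemma \ref{p-harm}) and then $R^*\equiv R^*(\datao,M,\omega(\cdot),s_m-s,\tilde\eps)$ so that $R\leq R^*$ forces $\sigma(R)\leq\sigma$; Lemma \ref{p-harm} then produces $h\in\bar v+W^{1,\bar H_0}_0(B_{R/8})$ satisfying \eqref{p-har}, \eqref{eqn:v-int1} and $\mean{B_{R/8}}|(\bar v-h)/R|^p\,dx\leq c\sigma^{s_0}=c\tilde\eps$. For $h$, the uniform‑in‑$a_0$ Lipschitz estimate for solutions of the frozen double phase equation (see \cite{CM1, CM3}) together with \eqref{eqn:v-int1} yields $\|Dh\|_{L^\infty(B_{R/16})}\leq c(n,p,q,\ratio)$, hence $E(h;B_{\tau R})\leq c\tau R$ for every $\tau\in(0,1/16)$; on the other hand, enlarging the domain of integration, $E(\bar v-h;B_{\tau R})\leq c\tau^{-n/p}R\,\tilde\eps^{1/p}$. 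Finally, by Minkowski's inequality, $E(\tilde v;B_{\tau R})=\Lambda E(\bar v;B_{\tau R})\leq\Lambda\big[E(\bar v-h;B_{\tau R})+E(h;B_{\tau R})\big]\leq c\Lambda R\big[\tau^{-n/p}\tilde\eps^{1/p}+\tau\big]$; raising to the $p$‑th power and using $\Lambda R=E(\tilde v;B_R)$ gives exactly \eqref{stimadec0}, with $c\equiv c(\datao,M)$ independent of $\tilde\eps$ and $\tau$.

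\emph{Main obstacle.} The crux is the almost‑harmonicity estimate in the second step: splitting off the frozen field leaves an error carrying the $q$‑growth quantity $|D\tilde v|^{q-1}$, which is not subordinate to the $p$‑structure, and it must be absorbed against the a priori bound on $\Lambda^{q-p}$; the gain is exactly the positivity of $s_m-s$ encoded in the phase splitting \eqref{applica-p}, and pairing this with the higher integrability exponent $\delta$ (case \eqref{bound1}) or the Hölder exponent $\gamma$ (case \eqref{bound2}) is what makes the argument close. Keeping the normalization by $\Lambda$ tidy enough that the final constant only depends on $\datao$ and $M$, and invoking the regularity of the comparison map $h$ in a form uniform in $a_0$, are the remaining delicate points.
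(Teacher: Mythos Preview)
Your strategy coincides with the paper's: freeze the field at $x_B$, derive an almost--$A_0$--harmonicity estimate with a smallness parameter vanishing as $R\to0$, apply the quantitative harmonic approximation Lemma~\ref{p-harm}, and compare with the Lipschitz comparison map $h$. The paper carries this out after an explicit spatial rescaling to $B_1$ (defining $v$ as in \eqref{defiv}), while you only normalize by $\Lambda=E(\tilde v;B_R)/R$; the two are equivalent and the final computation is identical.

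There is, however, one genuine gap in your almost--harmonicity step under \eqref{bound1}. You bound $\mean{B_{R/8}}|D\tilde v|^{q-1}\,dx\le c\Lambda^{q-1}$ by claiming $q-1\le p(1+\delta)$, justified via ``$q\le p+1$ under \eqref{bound1} with $p\ge n$''. This is false: \eqref{bound1} only gives $q-p\le p\alpha/n$, which exceeds $1$ as soon as $p>n/\alpha$ (take $n=2$, $\alpha=1$, $p=10$, $q=15$). Since the Gehring exponent $\delta$ is typically small, you cannot guarantee $q-1\le p(1+\delta)$ in this range, and the unweighted higher integrability bound does not control $|D\tilde v|^{q-1}$.

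The paper circumvents this by never separating the coefficient from the $q$--term. In the rescaled variables it writes (see \eqref{merging}--\eqref{gen-est}) the dangerous term as $c[E(R)/R]^{q-p}\mean a_R(x)|Dv|^{q-1}\,dx$, then uses $a_R^{1/q}\le cR^{(\alpha-s)/q}$ together with H\"older's inequality to estimate by $cR^{(\alpha-s)/q}[E(R)/R]^{(q-p)/q}\bigl(\mean [E(R)/R]^{q-p}a_R|Dv|^q\,dx\bigr)^{1-1/q}$; the last factor is bounded by the \emph{full} energy \eqref{stima-ass}, which requires no relation between $q-1$ and $p(1+\delta)$. In your non--rescaled setup the analogous move is to keep the factor $a(x)$ in the error (rather than replacing it by $cR^{\alpha-s}$ and then trying to bound $\mean|D\tilde v|^{q-1}$), and use $\mean_{B_{R/4}}a(x)|D\tilde v|^q\,dx\le\mean_{B_{R/4}}H(x,D\tilde v)\,dx\le c\Lambda^p$. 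With this fix the rest of your outline goes through.
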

\begin{proof} The proof goes in several steps and essentially aims at applying the harmonic approximation argument of Lemma \ref{p-harm} to a suitable blown-up function.  

{\em Step 1: Blow-up}. We start blowing-up $\tilde v$ and $a(\cdot)$ in $B_{R}\equiv B_R(x_0)$, thereby defining 
\eqn{defiv}
$$v(x) := \frac{ \tilde v(x_0+Rx)-(\tilde v)_{B_{R/2}}}{E\left(\tilde v;B_{R/2}\right)}\quad \mbox{and}\quad \ a_R(x):=
a\left(x_0+Rx\right)\;, \quad x\in B_1\;.
$$
Notice that we can always assume that $E\left(\tilde v;B_{R/2}\right)>0$, otherwise \rif{stimadec0} follows trivially.
Abbreviating $E(R)\equiv E\left(\tilde v;B_{R/2}\right)$, by \rif{applica-p}, the Caccioppoli inequality in \rif{cacc1} applies to $\tilde v$ in the ball $B_{R/2}$ as well. The result is that 
$$
\mean{B_{R/4}} H(x,D\tilde v)\, dx \leq c\mean{B_{R/2}}\left|\frac{\tilde v-(\tilde v)_{B_{R/2}}}{R}\right|^p \, dx
$$ 
holds, where $c\equiv c(\datao,M)$. Notice that, as already done for Lemma \ref{auxv},  here we are using \rif{infinito} to dispense from the dependence on $\|\tilde v\|_{L^\infty(B_R)}$ and incorporate it in the one on $\|u\|_{L^\infty(\Omega_0)}$ and, ultimately, in $\datao$. In view of the definition in \rif{excess}, the inequality in the above display can be re-written as 
$$
\mean{B_{R/4}} \left([R/E(R)]^p |D\tilde v|^p + [R/E(R)]^pa(x)|D\tilde v|^q\right)\, dx\leq c 
$$
that in terms of $v$ means
\eqn{stima-ass}
$$
\mean{B_{1/4}} \left(|Dv|^p + [E(R)/R]^{q-p}a_R(x)|Dv|^q\right)\, dx\leq c\;,
$$
again for $c\equiv c(\datao,M)$. It is now easy to see that $v$ is a local minimizer of the functional defined by
\eqn{rescaleF}
$$
w \mapsto \int_{B_1} \bar F(x, Dw)\, dx$$ with
\eqn{rescaleF2} 
$$\bar F(x, z):= \frac{F\left(x_0+Rx, (u)_{B_{R}}, [E(R)/R]z\right)}{\left[E(R)/R\right]^p}\;, 
$$
for every $x \in B_1$ and $z \in \er^n$, 
and, as such, it satisfies its Euler-Lagrange equation. This means that
\eqn{eeul}
$$
\mean{B_{1}} \big\langle  \partial F(x_0+Rx, (u)_{B_{R}}, [E(R)/R] D v), D\varphi \big\rangle \, dx=0 $$ 
holds for every $\varphi \in W^{1,\infty}_0(B_{1})$. 

{\em Step 2: Universal energy bounds.} We check the properties of the rescaled integrand $\bar F(\cdot)$ in \rif{rescaleF2}. By defining the new control function
\eqn{nuovaH}
$$
\bar H(x,z):= |z|^p +  [E(R)/R]^{q-p}a_R(x)|z|^q\;.
$$
the following conditions are now satisfied whenever $x\in B_1$, and $z \in \er^n$:
\eqn{assFdd}
$$
\nu \bar H(x,z)\leq \bar F(x,z) \leq L\bar H(x, z)$$
 as a direct consequence of \rif{growtH} and \rif{rescaleF2}. We now want to check that $\bar F(\cdot)$ satisfies the conditions allowing to apply Theorem \ref{HI} in $B_{1/4}$, this time to $v$. For this we have to check that $v$ is bounded in $B_{1/4}$ (in fact needed only when $p<n$) and that, in every case, the new coefficient 
$x \mapsto [E(R)/R]^{q-p}a_R(x)$ satisfies the needed H\"older continuity assumption in $B_{1/4}$, that is
\eqn{hol-res}
$$
 [E(R)/R]^{q-p}[a_R]_{0, \alpha;B_{1/4}} \leq c(\datao)\;.
$$
Recalling the definition of $v$ in \rif{defiv}, and using Lemma \ref{auxv}, \rif{aux-sup} yields
\eqn{boundtv}
$$
\|v\|_{L^{\infty}(B_{1/4})}\leq c(\datao,M)\;. 
$$
For \rif{hol-res}, we distinguish two cases. 

{\em Case assumption \trif{bound2} is in force.} By \rif{stimelocali} and \rif{holder-cont} we have 
\eqn{be}
$$E(R) =\left(\mean{B_{R/2}} |\tilde v - \left(\tilde v \right)_{B_{R/2}} |^p \, dx \right)^{1/p}\leq cR^{\gamma}$$ for a constant $c\equiv c(\datao)$. 
To prove \rif{hol-res}, observe that, whenever $x_1, x_2 \in B_1$, it follows that
\begin{eqnarray}
 && \notag \left|[E(R)/R]^{q-p}a_R(x_1)-[E(R)/R]^{q-p}a_R(x_2)\right| \\ 
 &&\notag  \qquad \leq  c[a]_{0, \alpha}[E(R)/R]^{q-p} 
R^{\alpha}|x_1-x_2|^\alpha \\ && \quad \  \stackleq{be}  c R^{(p-q)(1-\gamma)+\alpha}|x_1-x_2|^\alpha \stackleq{bound2} c (\datao) |x_1-x_2|^\alpha\;. \label{evident}
\end{eqnarray}
\begin{remark}\label{stupido}
Notice that, in order to perform the last estimation, we essentially need the larger bound $q\leq  p+\alpha/(1-\gamma)$ related to the one appearing in \rif{bound3}. 
\end{remark}

{\em Case assumption \trif{bound1} is in force and $p\geq n$.} Recalling \rif{stimelocali} and the second inequality in \rif{holder-cont}, by Sobolev-Morrey embedding theorem we then have
\begin{eqnarray}
\notag E(R) &\leq & c(n,p, \delta) \|D\tilde v\|_{L^{p(1+\delta)}(B_{R/4})}R^{1-\frac{n}{p(1+\delta)}} \\ &\stackleq{holder-cont} &  
c(\datao)\|H(\cdot, Du)\|_{L^{1+\delta}(B_{R/2})}^{1/p}R^{1-\frac{n}{p(1+\delta)}}\notag \\ &\stackleq{stimelocali} & c(\datao)R^{1-\frac np + \frac{n\delta}{p(1+\delta)}}.\label{sob-mor}
\end{eqnarray}
Therefore this time we can estimate, again for $x_1, x_2 \in B_1$
\begin{eqnarray*}
\left|[E(R)/R]^{q-p}a_R(x_1)-[E(R)/R]^{q-p}a_R(x_2)\right| &\leq & c[a]_{0, \alpha}R^{n(1-q/p)+\alpha} |x_1-x_2|^\alpha\\
&\leq & c(\datao)|x_1-x_2|^\alpha \;,
\end{eqnarray*}
as now it is $q/p \leq 1+\alpha/n$, and \rif{hol-res} follows in this case too. 

With \rif{hol-res} and \rif{boundtv} at our disposal, and recalling that \rif{assFdd} holds, we are now able to apply Theorem \ref{HI} to $v$, thereby obtaining the existence of a higher integrability exponent 
$\delta_1 >0$ and a constant $c$, both depending on $\datao$ and $M$, but otherwise independent of $R$, such that
\eqn{nuova-maggiore}
$$
\left(\mean{B_{1/8}} [\bar H(x,Dv)]^{1+\delta_1}\, dx\right)^{1/(1+\delta_1)} \leq c 
\mean{B_{1/4}} \bar H(x,Dv)\, dx\;.
$$ 
Moreover, recalling \rif{stima-ass}, we conclude with
\eqn{appi1}
$$
\mean{B_{1/4}} \bar H(x,Dv)\, dx+ \mean{B_{1/8}} [\bar H(x,Dv)]^{1+\delta_1}\, dx \leq c(\datao,M)\;.
$$ 

{\em Step 3: Frozen functional.} Let us now define 
$$\bar F_0(z):= \frac{F\left(x_B, (u)_{B_{R}}, [E(R)/R]z\right)}{[E(R)/R]^{p}}
$$
and the related control Young function 
\eqn{ultimaH}
$$
\bar H_0(z):= |z|^p +  [E(R)/R]^{q-p}a_{{\rm i}}(B_{R})|z|^q
$$
for every $z \in \er^n$. 
Later on, we shall use the functional 
\eqn{frozenfun}
$$
W^{1,\bar H_0}(B_{1/8}) \ni w \mapsto \int_{B_{1/8}} \bar F_0(Dw)\, dx\;.
$$
We notice that the integrand $\bar F_0(\cdot)$ satisfies the 
following growth and ellipticity conditions:
\eqn{assFhh}
$$
\left\{
\begin{array}{c}
\nu \bar H_0(z)\leq \bar F_0(z) \leq L\bar H_0(z)\\ [8 pt]
|\partial \bar F_0(z)||z|+|\partial^2 \bar F_0(z)||z|^2 \leq L\bar H_0(z)\\ [6 pt]
\displaystyle \nu \frac{\bar H_0(z)}{|z|^2}|\xi|^2\leq  \big\langle \partial^2 \bar F_0(z)\xi, \xi\big\rangle \;, 
 \end{array}\right.
$$
for every choice of $z\in \er^n\setminus \{0\}$ and $\xi \in \er^n$. 

{\em Step 4: Harmonic type approximation.}
We notice that, with $\varphi \in W^{1,\infty}_0(B_{1/8})$ being such that $\|D\varphi\|_{L^\infty(B_{1/8})}\leq 1$, by \rif{eeul} we can write
\begin{eqnarray}
\notag&&\left|\mean{B_{1/8}} \Big\langle \partial \bar F_0 (D v), D\varphi  \Big\rangle\, dx \right|\\
\notag&&   = \left|\mean{B_{1/8}} \Big\langle \partial \bar F_0 (D v)-\frac{\partial F(x_0+Rx, (u)_{B_{R}}, [E(R)/R] D v)}{[E(R)/R]^{p-1}}, D\varphi  \Big\rangle \, dx\right|\\
&&   \leq \mean{B_{1/8}} \left| \partial \bar F_0 (D v)-\frac{\partial F(x_0+Rx, (u)_{B_{R}}, [E(R)/R] D v)}{[E(R)/R]^{p-1}} \right|\, dx :=\mathcal I\;.\label{iltermine}
\end{eqnarray}
The term $\mathcal I$ can be estimated using \rif{assF}$_3$. Recalling that $a_{{\rm i}}(B_R)\leq a_R(x)$ for every $x \in B_1$, this gives
\begin{eqnarray}
\notag \mathcal I &\leq & c\omega(R) \mean{B_{1/8}} |Dv|^{p-1}\, dx +c\omega(R) 
\left[\frac{E(R)}{R}\right]^{q-p}\mean{B_{1/8}} a_R(x)|Dv|^{q-1}\, dx\\
\notag 
&& \quad + c
\left[\frac{E(R)}{R}\right]^{q-p}\mean{B_{1/8}} |a_R(x)-a_{{\rm i}}(B_R)||Dv|^{q-1}\, dx\\
&=:& \mathcal I_1+\mathcal I_2+\mathcal I_3\;, \label{merging}
\end{eqnarray}
with $c\equiv c(p,q,L)$. We then estimate separately the above three terms. In any case we have, by H\"older's inequality, we have that 
$$\mathcal I_1\leq c\omega(R)\left(\mean{B_{1/8}} |Dv|^{p}\, dx\right)^{1-1/p}\stackleq{stima-ass} c\, \omega (R)
$$
for $c\equiv c (\datao, M)$. For the remaining terms we distinguish between two different cases, as already done Step 2.

{\em Case assumption \trif{bound2} is in force.}
We have 
\begin{eqnarray}
\notag && \mathcal I_2\\
&& \quad = c\, \omega (R)\left[\frac{E(R)}{R}\right]^{\frac{q-p}{q}}
\notag \mean{B_{1/8}} [a_R(x)]^{\frac1q}[a_R(x)]^{\frac{q-1}{q}}\left[\frac{E(R)}{R}\right]^{\frac{(q-p)(q-1)}{q}}|Dv|^{q-1}\, dx\\ 
\notag &&\ \stackleq{be}   c\, \omega (R)M^{1/q}R^{\frac{p+\alpha -q + \gamma(q-p)-s}{q}} 
\left(\mean{B_{1/8}} [E(R)/R]^{q-p}a_R(x)|Dv|^q\, dx\right)^{1-1/q}\\
&& \ \stackleq{stima-ass}   c\omega(R)\;, \label{stimaI2}
\end{eqnarray}
for $c\equiv c(\datao,M)$, where we have used $R\leq 1$ and \rif{esseemme}. Similarly, we have
\begin{eqnarray}
\notag \mathcal I_3&\leq &  c [a]_{0, \alpha}^{1/q}\left[\frac{E(R)}{R}\right]^{\frac{q-p}{q}}R^{\frac{\alpha}{q}}
\notag \mean{B_{1/8}}[a_R(x)]^{\frac{q-1}{q}}\left[\frac{E(R)}{R}\right]^{\frac{(q-p)(q-1)}{q}}|Dv|^{q-1}\, dx\\ 
\notag & \stackleq{be} &   cR^{\frac{p+\alpha -q + \gamma(q-p)-s}{q}} 
\left(\mean{B_{1/8}} [E(R)/R]^{q-p}a_R(x)|Dv|^q\, dx\right)^{1-1/q}\\
&\stackleq{stima-ass} &  cR^{\frac{p+\alpha -q + \gamma(q-p)-s}{q}}  \;.  \label{stimaI3}
\end{eqnarray}
Collecting the estimates found for the terms $\mathcal I_1, \mathcal I_2$ and $\mathcal I_3$ in the last three displays and merging them with \rif{merging}, we conclude with
\eqn{primaI}
$$
\mathcal I \leq c(\datao,M) \left[\omega(R)+R^{\frac{p+\alpha -q + \gamma(q-p)-s}{q}} \right]\;.
$$

{\em Case assumption \trif{bound1} is in force and $p\geq n$.} Re-writing as in \rif{stimaI2}, but using this time 
\rif{sob-mor}, we have
\begin{eqnarray}
\notag &&\left[\frac{E(R)}{R}\right]^{q-p}\mean{B_{1/8}} a_R(x)|Dv|^{q-1}\, dx\\
\notag
&&\ \quad \stackleq{condiadopo}cM^{1/q}
R^{\frac{\alpha-s}{q}}\left[\frac{E(R)}{R}\right]^{\frac{q-p}{q}} \left(\mean{B_{1/8}}  [E(R)/R]^{q-p}
a_R(x)|Dv|^{q}\, dx\right)^{1-1/q}\\
\notag
&&\ \quad \stackleq{stima-ass}cM^{1/q}
R^{\frac{\alpha-s}{q}}\left[\frac{E(R)}{R}\right]^{\frac{q-p}{q}} \\ &&\ \quad \stackleq{sob-mor}  
c(\datao,M)R^{\frac{1}{q}\left[\alpha -\frac {(q-p)n}p +\frac{n\delta(q-p)}{p(1+\delta)}-s\right]}\;.\label{gen-est}
\end{eqnarray}
Notice that the exponent of $R$ in the last line of \rif{gen-est} is non-negative by \rif{bound1} and \rif{esseemme}. By using \rif{gen-est} we readily have that 
$$
\mathcal I_2\leq cR^{\frac{1}{q}\left[\alpha -\frac {(q-p)n}p +\frac{n\delta(q-p)}{p(1+\delta)}-s\right]}\,\omega(R)\leq c\,  \omega(R)\;,
$$
again for $c\equiv c(\datao,M)$, 
while, recalling the definition of $a_{{\rm i}}(B_R)$ in \rif{applica0}, we finally estimate
$$
\mathcal I_3\leq c\left[\frac{E(R)}{R}\right]^{q-p}\mean{B_{1/8}} a_R(x)|Dv|^{q-1}\, dx\leq c R^{\frac{1}{q}\left[\alpha -\frac {(q-p)n}p +\frac{n\delta(q-p)}{p(1+\delta)}-s\right]}\;, 
$$ 
again with $c\equiv c(\datao,M)$. Merging the estimates for the three terms $\mathcal I_1, \mathcal I_2$ and $\mathcal I_3$ with the one in \rif{merging}, we this time get
\eqn{primaI2}
$$
\mathcal I \leq c(\datao,M) \left\{\omega(R)+R^{\frac{1}{q}\left[\alpha -\frac {(q-p)n}p +\frac{n\delta(q-p)}{p(1+\delta)}-s\right]} \right\}\;.
$$
\begin{remark}\label{stupido2} The analysis of the first case is still valid when assuming larger bound in \rif{bound3}. This is better than the one in \rif{bound1} if 
$
p > n/(1-\gamma). 
$
\end{remark}
Now, taking into account the estimates found for $\mathcal I$ in \rif{primaI} and \rif{primaI2}, and recalling \rif{iltermine}, by finally defining 
\eqn{defioR}
$$
\texttt{o}(R):=
\left\{
\begin{array}{ccc}
\omega(R)+R^{\frac{1}{q}\left[\alpha -\frac {(q-p)n}p +\frac{n\delta(q-p)}{p(1+\delta)}-s\right]}&\mbox{if we use \trif{bound1}}& \\ [7 pt]
\omega(R)+R^{\frac{p+\alpha -q + \gamma(q-p)-s}{q}}&\mbox{if we use \trif{bound2}}\;,
\end{array} \right.
$$
which is non-decreasing, we conclude with
\eqn{appi2}
$$
\left|\mean{B_{1/8}} \big\langle \partial \bar F_0 (D v), D\varphi  \big\rangle\, dx \right|\leq c_h\,\texttt{o}(R)\|D\varphi\|_{L^{\infty}}$$
that now holds for every $\varphi \in W^{1,\infty}_0(B_{1/8})$, where $c_h\equiv c_h(\datao,M)$. 
Observe also that, with the notation in \rif{esseemme}, we have in any case - either we use \rif{bound1} or \rif{bound2} - that
\eqn{precisaO} 
$$
\texttt{o}(R)= \omega(R) + R^{(s_m-s)/q}\;.
$$
By \rif{appi1}, \rif{appi2} and \rif{assFhh} we are now in position to apply Lemma~\ref{p-harm} 
with the choice 
$
A_0(z) \equiv \partial \bar F_0(z)$, $H_0(z)\equiv \bar H_0(z)$, and $a_0 \equiv  [E(R)/R]^{q-p}a_{{\rm i}}(B_{R})$, 
so that, in particular, assumptions \rif{ass0} are satisfied. 
By Lemma~\ref{p-harm} there exists a function 
$h \in v+ W^{1,\bar H_0}_0(B_{1/8})$ such that
\begin{equation}\label{p-har-dopo}
\mean{B_{1/8}}\big\langle \partial  \bar F_0(Dh), D\varphi \big\rangle\, dx=0\qquad\qquad \text{for all $\varphi \in W^{1,\infty}_0(B_{1/8})$}\,,
\end{equation}
\eqn{stimaHH}
$$
\mean{B_{1/8}} \bar H_0(Dh)\, dx+ \mean{B_{1/8}}\big[H_0(Dh)\big]^{1+\delta_0} \, dx \leq c\, ,
$$
\begin{eqnarray}
&&\notag  \mean{B_{1/8}}\left( |V_p(Dv)-V_p(Dh)|^2+[E(R)/R]^{q-p}a_{{\rm i}}(B_{R})|V_q(Dv)-V_q(Dh)|^2\right) \, dx \\
&& \quad  \leq c\, [\texttt{o}(R)]^{s_1}\label{stimadiff}
\end{eqnarray}
and finally
\eqn{stimadiff-final-pre}
$$
\mean{B_{1/8}} \left(|v-h|^p+[E(R)/R]^{q-p}a_{{\rm i}}(B_{R})|v-h|^q\right)\, dx \leq c_d\, [\texttt{o}(R)]^{s_0}\;.
$$
In all the estimates above the constants $c, c_d \geq 1$ and $ s_0, s_1\in (0, 1)$ depend on $\datao$ and $M$, but are otherwise independent of $R$. 

{\em Step 5: Choice of the radius $R_*$ in \trif{raggiostella0}}. Given the definition of $\texttt{o}(R)$ in \rif{precisaO}, and given $\tilde \eps \in (0,1)$ as in the statement, we determine $R_*$ small enough to get
$$
c_d[\omega(R_*)]^{s_0} + c_dR_*^{s_0(s_m-s)/q}\leq \tilde \eps\;.
$$
This, given the dependence of $s_0$ and $c_d$ on $\datao$ and $M$, gives the final dependence displayed in \rif{raggiostella0}. By \rif{stimadiff-final-pre} we conclude that 
\eqn{stimadiff-final}
$$
\mean{B_{1/8}} \left(|v-h|^p+[E(R)/R]^{q-p}a_{{\rm i}}(B_{R})|v-h|^q\right)\, dx \leq \tilde \eps\;.
$$
\indent {\em Step 6: Proof of the decay estimate \trif{stimadec0}}. We first observe that
by a standard density argument the relation in \rif{p-har-dopo} 
continues to hold whenever $\varphi \in W^{1,1}_0(B_{1/8})$ is such that 
$\bar H_0(D\varphi) \in L^{1}(B_{1/8})$. This means that $h$ is a local minimizer
of the functional
$$
W^{1,\bar H_0}(B_{1/8}) \ni w \mapsto \int_{B_{1/8}} \bar F_0(Dw)\, dx\;.$$
Conditions \rif{assFhh} are satisfied by the integrand $\bar F_0(\cdot)$, so we are able to apply the results from \cite{Lieb} (see also \cite{Baroni}) to deduce the following a priori gradient bound:
\eqn{stima-lieb1}
$$
 \sup_{B_{1/16}}\,  \bar H_0( Dh)  \leq c  \mean{B_{1/8}}\bar H_0( Dh) \, dx\;,
$$
where this time $c\equiv c (n,p, q,\ratio)$. By taking $\tau \leq 1/16$, we can estimate
\begin{eqnarray*}
\mean{B_{\tau}} |v-(v)_{B_{\tau}}|^p \, dx & \leq &c \mean{B_\tau} |v -(h)_{B_\tau} |^p \, dx \\ 
&\leq &c \mean{B_\tau} |h -(h)_{B_\tau} |^p \, dx+c\tau^{-n}\mean{B_{1/8}} |v -h |^p \, dx 
\\ & \stackleq{stimadiff-final} &c\tau^p \sup_{B_\tau} \, |Dh|^p + c\tau^{-n} \tilde \eps
\\ & \leq &c\tau^p \sup_{B_\tau} \, \bar H_0( Dh) + c\tau^{-n} \tilde \eps
\\ & \stackleq{stima-lieb1} & c\tau^p \mean{B_{1/8}}\bar H_0( Dh) \, dx+ c\tau^{-n} \tilde \eps
\\ & \stackleq{stimaHH} &c\tau^p + c\tau^{-n} \tilde \eps\;. 
\end{eqnarray*}
By scaling back to $\tilde v$ and recalling \rif{defiv}, we get \rif{stimadec0} after some elementary manipulations. 
\end{proof}
A crucial outgrow of the previous proof is 
\begin{lemma}[Comparison]\label{pcomp-dopo} In the setting of Lemma \ref{pqcomp-p}, let $\tilde v  \in W^{1,p}(B_{R})$ be defined as in \trif{var-dir1} and assume also that \trif{applica-p} holds with some $s$ as in \trif{esseemme} and some $M\geq 1$. Then, there exists a function $\tilde h \in \tilde v +W^{1,p}_0(B_{R/8})$ which is a local minimizer of the functional 
\eqn{frolast22}
$$
\mathcal F_{B}(w):=   \int_{B_{R/8}} F(x_B, (u)_{B_{R}}, Dw) \, dx
$$
in the sense of Definition \ref{defimain} and that satisfies 
\begin{eqnarray}
\nonumber &&\mean{B_{R/8}} \left( |V_p(Du)-V_p(D\tilde h)|^2+ a_{{\rm i}}(B_{R})|V_q(Du)-V_q(D\tilde h)|^2\right)\, dx\\
&&\quad  \leq c\left\{ [\omega(R^\gamma)]^{s_1}+ R^{s_1(s_m-s)/q} \right\}\mean{B_{R}} H(x, Du) \, dx
\label{comp11dd}
\end{eqnarray}
for a constant $c$ and a positive exponent $s_1$, both depending only on $\datao$ and $M$. Finally, the inequality
\eqn{stima-energia}
$$
\mean{B_{R/8}} \left( |D\tilde h|^p+ a_{{\rm i}}(B_{R})|D\tilde h|^q\right)\, dx \leq c
\mean{B_{R}} H(x, Du)\, dx 
$$
holds for $c\equiv c (n, \ratio)$. 
\end{lemma}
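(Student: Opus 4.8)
The plan is to produce $\tilde h$ by scaling back the function $h$ constructed in Step~4 of the proof of Lemma~\ref{pcomp}, and then to pull back to $B_{R/8}$ the estimates obtained there on the unit ball, combining the outcome with the comparison estimate \rif{comp11} of Lemma~\ref{pqcomp-p}. I keep the notation of Lemma~\ref{pcomp}: $B_R\equiv B_R(x_0)$, $E(R)\equiv E(\tilde v;B_{R/2})$, and I may assume $E(R)>0$, since otherwise $\tilde v$ is constant on $B_{R/2}$, one takes $\tilde h\equiv (\tilde v)_{B_{R/2}}$, and \rif{comp11dd}--\rif{stima-energia} follow at once from \rif{comp11} and \rif{infinito}. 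With $h\in v+W^{1,\bar H_0}_0(B_{1/8})$ as in Step~4, a local minimizer of $w\mapsto\int_{B_{1/8}}\bar F_0(Dw)\,dx$, set
$$
\tilde h(y):=E(R)\,h\!\left(\frac{y-x_0}{R}\right)+(\tilde v)_{B_{R/2}}\,,\qquad y\in B_{R/8}\,.
$$
Undoing the rescaling in the definition of $\bar F_0$ in Step~3, for any competitor $w$ with $w-\tilde v\in W^{1,H_0}_0(B_{R/8})$, where $H_0(z):=|z|^p+\ai(B_R)|z|^q=H(x_B,z)$, one checks that $\hat w(x):=[w(x_0+Rx)-(\tilde v)_{B_{R/2}}]/E(R)$ lies in $h+W^{1,\bar H_0}_0(B_{1/8})$ and that $\int_{B_{1/8}}\bar F_0(D\hat w)\,dx=[R/E(R)]^p R^{-n}\mathcal F_B(w)$; minimality of $h$ then gives that $\tilde h$ is a local minimizer of $\mathcal F_B$ in the sense of Definition~\ref{defimain}, while $\tilde h\in\tilde v+W^{1,p}_0(B_{R/8})$ since $h-v\in W^{1,\bar H_0}_0(B_{1/8})\subseteq W^{1,p}_0(B_{1/8})$.

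I would next transfer \rif{stimadiff}. Using the homogeneity $V_t(\lambda z)=\lambda^{t/2}V_t(z)$ for $\lambda>0$ and $t\in\{p,q\}$ together with the identities $D\tilde v(x_0+Rx)=[E(R)/R]Dv(x)$ and $D\tilde h(x_0+Rx)=[E(R)/R]Dh(x)$, the change of variables $y=x_0+Rx$ yields
\begin{align*}
&\mean{B_{R/8}}\Big(|V_p(D\tilde v)-V_p(D\tilde h)|^2+\ai(B_R)|V_q(D\tilde v)-V_q(D\tilde h)|^2\Big)\,dy\\
&\qquad=\left[\frac{E(R)}{R}\right]^p\mean{B_{1/8}}\Big(|V_p(Dv)-V_p(Dh)|^2+\big[\tfrac{E(R)}{R}\big]^{q-p}\ai(B_R)|V_q(Dv)-V_q(Dh)|^2\Big)\,dx\,,
\end{align*}
the decisive point being that the weight $\ai(B_R)[E(R)/R]^q$ in front of the $V_q$-difference factors as $[E(R)/R]^p$ times the frozen weight $[E(R)/R]^{q-p}\ai(B_R)$ that actually appears in \rif{stimadiff}. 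Hence the right-hand side is bounded by $c[E(R)/R]^p[\texttt{o}(R)]^{s_1}$; by Poincar\'e's inequality and the second estimate in \rif{infinito} one has $[E(R)/R]^p\leq c\mean{B_{R/2}}H(x,D\tilde v)\,dx\leq c\mean{B_R}H(x,Du)\,dx$; and since $R\leq1$ and $\gamma\in(0,1)$ give $\omega(R)\leq\omega(R^\gamma)$, recalling $\texttt{o}(R)=\omega(R)+R^{(s_m-s)/q}$ from \rif{precisaO} we get $[\texttt{o}(R)]^{s_1}\leq c\big([\omega(R^\gamma)]^{s_1}+R^{s_1(s_m-s)/q}\big)$. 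Altogether,
$$
\mean{B_{R/8}}\Big(|V_p(D\tilde v)-V_p(D\tilde h)|^2+\ai(B_R)|V_q(D\tilde v)-V_q(D\tilde h)|^2\Big)\,dx\leq c\big([\omega(R^\gamma)]^{s_1}+R^{s_1(s_m-s)/q}\big)\mean{B_R}H(x,Du)\,dx\,.
$$

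Then I would combine this with \rif{comp11}. Since $a(x)\geq\ai(B_R)$ on $B_R\supset B_{R/8}$, restricting \rif{comp11} to $B_{R/8}$ gives $\mean{B_{R/8}}(|V_p(Du)-V_p(D\tilde v)|^2+\ai(B_R)|V_q(Du)-V_q(D\tilde v)|^2)\,dx\leq c\,\omega(R^\gamma)\mean{B_R}H(x,Du)\,dx$; adding this to the previous display via the triangle inequality applied to $V_p(Du)-V_p(D\tilde h)$ and to $V_q(Du)-V_q(D\tilde h)$ (the latter weighted by $\ai(B_R)$), and using $\omega(R^\gamma)\leq[\omega(R^\gamma)]^{s_1}$ (valid since $\omega(R^\gamma)\leq1$ and $s_1\in(0,1)$), produces \rif{comp11dd}. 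For the energy estimate \rif{stima-energia}, I would use minimality of $\tilde h$ for $\mathcal F_B$ against the competitor $\tilde v$ (admissible because $\tilde h-\tilde v\in W^{1,H_0}_0(B_{R/8})$ by construction): by \rif{growtH}, $\nu\mean{B_{R/8}}H_0(D\tilde h)\,dx\leq\mean{B_{R/8}}F(x_B,(u)_{B_R},D\tilde v)\,dx\leq L\mean{B_{R/8}}H_0(D\tilde v)\,dx$, and since $H_0\leq H(x,\cdot)$ on $B_R$ and $\mean{B_R}H(x,D\tilde v)\,dx\leq(L/\nu)\mean{B_R}H(x,Du)\,dx$ by \rif{infinito}, inequality \rif{stima-energia} follows with $c\equiv c(n,\ratio)$.

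The only genuinely delicate point is the exponent bookkeeping in the rescaling of the second paragraph: one must verify that the $\ai(B_R)$-weighted $V_q$-difference scales with exactly the same power $[E(R)/R]^p$ as the $V_p$-difference, which is precisely what makes the frozen weight of \rif{stimadiff} reappear correctly and lets the $L^1$-norm of $H(\cdot,Du)$ absorb the factor $[E(R)/R]^p$; beyond this, everything is a routine change of variables together with the already established estimates \rif{stimadiff}, \rif{stimaHH}, \rif{comp11} and \rif{infinito}.
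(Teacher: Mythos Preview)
Your proof is correct and follows essentially the same approach as the paper: define $\tilde h$ by blowing down the function $h$ from Step~4 of Lemma~\ref{pcomp}, rescale \rif{stimadiff} back to $B_{R/8}$ (checking that the $\ai(B_R)$-weighted $V_q$-term scales with the same factor $[E(R)/R]^p$ as the $V_p$-term), bound $[E(R)/R]^p$ via Poincar\'e and \rif{infinito}, and then combine with \rif{comp11} by the triangle inequality; the energy bound \rif{stima-energia} follows from minimality of $\tilde h$ against $\tilde v$ and \rif{infinito}. You are in fact slightly more careful than the paper in two places: you add back the constant $(\tilde v)_{B_{R/2}}$ in the definition of $\tilde h$ so that $\tilde h-\tilde v$ genuinely vanishes on $\partial B_{R/8}$, and you treat the degenerate case $E(R)=0$ explicitly.
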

\begin{proof}We go back to the proof of Lemma \ref{pcomp}, Steps 3 and 4 and consider the function $h \in v+ W^{1,\bar H_0}_0(B_{1/8})$ found by the application of Lemma \ref{p-harm} and solving \rif{p-har-dopo}. We blow-down $h$ defining \eqn{nuovah}
$$
\tilde h(x):= E\left(\tilde v;B_R\right)\,h\left(\frac{x-x_0}{R}\right)\,,\qquad x \in B_{R/8}(x_0)\;.
$$
We notice that $\tilde h \in \tilde v + W^{1,\tilde H_0}_0(B_{R/8})$, where now 
$\tilde H_0(z):= |z|^p +  \ai(B_{R})|z|^q$. The function $h$ solves \rif{p-har-dopo}, which is the Euler-Lagrange equation of the functional in \rif{frozenfun}, 
and therefore, by strong convexity, a local minimizer too. More precisely, we have that 
$$
\int_{B_{1/8}} \bar F_0(Dh) \, dx \leq \int_{B_{1/8}} \bar F_0(D h+D\varphi) \, dx
$$
holds whenever $\varphi\in W^{1,\bar H_0}_0(B_{1/8})$ and $\bar H_0(\cdot)$ is defined as in \rif{ultimaH}. This implies that $\tilde h$ in \rif{nuovah} 
minimizes the functional in \rif{frolast22} in the sense that 
$
\mathcal F_B(\tilde h) \leq \mathcal F_B(\tilde h+\varphi )$ holds for every $\varphi\in W^{1, \tilde H_0}_0(B_{R/8})$. By \rif{assFhh} $\tilde h$ satisfies the energy estimate 
\eqn{energy-estimate-ultima}
$$
\mean{B_{R/8}} \tilde H_0(D\tilde h)\, dx \leq \frac L\nu
\mean{B_{R/8}}\tilde H_0(D\tilde v)\, dx \leq \frac {8^nL}\nu\mean{B_{R}}H(x,D\tilde v)\, dx
$$
which can be obtained as in \rif{coerc}. Given the definitions of $\tilde h$ and of $v$ in \rif{defiv},
the comparison estimate in \rif{stimadiff} gives, using also Poincar\'e inequality and \rif{elem-prop}
\begin{eqnarray}
&&\notag  \mean{B_{R/8}}\left( |V_p(D\tilde v)-V_p(D\tilde h)|^2
+a_{{\rm i}}(B_{R})|V_q(D\tilde v)-V_q(D\tilde h)|^2\right) \, dx \\
&& \qquad   \leq c\, [\texttt{o}(R)]^{s_1}\mean{B_{R}}
\left|\frac{\tilde v - (\tilde v)_{B_{R}}}{R}\right|^p\, dx
\leq c\,  [\texttt{o}(R)]^{s_1}\mean{B_{R}}
|D\tilde v|^p\, dx \notag \\
&& \qquad  \leq c\, [\texttt{o}(R)]^{s_1}\mean{B_{R}}
H(x, D\tilde v)\, dx
\stackleq{infinito} c\,  [\texttt{o}(R)]^{s_1}\mean{B_{R}}
H(x, Du)\, dx\;. \label{lasti}
\end{eqnarray}
Here it is $c \equiv c(\datao,M)$ while $\texttt{o}(R)$ has been defined in \rif{defioR} and \rif{precisaO}. Combining \rif{lasti} with \rif{comp11}, and recalling again \rif{precisaO}, we get \rif{comp11dd} again after a few elementary manipulations. Finally, \rif{stima-energia} follows using \rif{energy-estimate-ultima} together with the second inequality in \rif{infinito}. 
\end{proof}
We finally conclude this section with another decay estimate.  
\begin{lemma}\label{pcomp3p} Let $ u \in W^{1,p}(\Omega)$ be a local minimiser of the functional $\mathcal F$ defined in \trif{funF}, under the assumptions \trif{assF}-\trif{convex-omega}. Assume that either \trif{bound1} or \trif{bound2} 
is satisfied, and furthermore that 
\eqn{applica-pt}
$$
a_{{\rm i}}(B_{\tau R}) \leq M[a]_{0,\alpha}(\tau R)^{\alpha-s}
$$ 
holds with some $s$ as in \trif{esseemme} and some $M\geq 1$. Then, for every choice of $\bar \eps \in (0,1)$, there exists a positive radius $R_*$, \eqn{raggiostella}
$$R_*\equiv R_*(\datao, M, \omega (\cdot), s_m-s, \bar \eps)\;,$$
such that if $R \leq R_*$, then the inequality 
\eqn{seconda-p1} 
$$\int_{B_{\tau R}}H(x, Du)\, dx  \leq c_p \left\{\tau^{n} + c_\eps \tau^{-p} \bar \eps + \tau^{-p}\eps \right\} \int_{B_R}H(x, Du)\, dx\;,$$
holds for every $\tau \in (0, 1/32)$ and every $\eps \in (0,1)$, where the constants $c_p\equiv c_p(\datao, M)$ and $c_\eps\equiv c_\eps (\eps, p,q)$ are otherwise independent of $\tilde \eps $ and $\tau$. 
\end{lemma}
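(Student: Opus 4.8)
The plan is to transfer the decay estimate \rif{stimadec0} for the comparison map $\tilde v$ to the natural energy $H(\cdot,Du)$, by interposing a Caccioppoli inequality on a small ball together with the comparison estimate \rif{comp11-ancora}.

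The preliminary — and, in my view, crucial — observation is that, although we only assume the $p$-phase condition \rif{applica-pt} on $B_{\tau R}$, it in fact propagates to \emph{all larger concentric balls up to $B_R$}. Indeed, if the $(p,q)$-phase condition \rif{applica} held on some concentric ball $B_\rho$ with $\tau R\le\rho\le R$, then by Remark~\ref{propaga2} it would also hold on $B_{\tau R}$, contradicting \rif{applica-pt}. Hence \rif{applica-p} holds on $B_\rho$ for every $\rho\in[\tau R,R]$; in particular Lemma~\ref{pqcomp-p} applies on $B_R$ (so the map $\tilde v$ of \rif{var-dir1} obeys \rif{comp11-ancora} and \rif{infinito}), Lemma~\ref{pcomp} applies on $B_R$ (so $\tilde v$ obeys the decay \rif{stimadec0}), and the Caccioppoli inequality \rif{cacc1} of Lemma~\ref{cacc-gen2} is available on the sub-ball $B_{2\tau R}$; note that $2\tau<1/16$ when $\tau<1/32$, which is what will allow \rif{stimadec0} to be used with $2\tau$ in place of $\tau$.

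I would then apply \rif{cacc1} on $B_{2\tau R}$ with $u_0=(u)_{B_{2\tau R}}$ and split the oscillation of $u$ in the standard way, to get
\[
\int_{B_{\tau R}}H(x,Du)\,dx\,\le\,\frac{c}{(\tau R)^p}\int_{B_{2\tau R}}\big|u-(u)_{B_{2\tau R}}\big|^p\,dx\,\le\,\frac{c}{(\tau R)^p}\int_{B_{2\tau R}}\Big(|u-\tilde v|^p+\big|\tilde v-(\tilde v)_{B_{2\tau R}}\big|^p\Big)\,dx\,.
\]
For the first summand I use $B_{2\tau R}\subset B_R$ and then \rif{comp11-ancora}, producing a term of order $\tau^{-p}\big[c_\eps\,\omega(R^\gamma)+\eps\big]\int_{B_R}H(x,Du)\,dx$. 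For the second summand I apply \rif{stimadec0} with $\tau$ there replaced by $2\tau$, and with the free parameter $\tilde\eps$ in Lemma~\ref{pcomp} chosen equal to the given $\bar\eps$, followed by Sobolev--Poincar\'e and \rif{infinito}; this produces a term of order $\{\tau^n+\tau^{-p}\bar\eps\}\int_{B_R}H(x,Du)\,dx$. Summing the two contributions, absorbing $\tau^{-p}\bar\eps\le c_\eps\tau^{-p}\bar\eps$, and requiring $R$ small enough that $c\,\omega(R^\gamma)\le\bar\eps$, gives \rif{seconda-p1}: $c_p$ collects the constants of \rif{cacc1} and \rif{stimadec0}, hence $c_p\equiv c_p(\datao,M)$, while $c_\eps\equiv c_\eps(\eps,p,q)$ is the one of \rif{comp11-ancora}.

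Finally, $R_*$ is the minimum of the two smallness thresholds used above: $R\le R^*$ with $R^*$ as in \rif{raggiostella0} for the choice $\tilde\eps=\bar\eps$ — this is where the dependence on $s_m-s$ and on $\omega(\cdot)$ enters — and $R$ so small that $c\,\omega(R^\gamma)\le\bar\eps$, with $c$ and $\gamma$ as in \rif{comp11-ancora} and \rif{stimelocali}, thus depending only on $\datao$; this yields the dependence claimed in \rif{raggiostella}. The only genuinely delicate point I anticipate is the upward propagation of the $p$-phase isolated in the second paragraph, without which the hypothesis on $B_{\tau R}$ alone does not give access to $\tilde v$ at scale $B_R$; after that the argument is bookkeeping of the error terms, and since Lemmas~\ref{pqcomp-p}, \ref{pcomp} and \ref{cacc-gen2} already treat \rif{bound1} and \rif{bound2} uniformly, no distinction between these two cases is needed.
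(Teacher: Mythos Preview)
Your proof is correct and follows essentially the same approach as the paper: upward propagation of the $p$-phase via Remark~\ref{propaga2}, the decay \rif{stimadec0} for $\tilde v$ from Lemma~\ref{pcomp}, the comparison \rif{comp11-ancora}, and the Caccioppoli inequality \rif{cacc1}. The only cosmetic difference is the order of the last two operations: the paper first derives the $L^p$-oscillation decay for $u$ on $B_{\tau R}$ and then applies \rif{cacc1} on $B_{\tau R}$ (renaming $\tau/2\mapsto\tau$), whereas you apply \rif{cacc1} on $B_{2\tau R}$ first and then invoke \rif{stimadec0} with $2\tau$; both routes are equivalent.
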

\begin{proof} We notice that \rif{applica-pt} implies that also \rif{applica-p} holds, that is,
\eqn{holdstrue}
 $$a_{{\rm i}}(B_{ R})\linebreak  \leq M[a]_{0,\alpha}R^{\alpha-s}$$
holds true. Indeed, on the contrary, thanks to \rif{ognir}, $a_{{\rm i}}(B_{ R}) > M[a]_{0,\alpha}R^{\alpha-s}$ 
would imply that  
$a_{{\rm i}}(B_{\tau R}) > M[a]_{0,\alpha}(\tau R)^{\alpha-s}$, that is, a contradiction \rif{applica-pt}. The validity of \rif{holdstrue}  means we can apply Lemma \ref{pcomp} with $\tilde \eps\equiv \eps_1$ to be chosen in a few lines, and we can use \rif{stimadec0} provided $R \leq R^*\equiv R^*(\datao, M, \omega(\cdot), s_m-s,\eps_1)$ is determined via \rif{raggiostella0}. We therefore get, with some elementary manipulations and using \rif{elem-prop} repeatedly 
\begin{eqnarray}
\notag && \mean{B_{\tau R}} |u-(u)_{B_{\tau R}}|^p \, dx  \leq  2^p \mean{B_{\tau R}} |u-(\tilde v)_{B_{\tau R}}|^p \, dx\\ \notag && \qquad \leq c\mean{B_{\tau R}} |\tilde v-(\tilde v)_{B_{\tau R}}|^p \, dx + c\tau^{-n}\mean{B_{ R}} |u-\tilde v|^p \, dx\\
&& \qquad  \leq c\left\{\tau^p + \tau^{-n} \eps_1 \right\}\mean{B_{R}}
|u - (u)_{B_{R}}|^p\, dx   + c\tau^{-n}\mean{B_{ R}} |u-\tilde v|^p \, dx\;.
\label{lasty}
\end{eqnarray} 
Using \rif{comp11-ancora} to estimate the last integral in the above display, and performing some other standard manipulations, we further get
\begin{eqnarray*}
&&\int_{B_{\tau R}} \left|\frac{u-(u)_{B_{\tau R}}}{\tau R}\right|^p \, dx\notag 
\\&& \quad  \leq c\left\{\tau^{n} + c_\eps \tau^{-p}  \omega(R^\gamma)+ \tau^{-p}\eps_1 + \tau^{-p}\eps \right\}\int_{B_{R}}
H(x, Du)\, dx\;.
\end{eqnarray*}
This holds for every $\tau \in (0, 1/16)$ and $c\equiv c (\datao, M)$. Then we select $\eps_1=\bar \eps/2$ and finally, we take $R_*\leq R^*$ such that $\omega(R_*^{\gamma})\leq \bar \eps/2$. This choice determines the dependence described in \rif{raggiostella} and yields
\eqn{stimadec0-dopo}
$$
\int_{B_{\tau R}} \left|\frac{u-(u)_{B_{\tau R}}}{\tau R}\right|^p \, dx 
\leq c\left\{\tau^{n} + c_\eps \tau^{-p}  \bar \eps+ \tau^{-p}\eps \right\}\int_{B_{R}}
H(x, Du)\, dx\;.
$$
On the other hand 
the validity \rif{applica-pt} implies that applicability conditions of Lemma \ref{cacc-gen2} 
are verified with with $B_R$ 
replaced by $B_{\tau R}$. We therefore deduce
$$\int_{B_{\tau R/2}}H(x, Du)\, dx \leq c\int_{B_{\tau R}} \left|\frac{u-(u)_{B_{\tau R}}}{\tau R}\right|^p \, dx\;.$$
Using this last inequality together with \rif{stimadec0-dopo} (and renaming $\tau/2$ in $\tau$) yields \rif{seconda-p1} and the proof is complete.\end{proof}

 
\section{$(p,q)$-phase}\label{fasepq}
In this section we consider the condition which is complementary to  \rif{applica-p}, that is 
\rif{applica}. This time we consider it with {\em the special choice $s=0$}, i.e., 
\eqn{applica-real}
$$
a_{{\rm i}}(B_{R}) > M[a]_{0,\alpha}R^{\alpha}\qquad \quad  \mbox{for some}\ M\geq 1\;.$$ 
Notice that we can actually always reduce to this case since \rif{applica} implies \rif{applica-real} for every $s $ as in \rif{esseemme} (recall that we always take $R\leq 1$). 
We consider the different frozen functional 
\eqn{frolast2}
$$
\mathcal F_{B}(w):=   \int_{B_{R}} F(x_B, (u)_{B_{R}}, Dw) \, dx\;,$$
and define $h \in u + W^{1,p}_0(B_{R})$ as the solution to the following Dirichlet problem:
\eqn{var-dir2}
$$
\left\{
    \begin{array}{c}
   \displaystyle  \tilde h \mapsto \min_w
\, \mathcal F_{B}(w)    
\\[8pt]
        w \in u + W^{1,p}_0(B_{R})\;. 
\end{array}\right.
$$
As for \rif{coerc}, we get
\eqn{finito2}
$$ \int_{B_{R}} \left(|D \tilde h|^p+a_{{\rm i}}(B_{R})|D \tilde h|^q\right) \, dx \leq \frac{L}{\nu}\int_{B_{R}} \left(|Du|^p+a_{{\rm i}}(B_{R})|Du|^q\right) \, dx\;.$$
Notice that now it is $\tilde h \in W^{1,q}(B_{R})$. We then have the following direct analog of Lemma \ref{pcomp-dopo}. 
\begin{lemma}\label{pqcomp} Let $ u \in W^{1,p}(\Omega)$ be a local minimiser of the functional $\mathcal F$ defined in \trif{funF}, under the assumptions \trif{growtH}, \trif{assF} and \trif{convex-omega-2}. Assume that either \trif{bound1} or \trif{bound2} is satisfied. Let $B_{R}\Subset \Omega_0\Subset \Omega$ be a ball with $R\leq 1$ and such that the condition \trif{applica-real} is in force. 
Then there exists a positive constant $c\equiv c(\datao)$, which is independent of $M$, such that the following estimate holds:
\begin{eqnarray}
\nonumber &&\mean{B_{R}} \left( |V_p(Du)-V_p(D \tilde h )|^2+ a(x) |V_q(Du)-V_q(D \tilde h )|^2\right)\, dx\\
&&\quad  \leq c\left[\omega(R^\gamma) + \frac 1M\right] \mean{B_{R}} H(x, Du) \, dx\;.
\label{comp11-pq}
\end{eqnarray}
Here $\gamma\equiv \gamma (\datao)\in (0,1)$ is the H\"older continuity exponent defined in Theorem \ref{HO}. 
Moreover, it holds that
\eqn{finito3}
$$ \int_{B_{R}} H(x,D \tilde h) \, dx \leq \frac{3L}{\nu}\int_{B_{R}} H(x,Du) \, dx\;.$$
\end{lemma}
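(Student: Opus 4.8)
\emph{Plan.} I will compare $u$ directly with $\tilde h$ via the convexity device already used for Lemma~\ref{pqcomp-p}, the only genuinely new point being the control of the error produced by freezing the $x$-variable. Throughout, put $H_0(z):=|z|^p+\ai(B_R)|z|^q$, so that $H_0(z)=H(x_B,z)\le H(x,z)$ for $x\in B_R$, while Remark~\ref{propaga4} gives $a(x)\le 3\ai(B_R)$ on $B_R$, i.e.\ also $H(x,z)\le 3H_0(z)$. Then \rif{finito3} follows: by \rif{finito2} and $H_0\le H(x,\cdot)$ we have $\int_{B_R}H_0(D\tilde h)\,dx\le(L/\nu)\int_{B_R}H(x,Du)\,dx$, hence $\int_{B_R}H(x,D\tilde h)\,dx\le 3\int_{B_R}H_0(D\tilde h)\,dx\le(3L/\nu)\int_{B_R}H(x,Du)\,dx$; moreover, since $a(x)\le 3\ai(B_R)$, it suffices to prove \rif{comp11-pq} with $\ai(B_R)$ in place of $a(x)$. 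Finally, truncating $\tilde h$ at the levels $\sup_{B_R}u$ and $\inf_{B_R}u$ and using minimality of $\tilde h$ with \rif{growtH} exactly as in the derivation of \rif{stimaosc} forces $(\tilde h-k)_\pm\equiv 0$, hence $\osc_{B_R}\tilde h\le\osc_{B_R}u$; together with \rif{stimelocali} this gives $|u-(u)_{B_R}|\le cR^\gamma$ and $|\tilde h-(u)_{B_R}|\le cR^\gamma$ on $B_R$, with $c=c(\datao)$.

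\emph{The freezing estimate.} The key new ingredient is the pointwise bound
$$
|F(x_B,v,z)-F(x,v,z)|\le c\,\omega(|x_B-x|)\,H(x,z)+c\,|a(x_B)-a(x)|\,|z|^q\,,\qquad c=c(p,q,L)\,,
$$
valid for all $x\in B_R$, $v\in\er$, $z\in\er^n$. Since $H(y,0)=0$, \rif{growtH} forces $F(y,v,0)=0$; hence, for $y\in\{x,x_B\}$, one may write $F(y,v,z)=\int_0^1\langle\partial F(y,v,tz),z\rangle\,dt$, the map $t\mapsto F(y,v,tz)$ being absolutely continuous on $[0,1]$ by \rif{assF}$_1$. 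Subtracting these two representations, applying \rif{assF}$_3$ to $tz$, using $H(y,tz)=t^p|z|^p+t^qa(y)|z|^q$, $a(x_B)=\ai(B_R)\le a(x)$, and the finiteness of $\int_0^1 t^{p-1}\,dt$ and $\int_0^1 t^{q-1}\,dt$, gives the estimate.

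\emph{The comparison.} As $z\mapsto F(x_B,(u)_{B_R},z)$ is $C^2$ off the origin, strictly convex and of $(p,q)$-growth, $\tilde h$ solves $\mean{B_R}\langle\partial F(x_B,(u)_{B_R},D\tilde h),D\varphi\rangle\,dx=0$ for every $\varphi\in W^{1,1}_0(B_R)$ with $H_0(D\varphi)\in L^1(B_R)$ (as in the derivation of \rif{el1}); the choice $\varphi=u-\tilde h$ is admissible since $u,\tilde h\in W^{1,q}(B_R)$, because $\ai(B_R)>0$ while $H(\cdot,Du)\in L^1$ and $H_0(D\tilde h)\in L^1$. Testing with $u-\tilde h$ and invoking the convexity inequality \rif{el2} at the point $x_B$ (which is licit since it holds for every $x\in\Omega$), with $z_1=D\tilde h$ and $z_2=Du$, one gets
$$
\mean{B_R}\big(|V_p(Du)-V_p(D\tilde h)|^2+\ai(B_R)|V_q(Du)-V_q(D\tilde h)|^2\big)\,dx\le c\mean{B_R}\big[F(x_B,(u)_{B_R},Du)-F(x_B,(u)_{B_R},D\tilde h)\big]\,dx\,.
$$
Then I would split the integrand on the right as a telescoping sum of five differences, interpolating through $F(x,(u)_{B_R},Du)$, $F(x,u,Du)$, $F(x,\tilde h,D\tilde h)$ and $F(x,(u)_{B_R},D\tilde h)$, and estimate them separately: the difference $F(x,u,Du)-F(x,\tilde h,D\tilde h)$ integrates to a quantity $\le 0$ by minimality of $u$ (with $\tilde h$, extended by $u$ outside $B_R$, as a competitor); the two differences involving only the $v$-variable are bounded by $c\,\omega(R^\gamma)H(x,Du)$ and $c\,\omega(R^\gamma)H(x,D\tilde h)$ via \rif{assF}$_4$, concavity and monotonicity of $\omega$ and the oscillation bounds above; and the two $x$-freezing differences are bounded, using the freezing estimate together with $|a(x_B)-a(x)|\le 2[a]_{0,\alpha}R^\alpha<(2/M)\ai(B_R)\le(2/M)a(x)$ — which is precisely where the $(p,q)$-phase condition \rif{applica-real} is used — by $c[\omega(R)+1/M]H(x,Du)$ and $c[\omega(R)+1/M]H(x,D\tilde h)$. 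Integrating, absorbing the terms carrying $H(x,D\tilde h)$ into $\mean{B_R}H(x,Du)\,dx$ via \rif{finito3}, using $\omega(R)\le\omega(R^\gamma)$ (since $R\le 1$, $\gamma\le 1$), and finally restoring $a(x)$ in place of $\ai(B_R)$, one arrives at \rif{comp11-pq}; all constants depend only on $\datao$, and since the coefficient of $1/M$ is universal they are independent of $M$.

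\emph{Main difficulty.} The delicate step is the freezing estimate: \rif{assF} only provides a modulus of continuity in $x$ for the gradient $\partial F$, not for $F$ itself, so the freezing error has to be extracted through the representation $F=\int_0^1\langle\partial F(\cdot,tz),z\rangle\,dt$ and a check of integrability near $t=0$. Once that is available, the remainder is a routine adaptation of the proof of Lemma~\ref{pqcomp-p}.
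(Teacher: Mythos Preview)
Your proof is correct and follows essentially the same route as the paper's: derive the freezing bound for $F$ from \rif{assF}$_3$ via the representation $F=\int_0^1\langle\partial F,z\rangle\,dt$, use \rif{el2} at the fixed point $x_B$ together with the Euler--Lagrange equation for $\tilde h$, telescope, and control the $x$-freezing error by $1/M$ through \rif{applica-real}. The only cosmetic difference is that the paper uses a six-term telescoping (inserting $(\tilde h)_{B_R}$ as an intermediate $v$-value) while you use five, going directly from $F(x,\tilde h,D\tilde h)$ to $F(x,(u)_{B_R},D\tilde h)$; your shortcut is justified by $|\tilde h-(u)_{B_R}|\le\osc_{B_R}u\le cR^\gamma$, which you correctly note.
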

\begin{proof} First, let us observe that assumptions 
\rif{assF}
also imply that 
\begin{eqnarray}
&&\notag | F(x_1,v,z)-F(x_2,v,z)|\\
&& \qquad \leq L\, \omega (|x_1-x_2|)\left[H(x_1,z)+H(x_2,z) \right] + L|a(x_1)-a(x_2)||z|^{q}
\label{ulteriore}
 \end{eqnarray}
holds for every choice of $x_1, x_2 \in \Omega$, $v \in \er$ and $z \in \er^n$. 
Indeed, we can write 
\begin{eqnarray*}
|F(x_1,v,z)-F(x_2,v,z)| &=& \left| \int_0^1 \big\langle \partial F (x_1,v,tz) -\partial F (x_2,v,tz), z\big\rangle\,  dt \right|\\
&\leq& \int_0^1|\partial F (x_1,v,tz) -\partial F (x_2,v,tz)|\,  dt\,|z|\;,
\end{eqnarray*}
so that \rif{ulteriore} follows by \rif{assF}. 
Notice that, in order to derive the equality in the above display, we have used that $F(x_1,v,0)=F(x_2,v,0)= 0$, which is in turn implied by \rif{growtH}. Next, observe the basic consequence of 
\rif{applica}
\eqn{thismeans}
$$
a(x)\leq a(x)-a_{{\rm i}}(B_R) + a_{{\rm i}}(B_R) \leq 2[a]_{0, \alpha}R^\alpha + a_{{\rm i}}(B_R)\leq 3 a_{{\rm i}}(B_R)\leq 3 a(x)\;,
$$
that holds for every $x \in B_R$. 
This and \rif{finito2} easily imply \rif{finito3}. 
We now proceed as in Lemma \ref{pqcomp-p} and arrive at \rif{connecting}, which is this time replaced by 
\begin{eqnarray}
\notag &&\hspace{-5mm}  \int_{B_{R}} \left(|V_p(Du)-V_p(D \tilde h )|^2+ a_{{\rm i}}(B_R) |V_q(Du)-V_q(D \tilde h )|^2\right) \, dx\\ 
\notag && \leq c\int_{B_{R}} \left[ 
F(x_B, (u)_{B_{R}}, Du)-F(x_B, (u)_{B_{R}}, D \tilde h )\right] \, dx \\
\notag && = c\int_{B_{R}} \left[ 
F(x_B, (u)_{B_{R}}, Du)-F(x, (u)_{B_{R}}, Du)\right] \, dx \\
\notag  && \quad +  c\int_{B_{R}} \left[ 
F(x, (u)_{B_{R}}, Du)-F(x, u, Du)\right] \, dx \\
\notag && \quad + c\int_{B_{R}} \left[ 
F(x, u, Du)-F(x, \tilde h, D \tilde h )\right] \, dx \\
\notag && \quad +  c\int_{B_{R}} \left[ 
F(x, \tilde h, D \tilde h )-F(x, (\tilde h)_{B_{R}}, D \tilde h )\right] \, dx \\
\notag && \quad +  c\int_{B_{R}} \left[ 
F(x, (\tilde h)_{B_{R}}, D \tilde h )-F(x, (u)_{B_{R}}, D \tilde h )\right] \, dx \\
&& \quad +  c\int_{B_{R}} \left[ 
F(x, (u)_{B_{R}}, D \tilde h )-F(x_B, (u)_{B_{R}}, D \tilde h )\right] \, dx=: c \sum_{i=1}^6 II_i\;.\label{combw}
\end{eqnarray} 
We can now proceed estimating the terms $II_1, \dots, II_6$ essentially using \rif{assF}$_{3,4}$ and \rif{applica}. 
We have
\begin{eqnarray*}II_1 + II_6 & \stackleq{ulteriore} &  
c\, \omega (2R) \int_{B_{R}}\left[H(x_B, Du)+ H(x_B, D \tilde h)\right]\, dx\\
 & &  \ +
c\, \omega (2R) \int_{B_{R}}\left[H(x, Du)+ H(x, D \tilde h)\right]\, dx\\ && \  + c [a]_{0,\alpha}R^\alpha
 \int_{B_{R}}\left(|Du|^q + |D \tilde h|^q\right) \, dx\\
 & \leq&  
c\, \omega (2R) \int_{B_{R}}\left[H(x, Du)+ H(x, D \tilde h)\right]\, dx \\ && \  + c [a]_{0,\alpha}R^\alpha
 \int_{B_{R}}\left(|Du|^q + |D \tilde h|^q\right) \, dx\\
& \stackleq{applica-real} &  
c\, \omega (2R) \int_{B_{R}}\left[H(x, Du)+ H(x, D \tilde h)\right]\, dx \\ && \  +  \frac cM \int_{B_{R}}
a(x)\left(|Du|^q + |D \tilde h|^q\right) \, dx\\
&\stackleq{finito3} &c\left[\omega(R) + \frac 1M\right]\int_{B_{R}}H(x, Du)\, dx\;.
\end{eqnarray*}
The estimates for the terms $II_2, II_4$ and $II_5$ can be now be obtained as in the case of the analogous terms 
$I_1, I_3$ and $I_4$, respectively, considered in Lemma \ref{pqcomp-p}. This yields, by also repeatedly using \rif{finito3}
$$
II_2 + II_4 + II_5 \leq c\, \omega (R^\gamma) \int_{B_{R}}H(x, Du)\, dx
$$
for a constant $c \equiv c(\datao)$. 
Finally, by the minimality of $u$ it follows that $II_3 \leq 0$. By combining the estimates found for the terms $II_1, \ldots, II_6$ with \rif{combw} and, taking \rif{thismeans} into account, we conclude with \rif{comp11-pq} and the proof is complete. 
\end{proof}
We can now prove the following $(p,q)$-phase counterpart of Lemma \ref{pcomp3p}:
\begin{lemma}\label{pqcomp2} Under the assumptions and notations of Lemma \ref{pqcomp}, for every number $\tau\in (0,1/2)$, the inequality
\eqn{seconda-p2} 
$$
\int_{B_{\tau R}}H(x, Du)\, dx \leq c_{p,q}\left\{\tau^n+ \omega(R^\gamma) + \frac1M\right\} \int_{B_{R}}H(x, Du)\, dx\;, 
$$
holds for a constant $c_{p,q} \equiv c_{p,q} (\datao)$, which is otherwise independent of $M$ and $\tau$. 
\end{lemma}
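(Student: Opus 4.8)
The plan is to compare $u$ with the frozen minimiser $\tilde h$ defined in \rif{var-dir2}, combining the a priori Lipschitz bound available for $\tilde h$ with the comparison estimate \rif{comp11-pq}. The algebraic starting point is the pointwise inequality
$$
H(x, Du) \leq 2H(x, D\tilde h) + 2\big(|V_p(Du) - V_p(D\tilde h)|^2 + a(x)|V_q(Du) - V_q(D\tilde h)|^2\big)\,,
$$
which follows at once from $|Du|^p = |V_p(Du)|^2 \leq 2|V_p(D\tilde h)|^2 + 2|V_p(Du) - V_p(D\tilde h)|^2$, the analogous bound for the $q$-powers, and the definition of $H$. Integrating over $B_{\tau R}$ and enlarging the domain of the last integral to $B_{R}$, the matter reduces to estimating $\int_{B_{\tau R}} H(x, D\tilde h)\, dx$ and $\int_{B_{R}}(|V_p(Du) - V_p(D\tilde h)|^2 + a(x)|V_q(Du) - V_q(D\tilde h)|^2)\, dx$ separately.

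The second integral is controlled by $c[\omega(R^\gamma) + 1/M]\int_{B_{R}} H(x, Du)\, dx$ simply by rescaling the average estimate \rif{comp11-pq}, with $c \equiv c(\datao)$ independent of $M$; this is why the error term enters only to the first power, no use of Young's inequality on the $V$-differences being needed. For the first integral I would first note that \rif{thismeans} gives $a_{{\rm i}}(B_{R}) \leq a(x) \leq 3a_{{\rm i}}(B_{R})$ on $B_{R}$, whence $H(x, D\tilde h) \leq 3H_0(D\tilde h)$ with $H_0(z) := |z|^p + a_{{\rm i}}(B_{R})|z|^q$. Since $\tilde h$ is a local minimiser of the frozen functional in \rif{frolast2}, whose integrand $z \mapsto F(x_B, (u)_{B_{R}}, z)$ satisfies growth and ellipticity conditions of type \rif{assFhh} relative to $H_0$ — a direct consequence of \rif{growtH} and \rif{assF} evaluated at $x = x_B$, recalling $a(x_B) = a_{{\rm i}}(B_{R})$ — the Lipschitz a priori estimate of Lieberman \cite{Lieb} (see also \cite{Baroni}), already invoked in \rif{stima-lieb1}, yields $\sup_{B_{R/2}} H_0(D\tilde h) \leq c\mean{B_{R}} H_0(D\tilde h)\, dx$ for a constant $c \equiv c(n,p,q,\ratio)$, in particular independent of $a_{{\rm i}}(B_{R})$ and hence of $M$.

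Consequently, for every $\tau \in (0, 1/2)$ one has $B_{\tau R} \subseteq B_{R/2}$ and therefore
$$
\int_{B_{\tau R}} H(x, D\tilde h)\, dx \leq 3\int_{B_{\tau R}} H_0(D\tilde h)\, dx \leq c\,|B_{\tau R}|\,\sup_{B_{R/2}} H_0(D\tilde h) \leq c\,\tau^n\int_{B_{R}} H_0(D\tilde h)\, dx \leq c\,\tau^n\int_{B_{R}} H(x, Du)\, dx\,,
$$
where in the last step I use $H_0(z) \leq H(x, z)$ on $B_{R}$ together with the frozen energy bound \rif{finito3}. Putting the two estimates together gives $\int_{B_{\tau R}} H(x, Du)\, dx \leq c_{p,q}\{\tau^n + \omega(R^\gamma) + 1/M\}\int_{B_{R}} H(x, Du)\, dx$ with $c_{p,q}$ depending only on $\datao$, which is \rif{seconda-p2}. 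The only genuinely delicate point is the $M$-independence of the constant in the gradient bound for $\tilde h$; this is precisely the uniformity already exploited in the $p$-phase, and it holds because the structure constants in \rif{assFhh} involve only $n,p,q,\ratio$, the remaining steps being a routine combination of the comparison estimate and the frozen energy inequality.
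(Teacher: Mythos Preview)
Your proof is correct and follows essentially the same route as the paper's: split $H(x,Du)$ into a frozen part $H(x,D\tilde h)$ controlled via Lieberman's Lipschitz estimate for $\tilde h$ (giving the $\tau^n$ term) and a $V$-difference part controlled by the comparison estimate \rif{comp11-pq} (giving the $\omega(R^\gamma)+1/M$ term), then use \rif{thismeans} and \rif{finito3} to close. The only cosmetic difference is that the paper phrases the decomposition with $a_{\rm i}(B_R)$ in the $V_q$-difference rather than $a(x)$, which is equivalent up to a factor of $3$ by \rif{thismeans}.
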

\begin{proof} The integrand $z \mapsto F(x_B, (u)_{B_{R}}, z)$ of the functional in \rif{frolast2} satisfies assumptions \rif{growtH} and \rif{assF}$_{1,2}$ with $a(x)\equiv \ai(B_{R})$. We are again in position to apply the results from \cite{Lieb}, yielding the a priori estimate
$$
\sup_{B_{R/2}}\,\left( |D \tilde h |^p+a_{{\rm i}}(B_R)|D \tilde h |^q\right) \leq c \mean{B_{R}} \left(|D \tilde h |^p+a_{{\rm i}}(B_R)|D \tilde h |^q\right)\, dx
$$
for $c$ depending only on $n, p, q, \ratio $. Thanks to \rif{thismeans}, the last inequality commutes into
\eqn{apriorisup}
$$
\sup_{B_{R/2}}\,H(x, D \tilde h) \stackleq{thismeans} \mean{B_{R}} H(x, D \tilde h)\, dx  \;.
$$
Then we proceed comparing $\tilde h$ and $u$ as follows:
\begin{eqnarray*}
&&  \int_{B_{\tau R}} H(x, Du)\, dx \leq  c\int_{B_{\tau R}} H(x, D \tilde h)\, dx \\
&& \qquad \quad +c \int_{B_{R}} \left(|V_p(Du)-V_p(D\tilde h)|^2+a_{{\rm i}}(B_R)|V_q(Du)-V_q(D\tilde h)|^2\right)\, dx\\
&& \quad \stackrel{\rif{comp11-pq}, \rif{apriorisup}}{\leq} c\tau^n \int_{B_{R}} H(x, D \tilde h)\, dx
+c\left[\omega(R^\gamma) + \frac 1M\right] \int_{B_{R}} H(x, Du) \, dx\;,
\end{eqnarray*}
where $c\equiv c (\datao)$. 
Using \rif{finito3} to estimate the first integral in the last line finally yields \rif{seconda-p2} and the proof is complete. 
\end{proof}

\section{Morrey decay, exit times and Theorem \ref{main2}}\label{morreysec}
In this section we prove Theorem \ref{main2}. The proof employes \rif{convex-omega-2} but not \rif{convex-omega}, 
and follows combining Lemmas \ref{pcomp3p} and \ref{pqcomp2} from the last two sections. We shall use an exit time argument, that, in a different form, has been introduced in \cite{CM1, CM2} to treat the regularity of minimizers of the model functional $\mathcal P$ in \rif{model}. The final outcome is the Morrey type decay estimate \rif{morrey}. This eventually implies the local H\"older continuity of $u$ for every exponent $\theta<1$ claimed in Theorem \ref{main2} via the usual integral characterization of Campanato and Meyers (see for instance \cite[Chapter 2]{G}) and a standard covering argument. We start fixing a number $\sigma \in (0,n)$; moreover, for every ball $B_{R}\equiv B_{R}(x_0)\Subset \Omega$ we abbreviate as follows:
$$
\mathcal H(\varrho):= \int_{B_{\varrho}(x_0)} H(x,Du)\, dx \qquad \forall \ \varrho \leq  R\;.
$$
\indent {\em Step 1: Iteration in the $(p,q)$-phase.} Let us consider a ball $B_{R}\equiv B_{R}(x_0)\Subset \Omega_0\Subset \Omega$, $R\leq 1$ and assume that \rif{applica} holds for some $M\geq 1$ to be chosen in a few lines. We start recalling the basic property in \rif{ognir}. 
This, in perspective, allows to iteratively apply Lemma \ref{pqcomp2} on all smaller scales once it can be applied on a fixed, initial scale.
When using Lemma \ref{pqcomp2}, inequality \rif{seconda-p2} can be rewritten as
\eqn{HH1}
$$
\mathcal H(\tau R) \leq \tau^{n-\sigma}\left\{c_{p,q} \tau^\sigma + c_{p,q}  \tau^{\sigma-n}\omega(R^\gamma)+ \frac{ c_{p,q} \tau^{\sigma-n}}M\right\} \mathcal H( R)\;,
$$
where the constant $c_{p,q} $ has been determined in Lemma \ref{pqcomp2} as a function of $\datao$ but not of $M$. We start taking 
$\tau \equiv  \tau_1$ small enough in order to get 
\eqn{scelta1}
$$ \tau_1^\sigma\leq \frac1{3c_{p,q}} \Longrightarrow \tau_1\equiv \tau_1(\datao, \sigma)\;.$$ With this choice we determine a (potentially small) radius $R_1\equiv R_1(\datao, \sigma)\leq 1$ and a (potentially large) number $M\equiv M(\datao, \sigma) \geq 1 $, in order to satisfy the two inequalities 
\eqn{scelta2}
$$
 \omega(R_1^\gamma) \leq \frac {\tau^{n-\sigma}_1}{3c_{p,q} } \qquad \mbox{and}\qquad \frac{ 1}M \leq 
\frac {\tau_1^{n-\sigma}}{3c_{p,q} }\;.
$$
With such choices \rif{HH1} implies
\eqn{HH2}
$$
\mathcal H(\tau_1 R) \leq \tau_1^{n-\sigma}\mathcal H( R)\;, 
$$
for every $R$ such that $R \leq R_1$. 
By using \rif{ognir} we deduce that since \rif{applica} holds, then we also have $a_{{\rm i}}(B_{\tau^k R}) > M[a]_{0,\alpha}(\tau^kR)^{\alpha-s}$ holds for every integer $k \geq 0$. We can therefore iterate \rif{HH2}, thereby obtaining 
\eqn{HH3}
$$
\mathcal H(\tau_1^k R) \leq \tau_1^{(n-\sigma)k}\mathcal H( R) \qquad  \mbox{for every integer} \ k \geq 0\;. 
$$
\indent {\em Step 2: Decay in the $p$-phase.} The choices in \rif{scelta1} and \rif{scelta2} fix, in particular, the value of the threshold constant $M$ as a function of $\datao$ and $\sigma$. This is the value we are going to use here when applying Lemma \ref{pcomp3p}; in particular, this choice determines $c_p$ and $s_0$ as functions of $\datao$ and $\sigma$ only. Moreover, we now take a fixed choice of the exponent $s$ in \trif{applica-pt}, say 
$s :=s_m/2$. With $M$ and $s$ having been fixed, the radius $R_*$ from \rif{raggiostella0} is now a function of $\datao, \sigma$ and $\bar \eps$. 
Again, let us consider, as in Step 1, a ball $B_{R}\Subset \Omega_0\Subset \Omega$. This will be here considered with a radius $R$ that will initially be such that $R \leq R_1$ and $R_1\equiv R_1(\datao, \sigma)\leq 1$ has been determined in the previous step; further restrictions will be taken in a short while. In order to apply 
Lemma \ref{pcomp3p} with the mentioned choice of $M$, we look at estimate \rif{seconda-p1}, that reads as 
\eqn{seconda-pp} 
$$
\mathcal H (\tau R) \leq \tau^{n-\sigma}\left\{c_p\tau^\sigma+ c_pc_\eps \tau^{\sigma-n-p} \bar \eps + c_p\tau^{\sigma-n-p}\eps\right\} \mathcal H (R)\;, 
$$
for every choice of $\bar \eps, \eps \in (0,1) $, provided we are choosing $R \leq R_*(\datao, \sigma,  \bar \eps)$. 
We choose $\tau\equiv \tau_2$ such that
\eqn{scelta3}
$$ \tau_2^\sigma\leq \frac1{3c_{p}} \Longrightarrow \tau_2\equiv \tau_2(\datao, \sigma)\;.$$
We then proceed with the choice of $\eps\equiv \eps(\datao, \sigma)$ such that 
\eqn{scelta33} 
$$
\eps \leq \frac{\tau^{n+p-\sigma}}{3c_p}\;.
$$
This, in turn, determines the constant $c_{\eps}$ in \rif{seconda-pp} as a function of $\datao$ and $\sigma$. We then take $\tilde \eps\equiv \bar \eps(\datao, \sigma) $ small enough to guarantee that
\eqn{scelta4} 
$$
 \bar \eps \leq \frac{\tau^{n+p-\sigma}}{3c_pc_\eps}\;.
$$
The choice in \rif{scelta4}, via \rif{raggiostella0}, in turn determines a radius 
\eqn{raggiofinale}
$$R_2:=R_*(\datao, \sigma)\leq R_1(\datao, \sigma)\leq 1$$
such that \rif{seconda-pp} is satisfied according to the prescription of Lemma \ref{pcomp3p}. 

Summarizing, with the choices of $\tau_2$ and $R_2$ displayed in \rif{scelta3}-\rif{scelta4} and used in \rif{seconda-pp}, we can conclude that if $R\leq R_2\leq R_1$ and if the condition 
\eqn{iteracond} 
$$
a_{{\rm i}}(B_{\tau_2 R}) \leq M[a]_{0,\alpha}(\tau_2 R)^\alpha\;,
$$ 
is satisfied, then we have
\eqn{HH4}
$$
\mathcal H(\tau_2 R) \leq \tau^{n-\sigma}_2\mathcal H(R)\;. 
$$

{\em Step 3: Exit time and iteration.} We combine \rif{HH3} and \rif{HH4} via the announced exit time argument and take a general ball $B_{R}\equiv B_{R}(x_0)\Subset \Omega_0\Subset \Omega$ with $R\leq R_2$ and $R_2$ as in \rif{raggiofinale}. By looking at \rif{iteracond}, we consider conditions of the type
\eqn{iteracond2} 
$$
a_{{\rm i}}\left(B_{\tau^{k+1}_2 R}\right) \leq M[a]_{0,\alpha}\left(\tau^{k+1}_2 R\right)^\alpha\;, 
$$ 
for every integer $k \geq 0$. 
We define the exit time index
\eqn{exit-index}
$$
m :=\min\{k\in \en \, \colon \, \mbox{condition \rif{iteracond2} fails}\}
$$
(notice that it can happen that $m=0$). By the content of Step 2, and in particular applying \rif{HH4} repeatedly, we have
\eqn{itera4} 
$$
\mathcal H(\tau_2^k R) \leq \tau_2^{(n-\sigma)k}\mathcal H(R)\qquad \forall \ k \in \{0, \ldots, m\}\;.
$$
Condition \rif{iteracond2} fails for $k=m$ and this means
\eqn{occurrenceof}
$$
a_{{\rm i}}\left(B_{\tau_2^{m+1}R}\right) > M[a]_{0,\alpha}(\tau_2^{m+1}R)^\alpha\;.$$ The occurrence of \rif{occurrenceof} allows to apply the argument of Step 1, and in particular allows to use \rif{HH3} (with $B_{R}$ now replaced by $B_{\tau_2^{m+1}R}$); we therefore conclude that
\eqn{itera5} 
$$
\mathcal H(\tau_1^{k}\tau_2^{m+1} R) \leq \tau_1^{(n-\sigma)k}\mathcal H(\tau_2^{m+1}R)\qquad \mbox{for every integer} \ k \geq 0\;.
$$  
\indent {\em Step 4: Conclusion.} We are ready to establish \rif{morrey}. For this we first assume that $0 < \varrho < R\leq R_2$ and $R_2$ is in \rif{raggiofinale}. We then distinguish various cases. The first occurs when $\tau_2^{m+1}R \leq \varrho$ and $m$ is in \rif{exit-index}; in this situation we find $\bar k \in \{0, \ldots, m\}$ such that $\tau_2^{\bar k+1}R \leq \varrho \leq \tau_2^{\bar k}R$ and therefore $\tau_2^{\bar k+1} \leq \varrho/R$. By using this last relation we can estimate
\begin{eqnarray}
\notag \mathcal H(\varrho) \leq \mathcal H(\tau_2^{\bar k}R) &\stackleq{itera4} & \tau_2^{(n-\sigma)\bar k}\mathcal H( R) \\ &\leq & \tau_2^{\sigma-n}
(\varrho/R)^{n-\sigma}\mathcal H( R) \stackrel{\rif{scelta3}}{\equiv} c(\datao, \sigma) (\varrho/R)^{n-\sigma}\mathcal H( R) \label{hh3}\;,
\end{eqnarray}
where $c \equiv c (\datao, \sigma)$; therefore \rif{morrey} follows in this case provided $R\leq R_2$. We then consider the occurrence of $\varrho < \tau_2^{m+1}R$, and make a further distinction. We first treat the case 
when $\tau_1\tau_2^{m+1}R \leq \varrho $. In this situation we estimate
\begin{eqnarray*}
\notag
\mathcal H(\varrho) \leq \mathcal H(\tau_2^{m+1}R)  &\stackleq{hh3} & c\tau_2^{(n-\sigma)(m+1)}\mathcal H( R)
\\ &\leq & c\tau_1^{\sigma-n}
(\varrho/R)^{n-\sigma}\mathcal H( R) \\ &\stackrel{\rif{scelta1}}{\equiv}& c(\datao, \sigma) (\varrho/R)^{n-\sigma}\mathcal H( R)\;.
\end{eqnarray*}
Finally, in the case $\varrho < \tau_1\tau_2^{m+1}R$ we again find $\tilde k\geq 1$ such that $\tau_1^{\tilde k+1}\tau_2^{m+1}R \leq \varrho \leq \tau_1^{\tilde k}\tau_2^{m+1}R$ and therefore we have
\begin{eqnarray*}
\notag
\mathcal H(\varrho) \leq \mathcal H(\tau_1^{\tilde k}\tau_2^{m+1}R)  &\stackleq{itera5} & \tau_1^{(n-\sigma)\tilde k}\mathcal H(\tau_2^{m+1}R)
\\ &\stackleq{hh3} & c\left(\tau_1^{\tilde k}\tau_2^{m+1}\right)^{n-\sigma}\mathcal H(R) \\
&\leq &  c\tau_1^{\sigma-n}
(\varrho/R)^{n-\sigma}\mathcal H( R) \equiv c (\varrho/R)^{n-\sigma}\mathcal H( R)\;,  
\end{eqnarray*}
where again it is $c\equiv c(\datao, \sigma)$ again by the dependence on the constants in \rif{scelta1}. 
Summarizing the estimates in the last three displays, we have established 
\eqn{hh1000}
$$
\mathcal H(\varrho) \leq c(\datao, \sigma) (\varrho/R)^{n-\sigma} \mathcal H(R) \qquad 0 < \varrho \leq R \leq R_2\;, 
$$
so that \rif{morrey} follows in this case too provided $R \leq R_2$, $R_2$ is as in \rif{raggiofinale}. We finally settle the remaining  case $R_2 < R \leq 1$. For $R_2 \leq \varrho \leq R\leq 1$, we can trivially estimate
\begin{eqnarray*}\mathcal H(B_{\varrho}) &\leq & (\varrho/R)^{\sigma-n} (\varrho/R)^{n-\sigma}\mathcal H(B_{R})\\ & \leq &R_2^{\sigma-n}(\varrho/R)^{n-\sigma}\mathcal H(B_{R})\stackrel{\rif{raggiofinale}}{\equiv} c(\datao, \sigma)(\varrho/R)^{n-\sigma}\mathcal H(B_{R})\;.
\end{eqnarray*} On the other hand, if 
$\varrho \leq R_2 \leq R\leq 1$, we similarly have
$$\mathcal H(B_{\varrho})
 \stackleq{hh1000} c (\varrho/R_2)^{n-\sigma}\mathcal H(B_{R_2})
   \leq  R_2^{\sigma-n}(\varrho/R)^{n-\sigma}\mathcal H(B_{R})
   \equiv c(\varrho/R)^{n-\sigma}\mathcal H(B_{R})\;.$$ 
We have therefore established the decay estimate in \rif{morrey} for every possible choice of $\varrho, R$ such that 
$0<\varrho \leq R\leq 1$ and the proof of Theorem \ref{main2} is complete. 

\section{Gradient continuity and Theorem \ref{main1}}\label{gradsec}
In this section we prove Theorem \ref{main1}. The way we combine the $p$- and $(p,q)$-phases is different. Specifically, we shall use condition \rif{applica}, again with $s=0$, but with $M$ depending on $R$, in a way that makes it blow-up when $R \to 0$. This would create a bad dependence on the constants when using Lemma \ref{pcomp} with the same choice of $s$. To eliminate this bad dependence we then consider the $p$-phase \rif{applica-p} with a suitable positive exponent $s$, depending on the way $M$ has been chosen. Iterating this argument along with a comparison scheme finally leads to desired H\"older continuity. To proceed, we first appeal to Theorem \ref{main2}. This, via a standard covering argument, gives that for every open subset $\Omega_0 \Subset \Omega$ and $\kappa>0$, there exists a constant $c\equiv c(\datao, \kappa)$ such that 
\eqn{morrey1}
$$
\mean{B_r} H(x, Du) \, dx \leq c r^{-\kappa}
$$
holds for every ball $B_r\Subset \Omega_0 \subset \Omega$. 
We then consider a ball $B_{R} \Subset \Omega_0$, $R\leq 1$, and assume that condition \rif{applica} holds with the choice $s=0$ and $M\equiv M(R)$ as follows:
\eqn{applica-fin}
$$
a_{{\rm i}}(B_{R}) > M(R)[a]_{0,\alpha}R^{\alpha} \quad \mbox{with} \quad M \equiv M(R):=R^{-s_{m}/2}\;,$$
where $s_m$ has been defined in \rif{esseemme}. Using \rif{comp11-pq} and recalling \rif{convex-omega}, we conclude that the inequality
\begin{eqnarray}
\notag &&\mean{B_{R}} \left(|V_p(Du)-V_p(D \tilde h )|^2+a_{{\rm i}}(B_{R})|V_q(Du)-V_q(D \tilde h )|^2\right)\, dx\\ && \quad  \leq cR^{\min\{\gamma\beta, s_m/2\}} \mean{B_{R}} H(x, Du) \, dx\;,\label{diffa1}
\end{eqnarray}
holds for a constant $c\equiv c(\datao)$. 
Here $\tilde h$ is defined as in \rif{var-dir1}.  
Then, we consider the case \rif{applica-fin} does not hold, and this leads to
\eqn{applica-fin2}
$$
a_{{\rm i}}(B_{R}) \leq  M(R)[a]_{0,\alpha}R^{\alpha}= [a]_{0,\alpha}R^{\alpha-s_m/2} \;.$$
This means that condition \rif{applica-p} holds with the choice $M=1$ and $s=s_m/2$. We are therefore in position to apply Lemma \ref{pcomp-dopo}, the advantage being now that, since now $M=1$, then the constant $c$ and the exponent $s_1$ appearing in \rif{comp11dd} are determined as functions $\datao$ while no dependence with respect to $R$ occurs. Inequality \rif{comp11dd} now gives
\begin{eqnarray}
\notag &&\mean{B_{R/8}} \left(|V_p(Du)-V_p(D \tilde h )|^2+a_{{\rm i}}(B_{R})|V_q(Du)-V_q(D \tilde h )|^2\right)\, dx\\ && \quad  \leq cR^{\min\{\gamma \beta s_1, s_1s_m/(2q)\}} \mean{B_{R}} H(x, Du) \, dx\;.\label{diffa2}
\end{eqnarray}
Summarizing the contents of \rif{diffa1} and \rif{diffa2} and combining them with \rif{morrey1} for the choice 
\eqn{ilkappa}
$$\kappa\equiv \kappa_1\equiv \kappa_1(\datao):=\frac 12\min\left\{\gamma\beta, \gamma\beta s_1,\frac{s_m}{2}, \frac{s_1s_m}{2q}\right\}\;,$$
we conclude with 
\eqn{diffVV}
$$
\mean{B_{R/8}} \left(|V_p(Du)-V_p(D \tilde h )|^2+a_{{\rm i}}(B_{R})|V_q(Du)-V_q(D \tilde h )|^2\right)\, dx 
\leq c R^{\kappa_1}\;,
$$
and $c\equiv c (\datao)$. 
Notice that here $\tilde h$ is defined either via \rif{var-dir2} or via Lemma \ref{pcomp-dopo} (and minimizes 
the functional in \rif{frolast22}), accordingly to which inequality \rif{applica-fin} or 
\rif{applica-fin2} comes into the play, respectively. Therefore it satisfies
\eqn{finito2222}
$$ \mean{B_{R/8}} \left(|D \tilde h|^p+a_{{\rm i}}(B_{R})|D \tilde h|^q\right) \, dx \leq c\mean{B_{R}} H(x, Du) \, dx\;,$$
for $c\equiv c (n, \ratio)$. This follows from estimates \rif{finito2} and \rif{stima-energia}.  
We can conclude with a comparison scheme, which follows with some variants the one in \cite{CM1}. Notice that, arguing exactly as in \rif{comequi2}-\rif{comequi3} (take $\theta\equiv 1$ and $a_0\equiv a_{{\rm i}}(B_{R})$ there), from the relation \rif{diffVV} it follows that
\eqn{final-diff}
$$
\mean{B_{R/8}} \left(|Du-D \tilde h |^p+a_{{\rm i}}(B_{R})|Du-D \tilde h |^q\right)\, dx \leq c R^{\kappa_2}\;,  
$$
for a new positive exponent $\kappa_2\equiv \kappa_2 (\kappa_1, n,p,q)\in (0,1)$, computable as in \rif{comequi2}-\rif{comequi3}
once $\kappa_1$ is fixed as in \rif{ilkappa}; all in all we have 
$\kappa_2\equiv \kappa_2 (\datao)$. We observe that in both cases \rif{var-dir1} and \rif{var-dir2}, $\tilde h$ is a minimizer of a functional whose integrand depends only on the gradient and satisfies assumptions \rif{assF}$_{1,2}$ with $a(x)\equiv \ai(B_{R})$. The theory in \cite{Lieb} applies and provides the following gradient H\"older decay estimate, valid whenever $0< \varrho \leq R/8$:
\begin{eqnarray}
\notag && \mean{B_\varrho} \left( |D\tilde h-(D\tilde h)_{B_\varrho}|^p+a_{{\rm i}}(B_{R})
|D\tilde h-(D\tilde h)_{B_\varrho}|^q\right)\, dx\\
&& \quad \leq c \left(\frac{\varrho}{R}\right)^{p\tilde \beta} \mean{B_{R}} (|D\tilde h|^p+a_{{\rm i}}(B_{R})|D\tilde h|^q)\, dx
\stackleq{finito2222} c \left(\frac{\varrho}{R}\right)^{p\tilde \beta} \mean{B_{R}} H(x, Du)\, dx\,, \label{Ldecay}
\end{eqnarray}
and where $c$ and $\tilde \beta$ both depend only on $n,p,q,\ratio$, but are otherwise independent of the specific value of 
$a_{{\rm i}}(B_{R})$; with no loss of generality we assume it is $p\tilde \beta<1$. Then, by also using \rif{elem-prop}, for $0< \varrho \leq R/8$ we have
\begin{eqnarray}
 \notag && \mean{B_\varrho} |Du-(Du)_{B_\varrho}|^p \, dx \leq   c
 \mean{B_\varrho} |D \tilde h-(D \tilde h)_{B_\varrho}|^p \, dx + c\mean{B_\varrho} |Du-D \tilde h|^p \, dx\\
   && \qquad \stackleq{Ldecay}  c
  \left(\frac{\varrho}{R}\right)^{p\tilde \beta}\mean{B_{R}} H(x, Du) \, dx + c\left(\frac{R}{\varrho}\right)^{n}
  \mean{B_{R/8}} |Du-D \tilde h|^p\, dx\notag \\ 
     && \ \ \ \stackrel{\rif{morrey1}, \rif{final-diff}}{\leq}  c
  \left(\frac{\varrho}{R}\right)^{p\tilde \beta}R^{-\kappa} + c\left(\frac{R}{\varrho}\right)^{n}
  R^{\kappa_2}
  \label{morreyqq}\end{eqnarray}
for $c\equiv c(\datao, \kappa)$; notice that $\kappa\in (0,1)$ can still be chosen, while $\kappa_2$ has been fixed through \rif{final-diff} and depends on $\datao$. Choosing 
$\kappa\equiv \kappa_2p\tilde \beta/(8n)$, and taking $\varrho= (R/8)^{1+\kappa_2/(4n)}$ in \rif{morreyqq}, after a few elementary manipulations we get that 
\eqn{finalmente}
$$
\mean{B_\varrho} |Du-(Du)_{B_\varrho}|^p \, dx \leq c \varrho^{\frac{\kappa_2p\tilde \beta}{16n}}$$
holds for every $\varrho \in (0,1/8)$, provided $B_{8\varrho} \Subset \Omega_0$. 
By the already invoked integral characterisation of H\"older continuity due to Campanato and Meyers, and a standard covering argument, 
\rif{finalmente} implies that $Du \in 
C^{0,\beta_0}_{\rm{loc}}(\Omega)$ for $\beta_0=\kappa_2\tilde\beta/(16n)$. This proves the local H\"older continuity of $Du$ but not yet the full statement of Theorem \ref{main1}, that claims that the H\"older continuity exponent $\beta_0$ of $Du$ depends only on $n,p,q,\ratio, \alpha$ and $\beta$, while we have that the exponent found $\kappa_2\tilde\beta/(16n)$ 
depends on $\datao$. This exponent can be upgraded to reach the required dependence. 
Indeed, once we know that the gradient is locally bounded, 
the non-uniform ellipticity of the functional $\mathcal F$ becomes immaterial and we can apply some  
standard perturbation methods in order to obtain full statement of Theorem \ref{main1}. We briefly summarize the steps. We go back to Section \ref{fasepq} and perform the same procedure 
as for Lemma \ref{pqcomp}; we do this in any case, independently of the 
occurrence of condition \rif{applica}. Let us observe that the functional in \rif{frolast22} satisfies the Bounded Slope Condition (see for instance \cite{brasco}) and therefore there exists a constant $c\equiv c (n,p,q,\ratio, \|Du\|_{L^{\infty}(B_R)})$ such that 
$\|D\tilde h\|_{L^{\infty}(B_R)}\leq c$. Using this fact, the estimation of the terms $II_1, \ldots, II_6$ in Lemma \ref{pqcomp}
simplifies and leads to replace \rif{comp11-pq} by the stronger
\eqn{comp11-pq-fine0}
$$\mean{B_{R}} |V_p(Du)-V_p(D \tilde h )|^2\, dx  \leq cR^{\min\{\beta, \alpha\}} \;, 
$$
where the constant now depends on $n,p,q,\ratio, \|Du\|_{L^{\infty}(\Omega_0)}, \|a\|_{L^{\infty}(\Omega_0)}$. Notice also that we can 
take $\gamma=1$ in Lemma \ref{pqcomp} as now $u$ is locally Lipschitz. Eventually, \rif{comp11-pq-fine0} gives
\eqn{comp11-pq-fine}
$$\mean{B_{R}} |Du-D \tilde h |^2\, dx  \leq cR^{\kappa} \;, 
$$
where $\kappa = \min\{\beta, \alpha\}$ if $p\geq 2$ and $2\kappa = p\min\{\beta, \alpha\}$ otherwise.  Similarly, \rif{Ldecay} now simplifies in 
\eqn{Ldecay-fine}
$$\mean{B_\varrho}  |D\tilde h-(D\tilde h)_{B_\varrho}|^p\, dx \leq c  \left(\frac{\varrho}{R}\right)^{p\tilde \beta} $$
where $\tilde \beta$ depends only on $n,p,q, \ratio$, 
and the constant $c$ depends also on $\|Du\|_{L^{\infty}(\Omega_0)}$ and 
$\|a\|_{L^{\infty}(\Omega_0)}$. The inequalities in \rif{comp11-pq-fine}-\rif{Ldecay-fine} can be now combined in the 
same way \rif{morreyqq} and \rif{finalmente} have been combined above to deduce the gradient H\"older continuity. This time all the exponents involved depend only on 
$n,p,q,\ratio, \alpha, \beta$. This means that the local H\"older continuity of $Du$ follows with an exponent $\beta_0$
with the dependence on the various constants described in the statement and the proof is finally complete. 
 
\section{Interpolative effects and Theorem \ref{main3}}
The proof of Theorem \ref{main3} upgrades the one for Theorem \ref{main1}. 
In the following, the definition in \rif{esseemme} will be replaced by $
s_m:=p+\alpha-q+\gamma(q-p),
$
where $\gamma \in (0,1]$ is the H\"older continuity exponent coming from assumptions \rif{bound3} and not 
any longer the one coming from Theorem \ref{HO}, whose assumptions are in general not implied by  
\rif{bound3}. In this respect, we shall restrict to the case $p\leq n/(1-\gamma)$. This is the range for which \rif{bound3} is a weaker assumption than \rif{bound1}. See also the comments in the Introduction after Theorem \ref{main3}. 
\subsection{Step 1: Blow-up, first time.} We start revisiting the proofs in Section \ref{fasep} in order get Lemma \ref{pcomp3p} in this case, with obviously a different function $R_*$ defined in \trif{raggiostella0}. We notice that since the functional of Theorem \ref{main3} does not depend on the $u$-variable, then there is no reason to go through Lemma \ref{pqcomp-p} and in the following we shall revise the proofs of Section \ref{fasep} with $\tilde v \equiv u$. For this reason from Lemma \ref{auxv} we only retain \rif{aux-sup}. We then go thorough Lemma \ref{pcomp}, where $v$ defined in \rif{defiv}, is actually the blow-up of the original minimizer $u$ in $B_R\equiv B_R(x_0)$:
\eqn{newdefv}
$$v(x) := \frac{ u(x_0+Rx)-(u)_{B_{R/2}}}{ E(R) }\,, \quad  E(R) := \left(\mean{B_{R/2}} |u - \left(u \right)_{B_{R/2}} |^p \, dx \right)^{1/p}\;.
$$
The main point is that we shall use the non-quantitative harmonic approximation of Lemma \ref{p-harm2} instead of the quantitative one in Lemma \ref{p-harm}. This essentially depends on the fact that at this stage we are not yet able to verify the conditions to apply Theorem \ref{HI} to $v$ and get the higher integrability in \rif{nuova-maggiore}, which is essential for Lemma \ref{p-harm}. For the same reason, we can by-pass Step 2 of the proof of Lemma \ref{pcomp}. We therefore go to Step 3 for Lemma \ref{pcomp} and note that 
the computations for in \rif{stimaI2}-\rif{stimaI3} keep on being true in this setting; see also Remark \ref{stupido2}. 
These lead to establish \rif{appi2} also in this case.  
Now, fix $\tilde \eps \in (0,1)$ and determine the corresponding $\sigma$ from Lemma \ref{p-harm2}. We can find 
\eqn{nuovoraggio}
$$R^*\equiv R^*(\datao, \omega(\cdot), M, \tilde \eps)>0$$ such that $c_h  [\texttt{o}(R)] \leq \sigma$ whenever $R \leq R^*$, so that \rif{appi2} gives that
$$
\left|\mean{B_{1/8}} \big\langle \partial \bar F_0 (D v), D\varphi  \big\rangle\, dx \right|\leq \sigma
\|D\varphi\|_{L^{\infty}}$$
 holds for every function $\varphi \in W^{1, \infty}_0(B_{1/8})$. Recall that in the present setting $\bar F_0(\cdot)$ is defined as $z \mapsto [R/E(R)]^{p}F\left(x_B,  [E(R)/R]z\right)$. We are therefore in condition 
 to apply Lemma \ref{p-harm2}. This yields the existence of a function $h \in u+ W^{1,\bar H_0}_0(B_{1/8})$, with $\bar H_0(\cdot)$ as in \rif{ultimaH}, 
 such that \rif{p-har-dopo}
 holds for all $\varphi \in W^{1,\infty}_0(B_{1/8})$ and such that
\eqn{stimaHH2}
$$
\mean{B_{1/8}} \bar H_0(Dh)\, dx \leq c(\datao, M)
$$
is true and \trif{stimadiff-final} holds. These are exactly the ingredients to proceed as in Step 6, thereby getting the validity of Lemma \ref{pcomp}. We can then proceed as in Lemma \ref{pcomp3p} - 
actually in an easier way as $\tilde v \equiv u$. Indeed, \rif{lasty} does not any longer take place and \rif{stimadec0} (recall $\tilde v\equiv u$) directly implies
$$
\int_{B_{\tau R}} \left|\frac{u-(u)_{B_{\tau R}}}{\tau R}\right|^p \, dx  \leq c\left\{\tau^{n} + \tau^{-p}\bar \eps \right\}\int_{B_{R}}
H(x, Du)\, dx\;, 
$$
for every $\bar \eps\in (0,1)$, $\tau \in (0, 1/32)$, where $c\equiv c (\datao, M)$ and provided $R \leq R^*$, with $R^*$ being defined in \rif{nuovoraggio}. 
Arguing exactly as after \rif{stimadec0-dopo} we get the assertion of Lemma \ref{pcomp3p}, with this time $R_*\equiv R^*$. This concludes the analysis of the $p$-phase. It is now not very difficult to see that all the arguments of Sections \ref{fasepq}  
and \ref{morreysec} can be reproduced verbatim, thereby leading to the analog of Theorem \ref{main2} under 
assumptions \rif{bound3}, as announced in the statement of Theorem \ref{main3}. In particular, estimate \rif{morrey} holds and this will be exploited in the next step.

\subsection{Step 2: $C^{0, \gamma}$-regularity in rigid form.} We now consider a ball $B_R \Subset \Omega_0\Subset \Omega$ 
with $R\leq 1$ and fix $\theta \in (0,1)$. In estimate \rif{morrey} we take $\sigma =p-p\theta$. Using Poincar\'e inequality in a standard way we get that 
 \eqn{morrey-dopo}
$$
\mean{B_{\varrho}} |u-(u)_{B_\varrho}|^p\, dx \leq 
c\left[r^{p-p\theta}\mean{B_{r}} H(x,Du) \, dx\right] \varrho^{p\theta}
$$
holds whenever $B_{\varrho}\subset B_r$ are concentric balls that are contained in $B_R$ (but this time not necessarily concentric to $B_R$), and 
where $c\equiv c (\datao, \theta)$. We are now able to apply the standard integral characterization of H\"older continuity due to Campanato and Meyers, that gives, together with a standard covering argument yields 
the following local $C^{0, \theta}$-estimate for $u$: 
\eqn{localap}
$$
R^{\theta}[u]_{0, \theta;B_{R/4}} \leq c \left[R^{p}\mean{B_{3R/8}} H(x,Du) \, dx\right]^{1/p}\;.
$$
The constant $c$ depends on $\datao$. We now further assume that condition \rif{applica-p} holds with $s=0$ 
and for some $M\geq 1$. 
This immediately gives the validity of inequality \rif{cacc1} from Lemma \ref{cacc-gen2}; 
see also Remark \ref{stupido00}. Taking $\theta=\gamma$ in \rif{localap}, and using \rif{cacc1} 
- with $t=3R/8$ and $s=R/2$ - we conclude with
\eqn{stimablow}
$$
R^{\gamma}[u]_{0, \gamma;B_{R/4}} \leq c \left(\mean{B_{R/2}} |u-(u)_{B_{R/2}}|^p \, dx\right)^{1/p}= cE(R)\;, 
$$
where $c\equiv c(\datao, M)$. 
This is the estimate we were looking for. Notice that this estimate, from a qualitative viewpoint, tells no new 
about the regularity of $u$, which is indeed assumed to be $C^{0,\gamma}$-regular. 
The main point here is the specific 
form of the a priori estimate \rif{stimablow}, that will eventually allow to implement a blow-up procedure totally similar to the one of Lemma \ref{pcomp}. 
\subsection{Step 3: Blow-up, second time.} The only missing part for the proof of Theorem \ref{main1} is the one of Section \ref{gradsec}, which 
is in turn based on Lemma \ref{pcomp-dopo}, with the crucial quantitative estimate \rif{comp11dd} from Lemma \ref{pcomp}.  
This, in turn, 
comes from estimate \rif{stimadiff}, which is a direct consequence of the application of Lemma \ref{p-harm}. To apply Lemma \ref{p-harm} in the setting of 
Lemma \ref{pcomp}, we need to verify the higher integrability information \rif{nuova-maggiore}. This was the missing information in Step 1, forcing us to apply the weaker Lemma \ref{p-harm2}. Estimate \rif{nuova-maggiore} comes from 
the application of Theorem \ref{HI} to $v$, that minimizes the rescaled functional in \rif{rescaleF}; this satisfies the new growth assumptions in \rif{nuovaH}-\rif{assFdd}. Under the new assumptions \rif{bound3}, the application of Theorem \ref{HI} requires essentially two ingredients: the $\alpha$-H\"older continuity of the rescaled coefficient $[E(R)/R]^{q-p}a_R(\cdot)$ and the fact that $[v]_{0, \gamma;B}$ is bounded; both information must be uniform with respect to $R$. The former can be checked exactly as in \rif{evident}, see Remark \ref{stupido}. For the latter estimate \rif{stimablow} becomes crucial. Indeed, we have 
$$
[v]_{0, \gamma;B_{1/4}} \stackrel{\rif{newdefv}}{=} \frac{R^{\gamma}[u]_{0, \gamma;B_{R/4}}}{E(R)}
 \stackleq{stimablow} c(\datao, M)\;.
$$
Summarizing, we have that $v$ is a local minimizer of the functional in \rif{rescaleF} to which we can apply Theorem 
\ref{HI}, with all the constants involved being independent of $R$.  
We get \rif{nuova-maggiore} and \rif{appi1}, with $\delta_1$ depending only on $\datao$ and $M$. This - see also Remark \ref{stupido2} - eventually allows to get \rif{p-har-dopo}-\rif{stimadiff} with constants $c$ being completely independent of $R$.  
This is sufficient to reprove Lemma \ref{pcomp-dopo} with the same dependence of the constants of the original statement, 
and in particular allows to get the comparison estimate \rif{comp11dd}. These are the essential ingredients to run the 
proof in Section \ref{gradsec} and eventually leads to the assertion of Theorem \ref{main3}. 
The rest of the proof now goes exactly as in Lemma \ref{pcomp} - see also Remark \ref{stupido2} - and in the case of Theorem \ref{main1}.

\section{Lavrentiev phenomenon and Theorem \ref{main4}}
In this section we prove Theorem \ref{main4}. The proof is unified for the two cases \rif{bound2} and \rif{bound33}. In the first one we shall formally take the parameter $\gamma$ considered in \rif{bound33} as $\gamma=0$. We start fixing a ball $B\Subset \Omega$ as in the statement of Theorem \ref{main4}. We notice that we can reduce ourselves to prove that there exists a sequence$\{u_k\} $ of $W^{1,\infty}(B)$-regular functions such that $u_k \to u$ strongly in 
 $W^{1,p}(B)$ and such that 
 \eqn{convergenza2}
 $$
 \lim_{k} \, \int_{B} H(x, Du_k)\, dx  =\int_{B} H(x, Du)\, dx\;.
 $$
In fact, once \rif{convergenza2} is at hand, then \rif{convergenza1} follows, up to a not relabelled subsequence, by a standard variant of Lebesgue's dominated convergence theorem since \rif{growtH} is in force. For $\eps \in (0, 1/2)$ sufficiently small to have $\eps \leq {\rm  dist}(B, \partial \Omega)/20$, we consider the mollified functions  
$u_\eps :=u*\rho_\eps $. Here $\{\rho_\eps\}$ is a family of standard mollifiers generated by a smooth non-negative and radial function 
$\rho \in C^\infty_0(B_1)$ such that $\|\rho\|_{L^1(\ern)}=1$, via 
$\rho_{\eps}(x) = \eps^{-n} \rho(x/\eps)$. Next, following the notation introduced in Section \ref{lefasi}, for every $x \in B$ we consider
\eqn{defih0}
$$
\ai(B_{2\eps}(x)):=\min_{y \in \overline{B_{2\eps}(x)}}a(y) \quad \mbox{and}\quad H_\eps(x,z):= |z|^p+ a_{{\rm i}}(B_{2\eps}(x))|z|^q\;.$$
We now again distinguish two different phases as done in Section \ref{lefasi}. 

{\em $p$-phase.} Here we assume that
\eqn{pfase} 
$$
a_{{\rm i}}(B_{2\eps}(x)) \leq  2[a]_{0, \alpha}\eps^\alpha\;.
$$
As seen in Remark \ref{propaga}, this also implies that 
\eqn{supfi}
$$
\|a\|_{L^{\infty}(B_{2\eps}(x))} \leq 6[a]_{0, \alpha}\eps^\alpha\;.
$$
Moreover, in the case $q \leq p+\alpha$ and we are considering \rif{bound2}, the Caccioppoli inequality \rif{cacc1} from Lemma \ref{cacc-gen2} applies by \rif{pfase} and gives, in particular
\eqn{cacc-lav}
$$
\mean{B_{\eps}(x)} |Du|^p\, dy \leq c\mean{B_{2\eps}(x)}\left|\frac{u-(u)_{B_{2\eps}(x)}}{\eps}\right|^p\, dy\;.
$$
This inequality continues to hold when assuming \rif{bound33}. Indeed, in any case, by \rif{cacc0} we have
$$
\mean{B_{\eps}(x)} |Du|^p\, dy \leq c\mean{B_{2\eps}(x)}\left[\left|\frac{u-(u)_{B_{2\eps}(x)}}{\eps}\right|^p + a(y)\left|\frac{u-(u)_{B_{2\eps}(x)}}{\eps}\right|^q\right]\, dy\;. 
$$
In turn we estimate, for $y \in B_{\eps}(x)$ 
\begin{eqnarray*}
a(y)\left|\frac{u-(u)_{B_{2\eps}(x)}}{\eps}\right|^q &\stackleq{supfi}& c \left[\osc_{B_{2\eps}(x)}\, u\right]^{q-p}\eps^{p-q+\alpha}
\left|\frac{u-(u)_{B_{2\eps}(x)}}{\eps}\right|^p\\
&\stackleq{bound33}& c\eps^{(p-q)(1-\gamma)+\alpha}
\left|\frac{u-(u)_{B_{2\eps}(x)}}{\eps}\right|^p\\
&\stackleq{bound33}& c
\left|\frac{u-(u)_{B_{2\eps}(x)}}{\eps}\right|^p\;, 
\end{eqnarray*}
with $c$ being independent of $\eps$. 
Combining the last two inequalities gives \rif{cacc-lav} also in the case. In turn, by using the very definition of convolution, we can estimate 
\eqn{stimamorrey}
$$
|Du_\eps(x)| \leq  c\mean{B_{\eps}(x)} |Du| \, dy \leq c \left(\displaystyle{\mean{B_{\eps}(x)} |Du|^p} \, dy\right)^{1/p}
\stackleq{cacc-lav} \frac{c}{\eps^{1-\gamma}}\;.$$
In the last line we have used the fact that $u \in C^{0, \gamma}(\Omega)$ by assumption \rif{bound33} (when this is considered) and we have incorporated the case $u \in L^{\infty}$ and \rif{bound2} in the occurrence $\gamma=0$. Recalling the definition in \rif{defih0}, we continue to estimate 
\begin{eqnarray}
\nonumber
H\left(x, Du_\eps(x)\right) &\leq &[a(x)-a_{{\rm i}}(B_{2\eps}(x))]|Du_\eps(x)|^{q} + H_\eps(x,Du_\eps(x))\\
&\leq &c[a]_{0,\alpha}\eps^{\alpha}|Du_\eps(x)|^{q-p}|Du_\eps(x)|^p + H_\eps(x,Du_\eps(x))\notag \\
&\stackleq{stimamorrey} &c\eps^{\alpha+(q-p)(\gamma-1)}|Du_\eps(x)|^p + H_\eps(x,Du_\eps(x))\notag \\
&\stackleq{bound33}&  c|Du_\eps(x)|^p + H_\eps\left(x,Du_\eps(x) \right)\,.\notag
\end{eqnarray}
We therefore conclude that
\eqn{lastimaH}
$$
H\left(x, Du_\eps(x)\right) \leq  cH_\eps\left(x,Du_\eps(x) \right)\,, 
$$
where $c$ is independent of $\eps$. 
Finally, Jensen's inequality and \rif{defih0} yield
\begin{eqnarray*}
\nonumber H_\eps(x,Du_\eps(x)) &\leq &  \int_{B_{\eps}(x)}  H_\eps\left(x, Du(y)\right) \rho_\eps(x-y)\, dy\\
&\leq& \int_{B_{\eps}(x)} H(y,Du(y)) \rho_\eps(x-y)\, dy=[H(\cdot,Du(\cdot))*\rho_\eps](x)\,.
\end{eqnarray*}
The inequalities in the last two displays give that 
\eqn{comb}
$$
H\left(x, Du_\eps(x)\right) \leq  c[H(\cdot,Du(\cdot))*\rho_\eps](x)
$$
holds for every $x \in B$ and for a constant $c\equiv c (\data)$ which is independent of $\eps$.  

{\em $(p,q)$-phase.} This is when $a_{{\rm i}}(B_{2\eps}(x)) >  2[a]_{0, \alpha}\eps^\alpha$. By \rif{ult} we have that 
$
\|a(x)\|_{L^\infty(B_{2\eps}(x))} \leq 3 a_{{\rm i}}(B_{2\eps}(x)). 
$
This means that $H(y,z)\leq 3H_\eps(x,z)$ for every $y \in B_{2\eps}(x)$ and $z \in \er^n$, and in particular that $H(x,z)\leq 3H_\eps(x,z)$. We conclude again with \rif{lastimaH} and, proceeding as in the subsequent displays, we again arrive at \rif{comb}, which is established in every case/phase. 

With \rif{comb} at our disposal we are now ready to conclude the proof of \rif{convergenza1} and therefore of the whole Theorem \ref{main4}. We let $u_k := u_{\eps_{k}}$, where $\{\eps_{k}\} \subset (0,1/2)$ is a sequence such that $ \eps_k \to 0$ and $\eps_{k} \leq {\rm dist}(B, \partial \Omega)/20$ for every $k\in \en$. Obviously 
$u_k \in W^{1,\infty}(B)$ and $u_k \to u$ in $W^{1,p}(B)$ since $u \in W^{1,p}(B)$. Now, up to passing to non-relabelled subsequences, we may assume that 
$H\left(x, Du_k(x)\right)\to H\left(x, Du(x)\right)$ a.e. in $B$. 
Moreover, since $H(\cdot,Du(\cdot)) \in L^1(B)$, we have that $[H(\cdot,Du(\cdot))*\rho_{\eps_{k}}](x) \to H\left(x, Du(x)\right) $ in $L^1(B).$ Therefore, by \rif{comb} we can use a well-known variant of Lebesgue's dominated convergence theorem that implies that 
$H\left(x, Du_k(x)\right)\to H\left(x, Du(x)\right)$ in $L^1(B)$ that, in turn, gives \rif{convergenza1}. The proof is complete.

\end{document}